\newtheorem{theorem}{Theorem}[section]
\newtheorem{lemma}[theorem]{Lemma}
\newtheorem{corollary}[theorem]{Corollary}
\numberwithin{equation}{section}
\newcommand{\mc}[1]{{\mathcal #1}}
\newcommand{\mb}[1]{{\mathbf #1}}
\newcommand{\bb}[1]{{\mathbb #1}}
\newcommand{\<}{\langle}
\renewcommand{\>}{\rangle}
\newcommand{\asy}{\mathrm{a}}
\renewcommand{\epsilon}{\varepsilon}
\newcommand{\upbar}[1]{\,\overline{\! #1}}
\definecolor{light}{gray}{.9}
\begin{document}

\title[Weakly asymmetric exclusion process] {Strong
  asymmetric limit of the quasi-potential of the boundary driven 
  weakly asymmetric exclusion process}

\author [L. Bertini] {Lorenzo Bertini}
\address{\noindent Lorenzo Bertini \hfill\break\indent 
Dipartimento di Matematica, Universit\`a di Roma `La Sapienza' 
\hfill\break\indent 
P.le Aldo Moro 2, 00185 Roma, Italy}
\email{bertini@mat.uniroma1.it}

\author[D. Gabrielli]{Davide Gabrielli}
\address{Davide Gabrielli  \hfill\break\indent 
Dipartimento di Matematica, Universit\`a dell'Aquila  \hfill\break\indent 
67100 Coppito, L'Aquila, Italy}
\email{gabriell@univaq.it}

\author[C. Landim]{Claudio Landim}
\address{Claudio Landim \hfill\break\indent 
IMPA \hfill\break\indent 
Estrada Dona Castorina 110, \hfill\break\indent
J. Botanico, 22460 Rio
de Janeiro, Brazil\hfill\break\indent 
{\normalfont and} \hfill\break\indent 
CNRS UMR 6085, Universit\'e de Rouen, \hfill\break\indent 
Avenue de l'Universit\'e, BP.12,
Technop\^ole du Madril\-let, \hfill\break\indent
F76801 Saint-\'Etienne-du-Rouvray, France.} 
\email{landim@impa.br}

%\thanks{}

\noindent\keywords{Exclusion process, Large deviations, Stationary
  nonequilibrium states} 

\subjclass[2000]{Primary 82C22; Secondary 60F10, 82C35}

\begin{abstract}
  We consider the weakly asymmetric exclusion process on a bound\-ed
  interval with particles reservoirs at the endpoints. The
  hydrodynamic limit for the empirical density, obtained in the
  diffusive scaling, is given by the viscous Burgers equation with
  Dirichlet boundary conditions. In the case in which the bulk
  asymmetry is in the same direction as the drift due to the boundary
  reservoirs, we prove that the quasi-potential can be expressed in
  terms of the solution to a one-dimensional boundary value problem
  which has been introduced by Enaud and Derrida \cite{de}. We
  consider the strong asymmetric limit of the quasi-potential and
  recover the functional derived by Derrida, Lebowitz, and Speer
  \cite{DLS3} for the asymmetric exclusion process.
\end{abstract}

\maketitle
\thispagestyle{empty}

\section{Introduction}
\label{sec1}

The study of steady states of non-equilibrium systems has motivated a
lot of works over the last decades. It is now well established that
the steady states of non-equilibrium systems exhibit in general
long-range correlations and that the thermodynamic functionals, such
as the free energy, are not local nor additive.

The analysis of the large deviations asymptotics of stochastic
lattice gases with particle reservoirs at the boundary has proven
itself to be an important step in the physical description of
\emph{nonequilibrium stationary states} and a rich source of
mathematical problems. We refer to \cite{BDGJL7,Der} for two recent
reviews on this topic.

We consider a boundary driven one-dimensional lattice gas whose
dynamics can be informally described as follows.  Fix an integer $N\ge
1$, an external force $E$ in $\bb R$ and boundary densities $0<\rho_-
<\rho_+<1$. At any given time each site of the interval $\{-N+1,
\dots, N-1\}$ is either empty or occupied by one particle.  In the
bulk, each particle attempts to jump to the right at rate $ 1+ E/2N$
and to the left at rate $1 - E/2N$. To respect the exclusion rule, the
particle jumps only if the target site is empty, otherwise nothing
happens. At the boundary sites $\pm (N-1)$ particles are created and
removed for the local density to be $\rho_\pm$: at rate $\rho_\pm$ a
particle is created at $\pm (N-1)$ if the site is empty and at rate
$1-\rho_\pm$ the particle at $\pm (N-1)$ is removed if the site is
occupied.

The dynamics just described defines an irreducible Markov process on a
finite state space which has a unique stationary state denoted by
$\mu^N_E$. Let $\varphi_\pm := \log [ \rho_\pm/ (1-\rho_\pm)]$ be the
chemical potential of the boundary reservoirs and set
$E_0:=(\varphi_+-\varphi_-)/2$. When $E=E_0$, the drift caused by the
external field $E$ matches the drift due to the boundary reservoirs,
and the process becomes reversible.

In the limit $N\uparrow\infty$, the typical density profile
$\upbar{\rho}_E$ under the stationary state $\mu^N_E$ can be described
as follows. For each $E\le E_0$ there exists a unique $J_E\le 0$ such
that
\begin{equation*}
\frac 12 \int_{\rho_-}^{\rho_+}\! dr \: \frac{1}{E\chi(r)-J_E} = 1\;,
\end{equation*}
where $\chi$ is the mobility of the system: $\chi(a) = a(1-a)$. The
profile $\upbar{\rho}_E$ is then obtained by solving
\begin{equation*}
\upbar{\rho}'_E - E \, \chi( \upbar{\rho}_E ) = - J_E
\end{equation*}
with the boundary condition $\upbar{\rho}_E(-1)=\rho_-$.  

In the same limit $N\uparrow\infty$, the probability of observing a
density profile $\gamma$ different from $\upbar{\rho}_E$ can be
expressed as
\begin{equation}
\label{aa1}
\mu^N_E \{\gamma\} \;\sim\; \exp\{ -N V_E (\gamma)\}\;.
\end{equation}
The large deviations functional $V_E$, which also depends on $\rho_-$,
$\rho_+$, is an extension of the notion of free energy to the context
of non-equilibrium systems. 

The free energy of a boundary driven lattice gas has first been 
derived for the symmetric simple exclusion process by Derrida,
Lebowitz and Speer \cite{DLS3} based on the so called matrix method,
introduced by Derrida, which permits to express the stationary state
$\mu^N_E$ as a product of matrices.  Bertini et al.  \cite{bdgjl2}
derived the same result through a dynamical approach which we extend
here to the weakly asymmetric case.

We consider only the situation $E<E_0$ for the bulk asymmetry to be in
the same direction as the drift due to the boundary. The reversible
case $E=E_0$ lacks interest because the stationary state is product
and does not exhibit long range correlations. In contrast, the
analysis of the quasi-potential $V_E$ for $E>E_0$, not treated here,
appears a most interesting problem. For instance, a representation of
$V_E$ as a supremum of trial functionals analogous to \eqref{ssg} below
seems to be ruled out.

In the boundary driven weakly asymmetric exclusion process, for $E<
E_0$, the quasi-potential takes the following form:
\begin{eqnarray}
\label{aa2}
V_E(\gamma)  &:=&
\int_{-1}^1\!du \: \Big\{ \gamma\log\gamma + (1-\gamma)\log(1-\gamma)
+(1-\gamma)\varphi -\log\big(1+e^{\varphi}\big) 
\nonumber
\\ && \qquad \phantom{\int_{-1}^1\!du \: \Big\{}
+ \; \frac 1E \Big[ \varphi' \log \varphi' 
- (\varphi' - E) \log (\varphi'  - E) \Big] - A_E 
\Big\}\;,\qquad\quad
\end{eqnarray}
where $A_E$ is the constant given by
\begin{equation*}
A_E\; :=\; \log(-J_E) \;+\; \frac 12 \int_{\gamma_-}^{\gamma_+} \!dr \, 
\frac 1{E \chi(r)}
\log \Big[ 1 -  \frac {E \chi(r)}{J_E} \Big] \;;
\end{equation*}
and where $\varphi$ is the unique solution of the Euler-Lagrange
equation
\begin{equation*}
\frac{\varphi''}{\varphi' (\varphi' -E)}
\;+\; \frac 1{1+ e^{\varphi}} \;=\; \gamma 
\end{equation*}
satisfying $\varphi(\pm 1) =\varphi_\pm$, $\varphi' > \max\{0, E\}$.

This result, stated in a different form, has been proved by Enaud and
Derrida \cite{de} based on the matrix method. We prove this result in
Section \ref{s:6} below by the dynamical approach introduced in
\cite{bdgjl2, bdgjl4}. We also show that the quasi-potential is convex
and lower semi-continuous.

In section \ref{s:asyl}, we show that $V_E$ $\Gamma$-converges, as
$E\downarrow -\infty$, to the free energy of the boundary driven
asymmetric exclusion process, first derived by Derrida, Lebowitz and
Speer \cite{DLS3}. This asymptotic behavior is somewhat surprising
since the hydrodynamic time scales at which the weakly asymmetric
exclusion process and the asymmetric exclusion process evolve are
different. We also prove convergence of the solutions of the
Euler-Lagrange equations as the external force $E$ diverges.

The dynamical approach followed here permits to compute the
fluctuation probabilities \eqref{aa1} in great generality, in any
dimension and for a large class of processes. However, it is only in
dimension one and for very few interacting particle systems that an
explicit expression of type \eqref{aa2} is available for the
non-equilibrium free energy $V_E$.

\section{Notation and Results}
\label{sec2}

\subsection*{ The boundary driven  weakly asymmetric exclusion process}

Fix an integer $N\geq 1$, $E \in \bb R$, $0<\rho_- \leq \rho_+ <
1$ and let $\Lambda_N :=\{-N+1, \dots , N-1\}$. The configuration space
is $\Sigma_N:=\{0,1\}^{\Lambda_N}$; elements of $\Sigma_N$ are denoted
by $\eta$ so that $\eta(x)=1$, resp.\ $0$, if site $x$ is occupied,
resp.\ empty, for  the configuration $\eta$.
We denote by $\sigma^{x,y}\eta$ the configuration obtained from $\eta$ by
exchanging the occupation variables $\eta (x)$ and $\eta (y)$, i.e.\ 
\begin{equation*}
(\sigma^{x,y} \eta) (z) := 
  \begin{cases}
        \eta (y) & \textrm{ if \ } z=x\\
        \eta (x) & \textrm{ if \ } z=y\\
        \eta (z) & \textrm{ if \ } z\neq x,y,
  \end{cases}
\end{equation*}
and by $\sigma^{x}\eta$ the configuration obtained from
$\eta$ by flipping the configuration at $x$, i.e.\
\begin{equation*}
(\sigma^{x} \eta) (z) := 
  \begin{cases}
        1-\eta (x) & \textrm{ if \ } z=x\\
        \eta (z) & \textrm{ if \ } z\neq x.
  \end{cases}
\end{equation*}

The one-dimensional boundary driven weakly asymmetric exclusion
process is the Markov process on $\Sigma_N$ whose generator $L_N$ 
can be decomposed as
\begin{equation}
\label{f03}
L_N \;=\; L_{0,N} \;+\; L_{-,N} \;+\; L_{+,N} \;,
\end{equation}
where the generators $L_{0,N}$, $L_{-,N}$, $L_{+,N}$ act on functions
$f:\Sigma_N\to \bb R$ as
\begin{eqnarray*}
&& (L_{0,N} f)(\eta) \;=\; \frac{N^2}2 \sum_{x=-N+1}^{N-2} 
e^{- E/(2N) \, [ \eta(x+1)-\eta(x)]}  
\big[ f(\sigma^{x,x+1} \eta)-f(\eta)\big] \;, \\
&& \quad
(L_{-,N} f)(\eta) \;=\; \frac{N^2}2 \: 
c_- \big( \eta(-N+1) \big) \big[ f(\sigma^{-N+1} \eta)-f(\eta)\big] 
\\
&&  \qquad (L_{+,N} f)(\eta) \;=\; \frac{N^2}2 \: 
 c_+ \big( \eta(N-1) \big) \big[ f(\sigma^{N-1} \eta)-f(\eta)\big] 
\end{eqnarray*}
where $c_\pm: \{0,1\}\to \bb R$ are given by
\begin{equation*}
  c_\pm (\zeta) \;:=\;
\rho_\pm e^{\mp E/(2N)}(1-\zeta) + (1-\rho_\pm) e^{ \pm E/(2N)}
\zeta\;. 
\end{equation*}
Notice that the (weak) external field is $E/(2N)$ and, in view of the
diffusive scaling limit, the generator has been speeded up by $N^2$.
We denote by $\eta_t$ the Markov process on $\Sigma_N$ with generator
$L_N$ and by $\bb P^N_\eta$ its distribution if the initial
configuration is $\eta$.  Note that $\bb P^N_\eta$ is a probability
measure on the path space $D(\bb R_+, \Sigma_N)$, which we consider
endowed with the Skorohod topology and the corresponding Borel
$\sigma$-algebra. Expectation with respect to $\bb P^N_\eta$ is
denoted by $\bb E^N_\eta$.

Since the Markov process $\eta_t$ is irreducible, for each $N\ge 1$,
$E\in\bb R$, and $0<\rho_-\le \rho_+<1$ there exists a unique invariant
measure $\mu^N_E$ in which we drop the dependence on $\rho_\pm$ from
the notation.  Let $\varphi_\pm := \log [ \rho_\pm/ (1-\rho_\pm)]$ be
the chemical potential of the boundary reservoirs and set
$E_0:=(\varphi_+-\varphi_-)/2$. A simple computation shows that if
$E=E_0$ then the process $\eta_t$ is reversible with respect to the
product measure
\begin{equation}
\label{imr}
  \mu^N_{E_0}(\eta) = \prod_{x=-N+1}^{N-1} 
\frac{ e^{\upbar{\varphi}^N_{E_0}
  (x) \, \eta(x)}}{1+e^{\upbar{\varphi}_{E_0}^N(x)}}
\end{equation}
where 
\begin{equation*}
 \upbar{\varphi}^N_{E_0}(x) := \varphi_- \frac{ N-x}{2N} 
 + \varphi_+ \frac{ N+x}{2N}
\end{equation*}
On the other hand, for $E \neq E_0$ the invariant measure $\mu^N_E$
cannot be written in a simple form. 

\subsection*{The dynamical large deviation principle}

We denote by $u\in [-1,1]$ the macroscopic space coordinate and by 
$\langle \cdot,\cdot \rangle$ the inner product in
$L_2\big([-1,1],du\big)$. We set
\begin{equation}
\label{dcm}
\mc M := \left\{ \rho \in L_\infty \big([-1,1],du\big) \,:\:
0\leq \rho \leq 1 \right\}
\end{equation}
which we equip with the topology induced by the weak convergence of
measures, namely a sequence $\{\rho^n\} \subset \mc M$ converges to
$\rho$ in $\mc M$ if and only if $\langle \rho^n, G \rangle \to
\langle \rho, G \rangle$ for any continuous function $G: [-1,1]\to\bb
R$.  Note that $\mc M$ is a compact Polish space that we consider
endowed with the corresponding Borel $\sigma$-algebra.
The empirical density of the configuration $\eta\in\Sigma_N$ is
defined as $\pi^N(\eta)$ where the map $\pi^N \colon \Sigma_N \to \mc
M$ is given by  
\begin{equation}
\label{eq:2}
\pi^N (\eta) \, (u) \;:=\; 
\sum_{x=-N+1}^{N-1} \eta (x) \,
\mb 1 \{ \big[ \frac{x}{N}- \frac{1}{2N}, 
\frac{x}{N}+ \frac{1}{2N}\big)\} (u) \; , 
\end{equation}
in which $\mb 1\{A\}$ stands for the indicator function of the set
$A$.  Let $\{\eta^N\}$ be a sequence of configurations with
$\eta^N\in\Sigma_N$. If the sequence $\{\pi^N(\eta^N)\}\subset\mc M$
converges to $\rho$ in $\mc M$ as $N\to\infty$, we say that
$\{\eta^N\}$ is \emph{associated} the macroscopic density profile
$\rho\in\mc M$.

Given $T>0$, we denote by $D\big([0,T];\mc M\big)$ the Skorohod space
of paths from $[0,T]$ to $\mc M$ equipped with its Borel
$\sigma$-algebra. Elements of $D\big([0,T], \mc M\big)$ will be
denoted by $\pi\equiv\pi_t(u)$ and sometimes by $\pi(t,u)$.  Note that
the evaluation map $D\big([0,T];\mc M\big) \ni \pi \mapsto \pi_t \in
\mc M$ is not continuous for $t\in (0,T)$ but is continuous for
$t=0,T$.  We denote by $\pi^N$ also the map from
$D\big([0,T];\Sigma_N\big)$ to $D\big([0,T];\mc M\big)$ defined by
$\pi^N(\eta_\cdot)_t := \pi^N(\eta_t)$. The notation $\pi^N(t,u)$ is
also used.

Fix a profile $\gamma\in \mc M$ and consider a sequence $\{\eta^N :
N\ge 1\}$ associated to $\gamma$. Let $\eta_t^N$ be the boundary
driven weakly asymmetric exclusion process starting from $\eta^N$.  In
\cite{dps,g,kov} it is proven that as $N\to\infty$ the sequence of
random variables $\{\pi^N(\eta^N_\cdot)\}$, which take values in $\mc
D\big([0,T];\mc M\big)$, converges in probability to the path
$\rho\equiv\rho_t(u)$, $(t,u)\in [0,T] \times[-1,1]$ which solves the
viscous Burgers equation with Dirichlet boundary conditions at $\pm
1$, i.e.\ 
\begin{equation} 
\label{eq:1}
\begin{cases}
{\displaystyle
\partial_t \rho 
+ \frac E2 \,  \nabla \chi(\rho)
= \frac 12  \, \Delta \rho } \\
{\displaystyle
\vphantom{\Big\{}
\rho_t(\pm 1) = \rho_\pm
} 
\\
{\displaystyle
\rho_0(u) = \gamma(u) 
}
\end{cases}
\end{equation}
where $\chi:[0,1]\to\bb R_+$ is the mobility of the system, $\chi(a) =
a(1-a)$, and $\nabla$, resp.\ $\Delta$, denotes the derivative, resp.\ 
the second derivative, with respect to $u$.  In fact the proof
presented in \cite{dps,g} is in real line, while the one in \cite{kov}
is on the torus. The arguments however can be adapted to the boundary
driven case, see \cite{els1,els2,klo2} for the hydrodynamic limit of
different boundary driven models.

A large deviation principle for the empirical density can also be
proven following \cite{kov, q, qrv}, adapted to the open boundary
context in \cite{bdgjl4}. In order to state this result some more
notation is required.  Fix $T>0$ and let $\Omega_T = (0,T)\times
(-1,1)$, $\overline{\Omega_T} = [0,T]\times [-1,1]$. For positive
integers $m,n$, we denote by $C^{m,n}(\overline{\Omega_T})$ the space
of functions $G\equiv G_t(u)\colon \overline{\Omega_T}\to\bb R$ with
$m$ derivatives in time, $n$ derivatives in space which are continuous
up the the boundary. We improperly denote by $C^{m,n}_0(\overline
{\Omega_T})$ the subset of $C^{m,n}(\overline {\Omega_T})$ of the
functions which vanish at the endpoints of $[-1,1]$, i.e.\ $G\in
C^{m,n}(\overline{\Omega_T})$ belongs to
$C^{m,n}_0(\overline{\Omega_T})$ if and only if $G_t(\pm 1)=0$,
$t\in[0,T]$.

Let the energy $\mc Q: D([0,T], \mc M) \to [0,\infty]$ be given by
\begin{eqnarray*}
\!\!\!\!\!\!\!\!\!\!\!\!\! &&
\mc Q(\pi) \;=\;  \\
\!\!\!\!\!\!\!\!\!\!\!\!\! && \quad
\sup_{H} \Big\{ \int_0^Tdt \int_{-1}^1 du\,  \pi(t,u) \,
(\nabla H)(t,u) \;-\; \frac 12  \int_0^Tdt \int_{-1}^1 du\,
H(t,u)^2 \, \chi (\pi(t,u)) \Big\}\;, 
\end{eqnarray*}
where the supremum is carried over all smooth functions $H: \Omega_T
\to\bb R$ with compact support. If $\mc Q(\pi)$ is finite, $\pi$ has a
generalized space derivative, $\nabla \pi$, and
\begin{equation*}
\mc Q(\pi) \;=\; \frac 12 \int_0^T dt\, \int_{-1}^1 du\,
\frac{(\nabla \pi_t)^2}{\chi (\pi_t)}\;\cdot 
\end{equation*}

Fix a function $\gamma\in\mc M$ which corresponds to the initial
profile.  For each $H$ in $C^{1,2}_0(\overline {\Omega_T})$, let $\hat
J_{H} = \hat J_{T, H, \gamma} \colon D([0,T], \mc M)\longrightarrow
\bb R$ be the functional given by
\begin{eqnarray*}
\hat J_H(\pi) &:=& \big\langle \pi_T, H_T \big\rangle 
- \langle \gamma, H_0 \rangle
- \int_0^{T} \!dt\, \big\langle \pi_t, \partial_t H_t \big\rangle
\\
&-& \frac 12 \int_0^{T} \!dt\, \big\langle \pi_t , \Delta H_t \big\rangle
\;+\; \frac {\rho_+}2   \int_0^{T} \! dt\, \nabla H_t(1) 
\; -\;  \frac {\rho_-}2  \int_0^{T} \!dt\, \nabla H_t(-1) \\
&-& \frac E2 \int_0^{T} \!dt\,  
\big\langle \chi(\pi_t), \nabla H_t \big\rangle 
\;-\; \frac 12 \int_0^{T} \!dt\,
\big\langle \chi( \pi_t ), \big( \nabla H_t \big)^2 \big\rangle \; .
\end{eqnarray*}
Let $\hat I_T (\, \cdot \, | \gamma) \colon D([0,T],\mc
M)\longrightarrow [0,+\infty]$ be the functional defined by
\begin{equation}
\label{f01}
\hat I_T (\pi | \gamma) \; :=\; \sup_{H\in
  C^{1,2}_0(\overline {\Omega_T})} \hat J_H(\pi)\; .
\end{equation}
The rate functional $I_T(\cdot | \gamma): D([0,T], \mc M) \to
[0,\infty]$ is given by
\begin{equation}
\label{f02}
I_T(\pi | \gamma) \;=\; 
\left\{
\begin{array}{ll}
\hat I_T (\pi | \gamma) & \text{if $\mc Q(\pi) < \infty$ \;,} \\
\infty & \text{otherwise.}
\end{array}
\right.
\end{equation}
Is is proved in \cite{blm1}, for any $E$ in $\bb R$, that the
functional $I_T (\cdot | \gamma)$ is lower semicontinous, has compact
level sets and that a large deviations principle for the empirical
measure holds.

\begin{theorem}
\label{s02}
Fix $T>0$ and an initial profile $\gamma$ in $\mc M$.  Consider a
sequence $\{\eta^N : N\ge 1\}$ of configurations associated to
$\gamma$.  Then, the sequence of probability measures $\{\bb
P^N_{\eta^N} \circ (\pi^N)^{-1} : N\ge 1\}$ on $D([0,T];\mc M)$
satisfies a large deviation principle with speed $N$ and good rate
function $I_T(\cdot|\gamma)$.  Namely, $I_T(\cdot|\gamma):
D\big([0,T]; \mc M\big) \to [0,\infty]$ has compact level sets and for
each closed set $\mc C \subset D([0,T]; \mc M)$ and each open set $\mc
O \subset D([0,T]; \mc M)$
\begin{eqnarray*}
&& 
\varlimsup_{N\to\infty} \frac 1N \log \bb P^N_{\eta^N} 
\big( \pi^N \in \mc C\big)
\;\leq\; - \inf_{\pi \in \mc C} I_T (\pi | \gamma)  
\\
&& \qquad 
\varliminf_{N\to\infty} \frac 1N \log \bb P^N_{\eta^N} 
\big( \pi^N\in \mc O \big) \;\geq\; -  \inf_{\pi \in \mc O} I_T (\pi |
\gamma) \; . 
\end{eqnarray*}
\end{theorem}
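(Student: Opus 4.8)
The plan is to follow the now-classical route of Kipnis, Olla and Varadhan, adapted to the open boundary setting, proving the upper and lower bounds separately and joining them with an exponential tightness estimate. First I would set up the upper bound through exponential martingales. For each $H\in C^{1,2}_0(\overline{\Omega_T})$ introduce the mean-one Feynman--Kac martingale
\[
M^{N,H}_t \;=\; \exp\Big\{ N\langle \pi^N_t, H_t\rangle - N\langle \pi^N_0, H_0\rangle - \int_0^t e^{-N\langle \pi^N_s, H_s\rangle}\,(\partial_s + L_N)\, e^{N\langle \pi^N_s, H_s\rangle}\, ds\Big\}\;.
\]
A Taylor expansion of the normalized exponential generator $\frac 1N e^{-N\langle \pi^N_s, H_s\rangle} L_N e^{N\langle \pi^N_s, H_s\rangle}$ in powers of $1/N$, combined with a summation by parts that produces the reservoir contributions at $\pm 1$ (the boundary pieces $L_{\pm,N}$ generating the terms $\tfrac{\rho_\pm}{2}\int_0^T \nabla H_t(\pm 1)\,dt$), shows that the exponent equals $N\hat J_H(\pi^N)$ up to errors negligible on the scale $N$, uniformly in the configuration. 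Changing measure with $M^{N,H}_T$ and applying Chebyshev then gives, for every compact $\mc K\subset D([0,T];\mc M)$, the bound $\varlimsup_N \frac 1N \log \bb P^N_{\eta^N}(\pi^N\in\mc K)\le -\inf_{\pi\in\mc K}\hat J_H(\pi)$; optimizing over a countable dense family of $H$ and invoking the minimax lemma (valid on compacts) replaces $\hat J_H$ by $\hat I_T$.

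To upgrade the compact-set bound to all closed sets I would prove exponential tightness: estimating the oscillation of $t\mapsto\langle\pi^N_t,G\rangle$ for smooth $G$ by means of the same martingales yields an exponential control of the Skorohod modulus of continuity, while compactness of $\mc M$ handles the spatial variable. In parallel comes the energy estimate, showing that any $\pi$ with $\hat I_T(\pi|\gamma)<\infty$ satisfies $\mc Q(\pi)<\infty$: testing $\hat J_H$ against functions $H$ that approximate $\nabla\pi/\chi(\pi)$ and bounding the resulting quadratic form forces a generalized gradient to exist. Consequently $\hat I_T$ and $I_T$ coincide wherever the upper-bound rate is finite, which identifies the upper-bound functional with $I_T(\cdot|\gamma)$.

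For the lower bound the key object is the perturbed process. Given a regular path $\pi$ that is bounded away from $0$ and $1$, meets the Dirichlet data $\pi_t(\pm1)=\rho_\pm$, and is a classical solution of the weakly asymmetric hydrodynamic equation with an added external field $H$, a law of large numbers under the tilted measure $\bb Q^{N}_{H}$ built from $M^{N,H}_T$ shows that $\pi$ is the typical trajectory and that the relative-entropy cost per unit $N$ converges to $\hat J_H(\pi)=I_T(\pi|\gamma)$, giving the lower bound at such paths. The hard part will be the $I_T$-density argument: one must approximate an arbitrary $\pi$ with $I_T(\pi|\gamma)<\infty$ by regular paths $\pi^k$ of this type with $I_T(\pi^k|\gamma)\to I_T(\pi|\gamma)$. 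The delicate points are the degeneracy of $\chi(\pi)$ where the profile touches $0$ or $1$, the matching of the boundary values at $\pm1$, and the regularity at $t=0$; I would handle these by a staged regularization — a convex combination with the stationary profile to push the density into $(0,1)$, mollification in space and time, and a short-time correction recovering $\gamma$ — verifying at each stage, using the lower semicontinuity of $I_T$, that the rate functional converges along the sequence.
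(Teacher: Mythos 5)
The paper does not prove Theorem~\ref{s02}: it is quoted from the reference \cite{blm1}, whose proof follows exactly the Kipnis--Olla--Varadhan architecture you describe (exponential martingales and minimax for the upper bound, tilted processes plus an $I_T$-density theorem for the lower bound --- indeed the paper re-uses Theorem~4.6 and Lemma~4.7 of \cite{blm1} later on). So your overall plan is the right one.

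There is, however, one genuine gap in your sketch: the claim that the Taylor expansion of $\frac 1N e^{-N\langle \pi^N_s,H_s\rangle}L_N e^{N\langle \pi^N_s,H_s\rangle}$ reduces the martingale exponent to $N\hat J_H(\pi^N)$ ``up to errors negligible on the scale $N$, uniformly in the configuration.'' This is false as stated. The quadratic term of the expansion produces $\frac 1N\sum_x \big(\eta(x{+}1)-\eta(x)\big)^2 (\nabla H)^2$, and the weak-asymmetry cross term produces a similar sum; these involve the nearest-neighbour correlation $\eta(x)\eta(x{+}1)$ and are \emph{not} functions of the empirical density. For the alternating configuration, $\frac 1N\sum_x(\eta(x{+}1)-\eta(x))^2=1$ while $2\chi(\pi^N)\approx 1/2$, so no configuration-uniform bound can close the expansion. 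The substitution of these local functions by $\chi(\pi^N)$ is precisely the content of the superexponential replacement (one-block) estimate, which is the central probabilistic input of the whole method and is needed again to establish the law of large numbers for the tilted measures $\bb Q^N_H$ in your lower bound. Your sketch omits it entirely. The remaining ingredients you list --- exponential tightness, the energy estimate identifying $\hat I_T$ with $I_T$ on the finite-rate set, and the staged regularization for the $I_T$-density argument (handling the degeneracy of $\chi$ at $0$ and $1$, the Dirichlet data at $\pm 1$, and the initial condition) --- are correctly identified and match what \cite{blm1} does.
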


\subsection*{The quasi-potential}

From now on we consider only the case $E\le E_0 = (\varphi_+ -
\varphi_-)/2$, where $\varphi_\pm =\log[\rho_\pm/(1-\rho_\pm)]$.
Simple computations, which are omitted, show that the unique
stationary solution $\upbar{\rho}_E \in \mc M$ of the hydrodynamic
equation \eqref{eq:1} can be described as follows.  For each $E\le
E_0$ there exists a unique $J_E\le 0$ such that
\begin{equation}
  \label{css}
  \frac 12 \int_{\rho_-}^{\rho_+}\! dr \: \frac{1}{E\chi(r)-J_E} = 1\;.
\end{equation}
The profile $\upbar{\rho}_E$ is then obtained by solving 
\begin{equation}
  \label{ss}
  \upbar{\rho}'_E - E \, \chi( \upbar{\rho}_E ) = - J_E
\end{equation}
with the boundary condition $\upbar{\rho}_E(-1)=\rho_-$.  Note that
$J_E/2$ is the current maintained by the stationary profile
$\upbar{\rho}_E$. The solution to \eqref{ss} can easily be written in
an explicit form, see \cite{de}.  We shall however only use, as can be
easily checked, that $\upbar\rho_E$ is strictly increasing and that
the inequality $J_E/E > \max_{r\in[\rho_-,\rho_+]} \chi(r)$ holds for
$E<0$.

Given $E\le E_0$, the \emph{quasi-potential} for the rate function
$I_T$ is the functional $V_E\,: \mc M \to \bb [0,+\infty]$ defined by 
\begin{equation}
\label{qp}
V_E(\rho) \;:=\; \inf_{T>0}\; \inf \big\{ 
I_T\big(\pi | \upbar{\rho}_E \big) \,,\: 
\pi\in D\big([0,T];\mc M\big) \,:\: \pi_T =\rho \big\} 
\end{equation}
so that $V_E(\rho)$ measures the minimal cost to produce the profile
$\rho$ starting from $\upbar\rho_E$.

Recall that $\mu^N_E$ is the unique invariant measure of the boundary
driven weakly asymmetric exclusion process. The following result,
which states that the quasi-potential gives the rate function of the
empirical density when particles are distributed according to
$\mu^N_E$ is proven in \cite{BG} in the case $E=0$. Thanks to
Theorem~\ref{s02}, the proof applies also to the weakly asymmetric
case.

\begin{theorem}
\label{t:rfim}
For each $E$ in $\bb R$, the sequence of probability measures on $\mc
M$ given by $\{\bb \mu^N_{E} \circ (\pi^N)^{-1}\}$ satisfies a large
deviation principle with speed $N$ and rate function $V_E$.  Namely,
for each closed set $\mc C \subset \mc M$ and each open set $\mc
O\subset \mc M$,
\begin{eqnarray*}
&& \varlimsup_{N\to\infty} \frac 1N \log \mu^N_{E} 
\big( \pi^N \in \mc C \big)
\;\leq\; - \inf_{\rho \in \mc C} V_E(\rho)  \\
&&\qquad 
\varliminf_{N\to\infty} \frac 1N \log \mu^N_{E} 
\big( \pi^N\in \mc O\big) 
\;\geq\; -  \inf_{\rho \in \mc O} V_E (\rho) \; .
\end{eqnarray*}
\end{theorem}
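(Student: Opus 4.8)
The statement is of Freidlin--Wentzell type: it identifies the rate function governing the invariant measure $\mu^N_E$ with the quasi-potential $V_E$ assembled, through \eqref{qp}, from the dynamical rate function. The plan is to deduce this static large deviation principle from the dynamical one of Theorem~\ref{s02}. The argument of \cite{BG} for $E=0$ uses only three ingredients: the dynamical LDP itself; the definition \eqref{qp} of $V_E$ as the minimal dynamical cost of steering the system from the stationary profile $\upbar{\rho}_E$ to $\rho$; and the fact that $\upbar{\rho}_E$ is the unique, globally attracting stationary solution of the hydrodynamic equation \eqref{eq:1}. Passing from $E=0$ to a general $E\le E_0$ alters only the profile $\upbar{\rho}_E$ and the numerical value of the cost, while these three ingredients persist; I would therefore reproduce that argument, whose skeleton I now describe.

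Both bounds rest on the invariance of $\mu^N_E$, which gives, for every $T>0$ and every Borel set $A$,
\begin{equation*}
\mu^N_E\big( \pi^N \in A \big) \;=\; \bb P^N_{\mu^N_E}\big( \pi^N_T \in A \big)\;,
\end{equation*}
thereby expressing the static probabilities through the time-$T$ law of the process. For the lower bound I fix an open set $\mc O$, a profile $\rho\in\mc O$ with $V_E(\rho)<\infty$, and $\epsilon>0$, and I choose $T$ together with a path $\pi$ joining $\upbar{\rho}_E$ to $\rho$ with $I_T(\pi\,|\,\upbar{\rho}_E)\le V_E(\rho)+\epsilon$; here the infimum over $T$ in \eqref{qp} is realized by taking $T$ possibly large. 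Since $\mu^N_E$ concentrates on $\upbar{\rho}_E$ --- the hydrostatic limit, a consequence of the hydrodynamic limit and of the uniqueness of the stationary profile --- one restricts the above identity to initial configurations whose empirical density is close to $\upbar{\rho}_E$, an event of $\mu^N_E$-probability tending to one. On this event the dynamical lower bound of Theorem~\ref{s02}, rendered uniform over such initial data by a path-concatenation argument that joins the starting profile to $\upbar{\rho}_E$ at arbitrarily small cost, yields a probability at least $\exp\{-N(V_E(\rho)+2\epsilon)\}$; letting $\epsilon\downarrow 0$ and optimizing over $\rho\in\mc O$ gives the lower bound.

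The upper bound is the delicate direction. For a closed set $\mc C$ one would like to split the integral $\int \mu^N_E(d\eta)\,\bb P^N_\eta(\pi^N_T\in\mc C)$ according to whether the empirical density of $\eta$ lies in a neighborhood $\mc U$ of $\upbar{\rho}_E$. On $\mc U$ the dynamical upper bound of Theorem~\ref{s02} controls $\bb P^N_\eta(\pi^N_T\in\mc C)$ by $\exp\{-N(\inf\{I_T(\pi\,|\,\gamma):\gamma\in\mc U,\ \pi_T\in\mc C\}-o(1))\}$, which, as $\mc U$ shrinks to $\upbar{\rho}_E$, is bounded by $\exp\{-N(\inf_{\rho\in\mc C}V_E(\rho)-\delta)\}$ for every fixed $T$, since $\inf\{I_T(\pi\,|\,\upbar{\rho}_E):\pi_T\in\mc C\}\ge \inf_{\rho\in\mc C}V_E(\rho)$ by \eqref{qp}. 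The genuine difficulty is the complementary term $\mu^N_E(\pi^N\notin\mc U)$: a crude bound does not suffice, because $\inf_{\mc C}V_E$ may exceed any fixed a priori estimate on the escape probability from $\mc U$. One must instead show that reaching $\mc C$ is exponentially costly at rate $\inf_{\mc C}V_E$ \emph{irrespective} of the random starting configuration, by a cost-stratified argument that exploits, over long time intervals, the global attraction toward $\upbar{\rho}_E$ in \eqref{eq:1} together with the dynamical upper bound.

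I expect this control of the invariant measure away from the attractor, at the exponential scale dictated by $V_E$, to be the main obstacle; it is precisely the heart of \cite{BG}. The point I would emphasize is that the weak asymmetry is invisible to this mechanism: once Theorem~\ref{s02} furnishes the dynamical LDP with a good, compact-level-set rate function, and \eqref{eq:1} retains $\upbar{\rho}_E$ as its unique global attractor, replacing the symmetric generator by the weakly asymmetric one introduces no new estimate, so the proof of \cite{BG} transfers unchanged.
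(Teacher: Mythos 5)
Your proposal follows exactly the route the paper takes: the paper does not reprove this statement but simply invokes the argument of \cite{BG} for $E=0$ and observes that, once the dynamical large deviation principle of Theorem~\ref{s02} is available, that proof carries over verbatim to the weakly asymmetric case. Your elaboration of the Freidlin--Wentzell skeleton (invariance identity, hydrostatic concentration on $\upbar{\rho}_E$, and the delicate control of escape from a neighborhood of the attractor in the upper bound) is a faithful description of the mechanism being cited, and your closing observation that the weak asymmetry introduces no new estimate is precisely the paper's justification.
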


In this paper we prove that the quasi-potential $V_E$ can be expressed
in terms of the solution to a one-dimensional boundary value problem.
This result has been obtained in \cite{de} by analyzing directly the
invariant measure $\mu^N_E$ through combinatorial techniques; while we
here follow instead the dynamic approach \cite{bdgjl2,bdgjl4} by
characterizing the optimal path for the variational problem
\eqref{qp}.

For $E<E_0$, let $C^{1+1}([-1,1])$ be the set of continuously
differentiable functions on $[-1,1]$ with Lipshitz derivative and set
\begin{equation}
\label{mcfe}
\mc F_E := \big\{ \varphi \in C^{1+1}([-1,1]) \,:\: 
\varphi(\pm 1) =\varphi_\pm\,,\: \varphi' > 0\lor E \big\}\;,
\end{equation}
where, given $a,b\in\bb R$, the notation $a\lor b$, resp.\ $a\wedge
b$, stands for $\max\{a,b\}$, resp.\ $\min\{a,b\}$. Note that $\mc F_E
= \mc F_{E'}$ for $E, E'<0$.

For $E< E_0$, $E\not =0$, let $\mc G_E : \mc M \times \mc F_E \to \bb
R$ be given by 
\begin{eqnarray}
\label{Grf}
\mc G_E  (\rho,\varphi) &:=&
\int_{-1}^1\!du \: \Big\{ \rho\log\rho + (1-\rho)\log(1-\rho)
+(1-\rho)\varphi -\log\big(1+e^{\varphi}\big) 
\nonumber
\\ && \phantom{\int_{-1}^1\!du \: \Big\{}
+ \; \frac 1E \Big[ \varphi' \log \varphi' 
- (\varphi' - E) \log (\varphi'  - E) \Big] - A_E 
\Big\}\;,\qquad\quad
\end{eqnarray}
where, by convention, $0\log 0 =0$ and $A_E$ is the constant given by
\begin{equation}
\label{cge}
A_E\; :=\; \log(-J_E) \;+\; \frac 12 \int_{\rho_-}^{\rho_+} \!dr \, 
\frac 1{E \chi(r)}
\log \Big[ 1 -  \frac {E \chi(r)}{J_E} \Big] \;.
\end{equation}
The right hand side is well defined because $J_E < 0$ and
$J_E/E>\max_{r\in[\rho_-,\rho_+]}\chi(r)$ for $E<0$.

For $E=0$, $\mc G_0: \mc M \times \mc F_E \to \bb R$ is defined by
continuity as
\begin{eqnarray*}
\!\!\!\!\!\!\!\!\!\!\! &&
\mc G_0  (\rho,\varphi) \;= \\
\!\!\!\!\!\!\!\!\!\!\! &&
\int_{-1}^1\!du \: \Big\{ \rho\log\rho + (1-\rho)\log(1-\rho)
+(1-\rho)\varphi  -\log\big(1+e^{\varphi}\big) 
+\; \log \varphi' +1 -A_0 \Big\}\;,
\end{eqnarray*}
where $A_0 = \log [(\rho_+ -\rho_-)/2]+1$.

For $E< E_0$, define the functional $S_E : \mc M \to \bb R$ by
\begin{equation}
\label{ssg}
S_E(\rho) \; :=\; \sup_{\varphi \in \mc F_E}  \: \mc G_E(\rho, \varphi)\;.
\end{equation}
Note that $S_E$ is a positive functional because a simple computation
relying on \eqref{ss} shows that
\begin{equation}
\label{diseg}
S_{E} (\rho) \;\ge\; \mc G_E(\rho, \upbar{\varphi}_{E})
\;=\; \int_{-1}^1 \!du\: \Big\{ \rho\log\frac{\rho}{\upbar\rho_{E}}    
+ (1-\rho)\log \frac{1-\rho}{1-\upbar\rho_{E}} \Big\}
\end{equation}
if $\upbar{\varphi}_{E}:= \log [\upbar\rho_E/(1-\upbar\rho_E)]$. 

In the special case $E=E_0$, as already observed, the weakly
asymmetric exclusion process is reversible and the stationary state
$\mu^N_{E_0}$ is a product measure. In particular, the rate functional
$S_{E_0}$ of the static large deviations principle for the empirical
density can be explicitly computed. It is given by
\begin{equation}
\label{f07}
S_{E_0} (\rho) 
\;=\; \int_{-1}^1 \!du\: \Big\{ \rho\log\frac{\rho}{\upbar\rho_{E_0}}    
+ (1-\rho)\log \frac{1-\rho}{1-\upbar\rho_{E_0}} \Big\} \;.
\end{equation}

The Euler-Lagrange equation associated to the variational problem
\eqref{ssg} is
\begin{equation}
\label{Deq}
\frac{\varphi''}{\varphi' (\varphi' -E)}
\;+\; \frac 1{1+ e^{\varphi}} \;=\; \rho \;.
\end{equation}
A function $\varphi\in\mc F_E$ solves the above equation when it is
satisfied Lebesgue a.e.  Recalling that the stationary profile
$\upbar\rho_E$ satisfies \eqref{ss} and \eqref{css}, it is easy to
check that if $\rho=\upbar\rho_E$ then $\upbar\varphi_E$ solves
\eqref{Deq} and $\mc G_E(\upbar\rho_E,\upbar\varphi_E)=0$.

The analysis of the quasi-potential for the boundary driven symmetric
exclusion process, i.e.\ the case $E=0$ of the current setting, has
been considered in \cite{bdgjl4}. In particular it is there shown that
$V_0$ coincides with $S_0$.  We prove in this article an analogous
statement for any $E\le E_0$.

\begin{theorem}
\label{t:S=V}
Let $E\le E_0$ and $V_E,S_E : \mc M \to [0,+\infty]$ be the
functionals defined in \eqref{qp}, \eqref{ssg} and \eqref{f07}.
\begin{enumerate}
\item[(i)] The functional $S_E$ is bounded, convex, and lower
  semicontinuous on $\mc M$.
\item[(ii)] Fix $E<E_0$. For each $\rho\in \mc M$ there exists in $\mc
  F_E$ a unique solution to \eqref{Deq} denoted by $\Phi(\rho)$.
  Moreover
\begin{equation}
\label{eq:S=V:2}
S_E(\rho) = \max_{\varphi\in\mc F_E} \mc G_E (\rho,\varphi) 
=\mc G_E (\rho,\Phi(\rho)) \;.
\end{equation}
\item[(iii)]
The equality $V_E=S_E$ holds on $\mc M$.
\end{enumerate}
\end{theorem}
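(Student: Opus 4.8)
The three parts build on one another, so I would carry them out in the stated order.

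For (i), the crucial observation is that for each fixed $\varphi\in\mc F_E$ the map $\rho\mapsto\mc G_E(\rho,\varphi)$ is convex: in the integrand of \eqref{Grf} the only non-affine dependence on $\rho$ is the binary entropy $\rho\log\rho+(1-\rho)\log(1-\rho)$, which is convex, while $(1-\rho)\varphi$ and every $\varphi$-dependent term are affine in $\rho$. Hence $S_E=\sup_{\varphi}\mc G_E(\cdot,\varphi)$ in \eqref{ssg} is a supremum of convex functionals, so it is convex. For lower semicontinuity I would use that, for fixed $\varphi$, the convex integral functional $\rho\mapsto\mc G_E(\rho,\varphi)$ is lower semicontinuous for the weak topology on $\mc M$ (the affine part is weakly continuous, the convex entropy part is weakly lower semicontinuous); a supremum of lower semicontinuous functionals is lower semicontinuous. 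The lower bound $S_E\ge0$ is already recorded in \eqref{diseg}; for the upper bound I would use the maximizer $\Phi(\rho)$ produced in (ii) to write $S_E(\rho)=\mc G_E(\rho,\Phi(\rho))$ and estimate the integrand using $0<\rho_-\le\upbar\rho_E\le\rho_+<1$, which keeps all logarithms bounded uniformly in $\rho\in\mc M$. For $E=E_0$ the three properties are immediate from the explicit form \eqref{f07}.

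For (ii) I would first show that $\varphi\mapsto\mc G_E(\rho,\varphi)$ is strictly concave on $\mc F_E$. Writing $g(p):=E^{-1}\big[p\log p-(p-E)\log(p-E)\big]$ for the density of the gradient term, a direct computation gives $g'(p)=E^{-1}\big[\log p-\log(p-E)\big]$ and $g''(p)=-1/[p(p-E)]<0$ on the admissible range $p>0\lor E$ (both for $E<0$ and $0<E<E_0$); thus $g(\varphi')$ is concave in $\varphi'$, hence in $\varphi$, and together with the concavity of $-\log(1+e^{\varphi})$ and the linearity of $(1-\rho)\varphi$ this makes $\mc G_E(\rho,\cdot)$ strictly concave. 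Strict concavity yields uniqueness of the maximizer and identifies \eqref{Deq} as the equation satisfied at the critical point. The substantive point is existence: I would solve the nonlinear boundary value problem \eqref{Deq} with $\varphi(\pm1)=\varphi_\pm$ and $\varphi'>0\lor E$ for arbitrary $\rho\in\mc M$. Using the identity $\varphi''/[\varphi'(\varphi'-E)]=E^{-1}\frac{d}{du}\log[(\varphi'-E)/\varphi']$, I would recast \eqref{Deq} as a first-order system for $(\varphi,\varphi')$ and solve it by a monotone shooting argument in the initial slope $\varphi'(-1)$, exploiting monotone dependence of the flow on the shooting parameter and a continuity/compactness argument to match $\varphi(1)=\varphi_+$ while keeping $\varphi'$ off the barrier $0\lor E$; strict concavity then promotes this critical point to the global maximizer, giving \eqref{eq:S=V:2}.

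For (iii) I would use the Hamilton--Jacobi characterization of the quasi-potential. From \eqref{f01}--\eqref{f02}, after integrating by parts (the test functions vanish at $\pm1$ and admissible finite-cost paths satisfy the boundary conditions), one reads off the Hamiltonian $H(\rho,p)=\langle\tfrac12\Delta\rho-\tfrac E2\nabla\chi(\rho),p\rangle+\tfrac12\langle\chi(\rho)\nabla p,\nabla p\rangle$. By the envelope theorem applied to the maximizer of (ii), the variational derivative is $\delta S_E/\delta\rho=\log[\rho/(1-\rho)]-\Phi(\rho)$, and I would verify, using \eqref{Deq} for $\Phi(\rho)$ and \eqref{ss}--\eqref{css} for $\upbar\rho_E$, that $H(\rho,\delta S_E/\delta\rho)=0$ and $S_E(\upbar\rho_E)=0$. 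Writing $I_T$ in Lagrangian form $I_T(\pi\,|\,\upbar\rho_E)=\int_0^T\mc L(\pi_t,\partial_t\pi_t)\,dt$, the Legendre inequality $\mc L(\rho,\partial_t\rho)\ge\langle p,\partial_t\rho\rangle-H(\rho,p)$ taken at $p=\delta S_E/\delta\rho$ gives $\mc L(\rho,\partial_t\rho)\ge\langle\delta S_E/\delta\rho,\partial_t\rho\rangle$, which integrated along any path from $\upbar\rho_E$ to $\rho$ telescopes to $S_E(\rho)-S_E(\upbar\rho_E)=S_E(\rho)$, whence $V_E\ge S_E$ via \eqref{qp}. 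For the matching bound I would exhibit the optimal path as the time reversal of the relaxation trajectory of the adjoint flow $\partial_t\rho=-\tfrac12\Delta\rho+\tfrac E2\nabla\chi(\rho)+\nabla(\chi(\rho)\nabla[\delta S_E/\delta\rho])$ started at $\rho$, for which $S_E$ is a Lyapunov functional, so the trajectory relaxes to $\upbar\rho_E$; one checks it has finite energy $\mc Q<\infty$ and that equality holds in the Legendre inequality along it, so its cost equals $S_E(\rho)$ and $V_E\le S_E$ after a $T\uparrow\infty$ limit. For $E=E_0$, reversibility reduces $S_{E_0}$ to the relative entropy \eqref{f07} and $V_{E_0}=S_{E_0}$ is the classical identification of the quasi-potential of a reversible dynamics.

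The main obstacles I anticipate are two. The first is the existence half of (ii): solving the nonlinear, non-smooth problem \eqref{Deq} for general $\rho\in L_\infty$ and controlling $\varphi'$ so that the solution remains in $\mc F_E$. The second is making the Hamilton--Jacobi argument of (iii) rigorous at low regularity, where $\delta S_E/\delta\rho$ need not be smooth, $\rho$ has no pointwise boundary values, and the adjoint flow reaches $\upbar\rho_E$ only as $T\uparrow\infty$; both the Legendre inequality for all finite-energy paths and the construction of a near-optimal path reaching $\rho$ exactly will require careful approximation and control of the boundary terms.
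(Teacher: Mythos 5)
Your overall architecture coincides with the paper's: convexity, lower semicontinuity and boundedness of $S_E$ as a supremum of convex lower semicontinuous functionals; strict concavity of $\mc G_E(\rho,\cdot)$ plus the Euler--Lagrange equation \eqref{Deq} for (ii); and, for (iii), the Hamilton--Jacobi identity $H(\rho,\delta S_E/\delta\rho)=0$ (the paper's Lemmata~\ref{dotS} and \ref{ham-jac}) giving $V_E\ge S_E$, with the time reversal of the adjoint relaxation flow \eqref{adhr*} giving the matching upper bound. Two of your sub-arguments differ genuinely from the paper's. For existence in (ii) you propose a monotone shooting argument in $\varphi'(-1)$, whereas the paper reformulates \eqref{Deq} as a fixed point of an integral operator $\mc K^{(i)}_\rho$ and applies Schauder's theorem; your monotonicity is exactly what the paper uses for \emph{uniqueness}, and the shooting route is viable provided you control possible blow-up of $\varphi'$ for large initial slopes (the identity \eqref{u2} shows the admissible range of $\frac1E\log[(\varphi'(-1)-E)/\varphi'(-1)]$ is constrained by the $O(1)$ integral term), but the Schauder route has the advantage of directly yielding the uniform bounds \eqref{bk1}--\eqref{bk2} and the $C^1$ continuous dependence $\rho\mapsto\Phi(\rho)$, both of which are needed later (for the chain rule in Lemma~\ref{dotS} and for the approximation arguments). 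Your boundedness argument in (i) via the maximizer also works but makes (i) depend on (ii); the paper's Jensen-inequality bound on the gradient term is uniform over all of $\mc F_E$ and independent of (ii).

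The one genuine gap is in your construction of the optimal path for $V_E\le S_E$. You posit the adjoint flow and assert that it relaxes to $\upbar\rho_E$ because ``$S_E$ is a Lyapunov functional.'' This is not enough for two reasons. First, \eqref{adhr*} is a \emph{nonlocal} parabolic equation (through $\Phi(\rho^*_t)$), and its well-posedness, the preservation of $0<\rho^*<1$ and of the boundary values $\rho_\pm$ are not standard facts; the paper's key device (Lemma~\ref{t:cadh}) is to \emph{solve} it by conjugation: solve \eqref{Deq} once for the initial datum, evolve $G=e^{\Phi(\gamma)}/(1+e^{\Phi(\gamma)})$ by the \emph{local} hydrodynamic equation \eqref{eq:1}, and read off $\rho^*$ from the left-hand side of \eqref{Deq}. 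Second, monotonicity of $S_E$ along the flow only controls $S_E(\rho^*_t)$, whereas closing the argument requires $\rho^*_t\to\upbar\rho_E$ in $C^1$, \emph{uniformly} over initial data in $\mc M_0$ (Lemma~\ref{t:conv}), because the truncated reversed path must be patched to $\upbar\rho_E$ by a connector whose cost is controlled by $\|\rho^*_{T_1}-\upbar\rho_E\|_{C^1}^2$ (Lemma~\ref{t:join}); this strong uniform convergence is obtained in the paper precisely from the conjugation to \eqref{eq:1} and the known relaxation of the viscous Burgers equation. Relatedly, the passage from smooth data to general $\rho\in\mc M$ (via Lemma~\ref{t:Sden} and compactness of the level sets of $I_T$) and, on the lower-bound side, the density of smooth finite-cost paths (Theorem~\ref{s01}, imported from \cite{blm1}) are nontrivial inputs that you correctly flag as obstacles but do not supply.
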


The proof of the last item of the previous theorem is achieved by
characterizing the optimal path for the variational problem \eqref{qp}
defining the quasi-potential. For $E<E_0$ it is obtained by the
following algorithm. Given $\rho\in\mc M$ let $\Phi(\rho)\in\mc F_E$
be the solution to \eqref{Deq} and define $G=e^{\Phi(\rho)} /
[1+e^{\Phi(\rho)}]$.  Let $F\equiv F_t(u)$ be the solution to the
viscous Burgers equation \eqref{eq:1} with initial condition $G$ and
set $\psi = \log [ F/(1-F)]$, note that $\psi_0= \Phi(\rho)$ and
$\psi_t\to\upbar\varphi_E$ as $t\to\infty$.  Let $\rho^*_t =
\Phi^{-1}(\psi_t)$, i.e.\ $\rho^*_t$ is given by the l.h.s.\ of
\eqref{Deq} with $\varphi$ replaced by $\psi_t$. Observe that
$\rho^*_0=\rho$ and $\rho^*_t\to \upbar\rho_E$ as $t\to\infty$.  The
optimal path for \eqref{qp} is then $\pi^*_t=\rho^*_{-t}$, the fact
that it is defined on the time interval $(-\infty,0]$ instead of
$[0,\infty)$ makes no real difference. As discussed in
\cite{bdgjl2,BDGJL7}, this description of the optimal path $\pi^*$ is
related to the possibility of expressing the hydrodynamic limit for
the process on $\Sigma_N$ whose generator is the adjoint of $L_N$ in
$L_2(d\mu^N_E)$ in terms of \eqref{eq:1} via the nonlocal map $\Phi$.

\subsection*{The asymmetric limit}

Consider the boundary driven asymmetric exclusion process, that is the
process on $\Sigma_N$ with generator given by \eqref{f03} where the
external field $E$ is replaced by $N\alpha$ and the generator is
speeded up by $N$ instead of $N^2$. We consider only the case
$\alpha<0$. According to the previous notation, denote by
$\mu^N_{N\alpha}$ the unique invariant measure of the boundary driven
asymmetric exclusion process with external field $\alpha N$. In the
hydrodynamic scaling limit, it is proved in \cite{ba} that the
empirical density converges to the unique entropy solution to the
inviscid Burgers equation with BLN boundary conditions, namely
\eqref{eq:1} with $E/2$ replaced by $\sinh(\alpha/2)$ and no
viscosity.

Let $\upbar\rho_\asy \in \{\rho_-,\rho_+,1/2\}$ be such that
$\max_{r\in [\rho_-,\rho_+]} \chi(r) = \chi(\upbar\rho_\asy)$. It is
not difficult to check that the stationary profile $\upbar\rho_E$
converges, as $E\to -\infty$, to the constant density profile equal to
$\upbar\rho_\asy$, which is the unique stationary solution to the
inviscid Burgers equation with the prescribed boundary conditions.

By using combinatorial techniques, it is shown in \cite{DLS3} that the
sequence of probability measures $\{\mu^N_{N\alpha}\circ
(\pi^N)^{-1}\}$ on $\mc M$ satisfies a large deviation principle with
speed $N$ and rate function $S_\asy$ defined as follows. Let
\begin{equation}
\label{Fa}
\mc F_\asy := \Big\{ \varphi\in C^1 \big([-1,1]\big)\,:\:
\varphi(\pm 1) = \varphi_\pm \,,\, \varphi' >0 \Big\}\;.  
\end{equation}
Note that $\mc F_E\subset \mc F_\asy$.

Given $\rho\in\mc M$ and $\varphi\in\mc F_\asy$ set
\begin{equation}
\label{Grf-in}
\mc G_{\asy}  (\rho,\varphi)  := \int_{-1}^1\!du \:
\Big\{ \rho\log\rho + (1-\rho)\log(1-\rho)
+(1-\rho)\varphi  -\log\big(1+e^{\varphi}\big) - A_{\asy} \Big\}
\end{equation}
in which the constant $A_{\asy}$ is 
\begin{equation}
\label{cga}
A_{\asy} := \max_{r\in[\rho_-,\rho_+]}  \: \log \chi(r) 
\;=\; \log \chi(\upbar\rho_\asy)  \;.
\end{equation}
Let
\begin{equation}
\label{s-in}
S_{\asy}(\rho) \;:=\; \sup_{\varphi \in \mc F_{\asy}} \; \mc
G_{\asy}(\rho,\varphi) \;.
\end{equation}

The functional $S_{\asy}$ is written in a somewhat different form in
\cite{DLS3}. The above expression is however simply obtained by
replacing the trial function $F$ in \cite{DLS3} by
$e^\varphi/(1+e^\varphi)$. The advantage of the above formulation is
that for each $\rho\in \mc M$ the functional $\mc
G_{\asy}(\rho,\cdot)$ is concave on $\mc F_\asy$.  By choosing
$\varphi = \log[\upbar\rho_\asy/(1-\upbar\rho_\asy)]$ as trial
function in \eqref{s-in} we get a lower bound analogous to
\eqref{diseg}: 
\begin{equation*}
S_{\asy}(\rho) \ge \int_{-1}^{1}\!du\, 
\Big\{\rho\log\frac{\rho}{\upbar\rho_\asy} +
(1-\rho) \log\frac{1-\rho}{1-\upbar\rho_\asy} \Big\}  \;.
\end{equation*}
Note finally that $S_{\asy}$ does not depend on $\alpha<0$. 

We prove in Section \ref{s:asyl} that the functional $S_E$ converges,
as $E\downarrow -\infty$, to $S_\asy$. As discussed in
\cite[Lemma~4.3]{BDGJL8}, the appropriate notion of variational
convergence for rate functionals is the so-called
$\Gamma$-convergence.  Referring e.g.\ to \cite{Braides} for more
details, we just recall its definition.  Let $X$ be a metric space.  A
sequence of functionals $F_n : X \to [0,+\infty]$ is said to
\emph{$\Gamma$-converge} to a functional $F : X \to [0,+\infty]$ if
the following two conditions hold for each $x\in X$. There exists a
sequence $x_n\to x$ such that $\varlimsup_n F_n(x_n) \le F(x)$
(\emph{$\Gamma$-limsup inequality}) and for any sequence $x_n\to x$ we
have $\varliminf_n F_n(x_n) \ge F(x)$ (\emph{$\Gamma$-liminf
  inequality}).

\begin{theorem}
\label{t:gconv}
Let $S_E \,:\mc M \to [0,+\infty]$ be as defined in \eqref{ssg}.  As
$E\downarrow -\infty$, the sequence of functionals $\{S_E\}$
$\Gamma$-converges in $\mc M$ to $S_{\asy}$ defined in \eqref{s-in}.
\end{theorem}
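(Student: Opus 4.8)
The plan is to exploit that $\mc G_E$ and $\mc G_\asy$ differ only through the term containing $\varphi'$ and through the additive constant. For $E<0$ and $p>0$ set
\[
g_E(p) \;:=\; \frac1E\big[\, p\log p - (p-E)\log(p-E)\,\big],
\]
so that $g_E(\varphi')$ is precisely the $E$-dependent part of the integrand of $\mc G_E$, a term absent from $\mc G_\asy$. Since all the remaining $\rho$- and $\varphi$-dependent terms coincide, one obtains the clean identity
\[
\Delta_E(\varphi) \;:=\; \mc G_E(\rho,\varphi) - \mc G_\asy(\rho,\varphi)
\;=\; \int_{-1}^1 g_E(\varphi')\,du \;-\; 2A_E \;+\; 2A_\asy,
\]
which, crucially, does not depend on $\rho$. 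Recalling also that $\mc F_E\subset\mc F_\asy$ for $E<0$, the whole argument reduces to two asymptotic facts about $\Delta_E$: a \emph{uniform} upper bound $\sup_{\varphi\in\mc F_E}\Delta_E(\varphi)\le c_E$ with $c_E\to0$, and the \emph{pointwise} convergence $\Delta_E(\varphi)\to0$ for each fixed smooth $\varphi$.

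For the uniform bound I would use that $g_E$ is concave: a direct computation gives $g_E'(p)=E^{-1}\log[p/(p-E)]>0$ and $g_E''(p)=-[p(p-E)]^{-1}<0$ for $p>0$, $E<0$. Since $\tfrac12\,du$ is a probability measure on $[-1,1]$ and the average derivative $\tfrac12\int_{-1}^1\varphi'\,du=\tfrac12(\varphi_+-\varphi_-)=E_0$ is fixed, Jensen's inequality yields $\int_{-1}^1 g_E(\varphi')\,du\le 2\,g_E(E_0)$ uniformly over $\varphi\in\mc F_E$. It then remains to match the divergences. From $g_E(0)=\log(-E)$ and the expansion of $g_E$ one gets $g_E(p)=\log(-E)+o(1)$ uniformly for $p$ in compact subsets of $[0,\infty)$, so in particular $g_E(E_0)=\log(-E)+o(1)$. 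For $A_E$ I would first extract the asymptotics of $J_E$: writing $J_E=E\,m_E$, equation \eqref{css} becomes $\int_{\rho_-}^{\rho_+}[\chi(r)-m_E]^{-1}\,dr=2E$, which forces $m_E\downarrow\max_r\chi(r)=\chi(\upbar\rho_\asy)$ as $E\downarrow-\infty$. Then $\log(-J_E)=\log(-E)+\log m_E=\log(-E)+A_\asy+o(1)$, while the integral term in \eqref{cge} carries a factor $E^{-1}$ multiplying a quantity that stays bounded (the logarithmic singularity of $\log(m_E-\chi(r))$ at the maximiser of $\chi$ is integrable) and hence vanishes. This gives $A_E=\log(-E)+A_\asy+o(1)$, so that $c_E:=2g_E(E_0)-2A_E+2A_\asy\to0$.

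Granting these facts, the two $\Gamma$-inequalities are short. For the $\Gamma$-limsup I take the constant recovery sequence $\rho_E=\rho$ and estimate, using the uniform bound and $\mc F_E\subset\mc F_\asy$,
\[
S_E(\rho)=\sup_{\varphi\in\mc F_E}\big[\mc G_\asy(\rho,\varphi)+\Delta_E(\varphi)\big]
\le \sup_{\varphi\in\mc F_E}\mc G_\asy(\rho,\varphi)+c_E
\le S_\asy(\rho)+c_E,
\]
whence $\varlimsup_{E}S_E(\rho)\le S_\asy(\rho)$. For the $\Gamma$-liminf, let $\rho_E\to\rho$ in $\mc M$ and fix $\epsilon>0$. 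By density of smooth functions with $\varphi'>0$ in $\mc F_\asy$, I choose $\varphi^\ast\in\mc F_E$ with $\mc G_\asy(\rho,\varphi^\ast)\ge S_\asy(\rho)-\epsilon$; then $S_E(\rho_E)\ge\mc G_E(\rho_E,\varphi^\ast)=\mc G_\asy(\rho_E,\varphi^\ast)+\Delta_E(\varphi^\ast)$. Since $\varphi^\ast$ is fixed with $(\varphi^\ast)'$ bounded away from $0$ and $\infty$, the pointwise statement gives $\Delta_E(\varphi^\ast)\to0$; moreover $\rho\mapsto\mc G_\asy(\rho,\varphi^\ast)$ is weakly lower semicontinuous on $\mc M$, its only nonlinear part being the convex entropy $\int[\rho\log\rho+(1-\rho)\log(1-\rho)]\,du$. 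Hence $\varliminf_E S_E(\rho_E)\ge\mc G_\asy(\rho,\varphi^\ast)\ge S_\asy(\rho)-\epsilon$, and letting $\epsilon\downarrow0$ finishes the argument.

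The main obstacle is that the two quantities $\int g_E(\varphi')\,du$ and $2A_E$ each diverge like $2\log(-E)$, so the entire point is that these divergences cancel in $\Delta_E$. The delicate step is therefore the matching asymptotics — in particular establishing $m_E=J_E/E\to\chi(\upbar\rho_\asy)$ and controlling the $E^{-1}$-weighted singular integral in $A_E$ — together with the observation that the concavity of $g_E$ converts the fixed-mass constraint $\int_{-1}^1\varphi'\,du=2E_0$ into the uniform bound $\int_{-1}^1 g_E(\varphi')\,du\le2g_E(E_0)$ required for the limsup inequality.
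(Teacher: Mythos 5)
Your proposal is correct. The $\Gamma$-liminf half is essentially the paper's argument: the same asymptotics $J_E/E\to\max_r\chi(r)$ and $A_E=\log(-E)+A_\asy+o(1)$ (the paper's \eqref{convae}), the same pointwise cancellation for a fixed smooth trial function (the paper's \eqref{pezzo}), the same use of convexity of the entropy for lower semicontinuity of $\mc G_\asy(\cdot,\varphi^*)$ along $\rho_E\to\rho$, and the same density remark to optimize over $C^{1+1}$ trial functions. The $\Gamma$-limsup half, however, is organized genuinely differently. The paper takes the constant recovery sequence, evaluates $S_E(\rho)=\mc G_E(\rho,\Phi(\rho))$ at the maximizer (so it relies on Theorem~\ref{t:S=V}~(ii), hence on the existence and uniqueness theory for the Euler--Lagrange equation \eqref{Deq}), applies Jensen to that particular profile to get \eqref{pezzo2}, and then invokes compactness of $\overline{\mc F}_\asy$ to extract a subsequential limit $\phi^*$ of the maximizers and bound $\mc G_\asy(\rho,\phi^*)\le S_\asy(\rho)$. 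You instead observe that the very same Jensen computation — concavity of $g_E$ together with the fixed mass $\tfrac12\int_{-1}^1\varphi'\,du=E_0$ — gives the bound $\Delta_E(\varphi)\le 2g_E(E_0)-2A_E+2A_\asy=:c_E$ \emph{uniformly} over $\varphi\in\mc F_E$, and since $\Delta_E$ does not depend on $\rho$ this yields $S_E(\rho)\le S_\asy(\rho)+c_E$ directly from $\mc F_E\subset\mc F_\asy$. Your route avoids the compactness of $\overline{\mc F}_\asy$, the extraction of convergent subsequences of optimizers, and any appeal to the solvability of \eqref{Deq}; it is self-contained and arguably cleaner for Theorem~\ref{t:gconv} alone. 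What the paper's route buys in exchange is the machinery (precompactness of $\{\phi_E\}$ in $\overline{\mc F}_\asy$ and identification of its limit points) that is reused immediately afterwards in the proof of Theorem~\ref{t:cof} on convergence of the maximizers, which your argument does not address. Your more careful justification of $\log(-J_E)=\log(-E)+A_\asy+o(1)$ and of the vanishing of the $E^{-1}$-weighted integral in \eqref{cge} (integrability of the logarithmic singularity at the maximizer of $\chi$) is a welcome expansion of what the paper dismisses as ``straightforward to check.''
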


While the above result deals only with the variational convergence of
the quasi-potential, it is reasonable to expect also the convergence
of the dynamical rate functional. More precisely, the dynamic rate
functional \eqref{f02} of the weakly asymmetric exclusion process
should converge, in the appropriate scaling, to the one for the
asymmetric exclusion process. We refer to \cite{BD} for a discussion
of this topic and we mention that the above result has been proven in
\cite{BBMN} for general scalar conservation laws on the real line.

$\Gamma$-convergence implies an upper bound for the infimum over
open sets and a lower bound for the infimum over compacts sets:
For each compact set $\mc K \subset \mc M$ and each open set $\mc
O\subset \mc M$ 
\begin{eqnarray*}
\!\!\!\!\!\!\!\!\!\!\!\! &&
\varliminf_{E\to-\infty}\; \inf_{\rho \in \mc K} S_{E}(\rho)
\;\geq\; \inf_{\rho \in \mc K} S_{\asy}(\rho)\;, \\
\!\!\!\!\!\!\!\!\!\!\!\! && \qquad
\varlimsup_{E\to-\infty}\; \inf_{\rho \in \mc O} S_{E}(\rho)
\;\leq\; \inf_{\rho \in \mc O} S_{\asy}(\rho)\;.
\end{eqnarray*}
The proof of this statement is straightforward and can be found in
\cite[Prop.~1.18]{Braides}. Since $\mc M$ is compact, the previous
fact and Theorems~\ref{t:rfim}, \ref{t:S=V} (iii), \ref{t:gconv}
provide the following asymptotics for the invariant measure $\mu^N_E$.

\begin{corollary}
\label{t:cgconv}
For each closed set $\mc C \subset \mc M$ and each open set $\mc
O\subset \mc M$ 
\begin{eqnarray*}
&&   \varlimsup_{E\to-\infty}\; \varlimsup_{N\to\infty}\; 
\frac 1N \log \mu^N_{E} \big( \pi^N \in \mc C \big)
\;\leq\; - \inf_{\rho \in \mc C} S_{\asy}(\rho)  \\
&&\qquad \varliminf_{E\to-\infty}\;
\varliminf_{N\to\infty} \; \frac 1N \log \mu^N_{E} 
\big( \pi^N\in \mc O\big) 
\;\geq\; -  \inf_{\rho \in \mc O} S_{\asy} (\rho) \; .
\end{eqnarray*}
\end{corollary}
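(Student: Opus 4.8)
The plan is to combine the three ingredients already in place: the finite-$E$ large deviation principle of Theorem~\ref{t:rfim}, the identification $V_E=S_E$ furnished by Theorem~\ref{t:S=V}~(iii), and the $\Gamma$-convergence $S_E\to S_{\asy}$ of Theorem~\ref{t:gconv}, the latter entering only through its two elementary consequences recalled in the two displayed inequalities just before the statement. The single structural remark that makes everything fit is that $\mc M$ is a compact Polish space, so every closed set $\mc C\subset\mc M$ is automatically compact; this is exactly what is needed in order to apply the $\Gamma$-liminf bound over compacts to $\mc C$.

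For the upper bound I would fix a closed, hence compact, set $\mc C$. For each fixed $E<E_0$, Theorem~\ref{t:rfim} together with the equality $V_E=S_E$ of Theorem~\ref{t:S=V}~(iii) gives $\varlimsup_{N\to\infty}\frac1N\log\mu^N_E(\pi^N\in\mc C)\le -\inf_{\rho\in\mc C}V_E(\rho)=-\inf_{\rho\in\mc C}S_E(\rho)$. Taking $\varlimsup_{E\to-\infty}$ on both sides turns the right-hand side into $-\varliminf_{E\to-\infty}\inf_{\rho\in\mc C}S_E(\rho)$, and the $\Gamma$-convergence lower bound over the compact set $\mc C$ bounds this quantity below by $\inf_{\rho\in\mc C}S_{\asy}(\rho)$. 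This yields precisely $\varlimsup_{E\to-\infty}\varlimsup_{N\to\infty}\frac1N\log\mu^N_E(\pi^N\in\mc C)\le -\inf_{\rho\in\mc C}S_{\asy}(\rho)$.

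For the lower bound I would fix an open set $\mc O$ and argue symmetrically. For each fixed $E<E_0$, the lower bound in Theorem~\ref{t:rfim} combined with $V_E=S_E$ gives $\varliminf_{N\to\infty}\frac1N\log\mu^N_E(\pi^N\in\mc O)\ge -\inf_{\rho\in\mc O}S_E(\rho)$. Taking $\varliminf_{E\to-\infty}$ produces $-\varlimsup_{E\to-\infty}\inf_{\rho\in\mc O}S_E(\rho)$, and the $\Gamma$-convergence upper bound over the open set $\mc O$ bounds the inner $\varlimsup$ above by $\inf_{\rho\in\mc O}S_{\asy}(\rho)$, giving $\varliminf_{E\to-\infty}\varliminf_{N\to\infty}\frac1N\log\mu^N_E(\pi^N\in\mc O)\ge -\inf_{\rho\in\mc O}S_{\asy}(\rho)$.

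The argument is essentially bookkeeping and I do not expect a genuine obstacle. The only point requiring care is the correct pairing of the iterated limits with the two directions of the $\Gamma$-convergence inequalities, namely the $\varliminf$ of infima over compacts for the closed/upper estimate and the $\varlimsup$ of infima over opens for the open/lower estimate, together with the observation that compactness of $\mc M$ lets the compact-set bound be applied to the closed set $\mc C$. Everything else is an immediate substitution of $V_E=S_E$ into the already-established finite-$E$ large deviation bounds.
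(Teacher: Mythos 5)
Your argument is correct and is exactly the paper's intended proof: the paper combines Theorem~\ref{t:rfim}, the identity $V_E=S_E$ from Theorem~\ref{t:S=V}~(iii), and the two displayed consequences of $\Gamma$-convergence (applied to $\mc C$, compact by compactness of $\mc M$, and to $\mc O$) in precisely the pairing you describe. Nothing further is needed.
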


The last topic we discuss is the asymptotic behavior as, $E\to
-\infty$, of the solution to the Euler-Lagrange equation \eqref{Deq}.
More precisely, we show that it converges to the unique maximizer for
\eqref{s-in}. 

Consider the set $\mc F_\asy$ equipped with the topology inherited
from the weak convergence of measures on $[-1,1)$: $\varphi^n\to
\varphi$ in $\mc F_\asy$ if and only if $\int_{-1}^1 \!d\varphi^n \,G
\to \int_{-1}^1 \!d\varphi \,G$ for any function $G$ in
$C_0\big([-1,1)\big)$, the set of continuous functions $G:[-1,1)\to\bb
R$ such that $\lim_{u\uparrow 1} G(u)=0$. The closure of $\mc F_\asy$,
denoted by $\overline {\mc F}_\asy$, consists of all nondecreasing,
c\`adl\`ag functions $\varphi:[-1,1) \to [\varphi_-, \varphi_+]$ such
that $\varphi(-1) = \varphi_-$, $\lim_{u\uparrow 1} \varphi(u) \le
\varphi_+$. By Helly theorem $\overline{\mc F}_\asy$ is a compact
Polish space. Moreover, if $\varphi^n\to\varphi$ in $\overline {\mc
  F_\asy}$ then $\varphi^n(u)\to\varphi(u)$ Lebesgue a.e.

\begin{theorem}
\label{t:cof}
Fix $\rho\in\mc M$. There exists a unique $\phi\in\overline{\mc F}_\asy$ such
that $S_\asy(\rho)=\max_{\varphi\in \mc F_\asy} \mc
G_\asy(\rho,\varphi)= \mc G_\asy(\rho,\phi)$.  Let $\phi_E :=
\Phi(\rho)\in \mc F_E$ be the optimal profile for \eqref{ssg}. As
$E\to -\infty$ the sequence $\{\phi_E\}$ converges to $\phi$ in $\mc
F_\asy$.
\end{theorem}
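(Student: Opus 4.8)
The plan is to treat the two assertions separately: first the existence and uniqueness of the maximizer $\phi$ in $\overline{\mc F}_\asy$, then the convergence $\phi_E\to\phi$. Throughout I abbreviate the gradient term in \eqref{Grf} by $h_E(p) := \frac1E[p\log p - (p-E)\log(p-E)]$, so that, using $\mc F_E\subset\mc F_\asy$, for every $\varphi\in\mc F_E$ one has $\mc G_E(\rho,\varphi) = \mc G_\asy(\rho,\varphi) + \int_{-1}^1 h_E(\varphi')\,du - 2(A_E-A_\asy)$. The whole convergence argument reduces to controlling these two extra terms.

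For the first assertion I extend $\mc G_\asy(\rho,\cdot)$ to $\overline{\mc F}_\asy$ by the same formula. Since every $\varphi\in\overline{\mc F}_\asy$ takes values in the fixed bounded interval $[\varphi_-,\varphi_+]$, the integrand $(1-\rho)\varphi-\log(1+e^{\varphi})$ is uniformly bounded and continuous in $\varphi$; as convergence in $\overline{\mc F}_\asy$ entails Lebesgue-a.e.\ convergence of the functions, the bounded convergence theorem shows that $\mc G_\asy(\rho,\cdot)$ is continuous on the compact space $\overline{\mc F}_\asy$. Hence the maximum is attained and, $\mc F_\asy$ being dense in $\overline{\mc F}_\asy$, it equals $S_\asy(\rho)$. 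Uniqueness follows from strict concavity: since $\partial_\varphi^2[(1-\rho)\varphi-\log(1+e^{\varphi})] = -e^{\varphi}(1+e^{\varphi})^{-2}<0$ uniformly in $\rho$, the map $\varphi\mapsto\mc G_\asy(\rho,\varphi)$ is strictly concave on the convex set $\overline{\mc F}_\asy$, so two distinct maximizers would be strictly improved upon by their average. This produces the unique $\phi$.

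For the convergence, note that each $\phi_E\in\mc F_E$ is increasing with $\phi_E(\pm1)=\varphi_\pm$, hence $\phi_E\in\overline{\mc F}_\asy$; by compactness it suffices to show that every subsequential limit $\phi^*$ of $\{\phi_E\}$ equals $\phi$, and by uniqueness this reduces to $\mc G_\asy(\rho,\phi^*)=S_\asy(\rho)$. I first record the asymptotics $A_E-A_\asy=\log(-E)+o(1)$, which follow by writing $\log(-J_E)=\log(-E)+\log(J_E/E)$ and using $J_E/E\to\chi(\upbar\rho_\asy)$ as $E\to-\infty$ (a consequence of \eqref{css} and the convergence $\upbar\rho_E\to\upbar\rho_\asy$), the residual integral in \eqref{cge} being $O((-E)^{-1})$. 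The lower bound $\varliminf_{E\to-\infty}S_E(\rho)\ge S_\asy(\rho)$ is then obtained by fixing a smooth trial $\psi\in\mc F_\asy$ with $\psi'$ bounded away from $0$ and $\infty$: for such $\psi$ one has $h_E(\psi'(u))=\log(-E)+o(1)$ uniformly, whence $\mc G_E(\rho,\psi)\to\mc G_\asy(\rho,\psi)$; since $S_E(\rho)\ge\mc G_E(\rho,\psi)$, taking the supremum over $\psi$ gives the claim.

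The matching upper bound is the main obstacle, because along the optimal sequence $\phi_E'$ may develop boundary layers where it is large, so that $\int_{-1}^1 h_E(\phi_E')\,du$ cannot be controlled pointwise. The observation that bypasses this is that $h_E$ is concave on $(0,\infty)$, since $h_E''(p)=-[p(p-E)]^{-1}<0$. Jensen's inequality together with the fixed total increment $\frac12\int_{-1}^1\phi_E'\,du=\frac12(\varphi_+-\varphi_-)=E_0$ then yields $\int_{-1}^1 h_E(\phi_E')\,du\le 2\,h_E(E_0)=2\log(-E)+o(1)$, with no need to resolve the layers. Combined with the asymptotics of $A_E$, this gives $\mc G_E(\rho,\phi_E)\le\mc G_\asy(\rho,\phi_E)+o(1)$, and since $\phi_E\in\mc F_\asy$ forces $\mc G_\asy(\rho,\phi_E)\le S_\asy(\rho)$, we get $S_E(\rho)=\mc G_E(\rho,\phi_E)\le S_\asy(\rho)+o(1)$, hence $S_E(\rho)\to S_\asy(\rho)$. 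Finally, continuity of $\mc G_\asy(\rho,\cdot)$ gives $\mc G_\asy(\rho,\phi^*)=\lim_k\mc G_\asy(\rho,\phi_{E_k})$, and this limit is squeezed between $\mc G_{E_k}(\rho,\phi_{E_k})-o(1)=S_{E_k}(\rho)-o(1)\to S_\asy(\rho)$ and the bound $S_\asy(\rho)$, so $\mc G_\asy(\rho,\phi^*)=S_\asy(\rho)$ and therefore $\phi^*=\phi$.
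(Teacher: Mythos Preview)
Your proof is correct and rests on the same ingredients as the paper's: compactness of $\overline{\mc F}_\asy$ and continuity of $\mc G_\asy(\rho,\cdot)$ for existence, strict concavity of $\varphi\mapsto -\log(1+e^\varphi)$ for uniqueness, the asymptotics $A_E-\log(-E)\to A_\asy$ (the paper's \eqref{convae}), a fixed smooth trial function for the lower bound (the paper's \eqref{pezzo}), and Jensen's inequality applied to the concave function $h_E$ for the upper bound (the paper's \eqref{pezzo2}).

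The only difference is in packaging. The paper organizes the convergence argument as $\Gamma$-convergence of the extended functionals $\upbar{\mc G}_E(\rho,\cdot)$ to $\mc G_\asy(\rho,\cdot)$ on $\overline{\mc F}_\asy$ and then invokes the fundamental theorem of $\Gamma$-convergence (Braides, Thm.~1.21) to conclude convergence of maximizers. You instead run the compactness-and-squeeze argument by hand: first establish $S_E(\rho)\to S_\asy(\rho)$, then use the Jensen bound to trap $\mc G_\asy(\rho,\phi_{E_k})$ between $S_{E_k}(\rho)-o(1)$ and $S_\asy(\rho)$, forcing any subsequential limit $\phi^*$ to be a maximizer and hence equal to $\phi$. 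Your route is a touch more direct, since you only need the recovery inequality for smooth trial functions (to get $\varliminf S_E(\rho)\ge S_\asy(\rho)$) rather than for every $\varphi\in\overline{\mc F}_\asy$; the paper's $\Gamma$-convergence formulation requires the extra density step to handle general $\varphi$. Conversely, the paper's formulation makes the structure reusable and separates the analytic estimates from the abstract convergence-of-maximizers mechanism.
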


\section{The nonequilibrium free energy}
\label{s:5}

In this section we analyze the variational problem \eqref{ssg} and
prove items (i) and (ii) in Theorem~\ref{t:S=V}.  We start by proving
an existence and uniqueness result for the Euler-Lagrange equation
\eqref{Deq} together with a $C^1$ dependence of the solution with
respect to $\rho$.  We consider the space $C^1([-1,1])$ endowed with
the norm $\|f\|_{C^1} := \|f\|_\infty + \|f'\|_\infty$ where
$\|g\|_\infty:=\sup_{u\in[-1,1]}|g(u)|$. For each $E< E_0$ the set
$\mc F_E$ defined in \eqref{mcfe} is a convex subset of $C^1([-1,1])$;
we denote by $\upbar{\mc F}_E = \big\{ \varphi \in C^{1}([-1,1]) \,:\:
\varphi(\pm 1) =\varphi_\pm\,,\: \varphi' \ge 0\lor E \big\}$ its
closure in $C^1([-1,1])$.

\begin{theorem}
\label{t:deq} 
Let $E< E_0$. For each $\rho\in\mc M$ there exists in $\mc F_E$ a
unique solution to \eqref{Deq}, denoted by $\Phi(\rho)$.  Furthermore,
\begin{itemize}
\item[{(i)}]{If $\rho\in C([-1,1];[0,1])$ then $\Phi(\rho)\in
    C^2([-1,1])$.}
\item[{(ii)}]{Let $\{\rho^n\}\subset \mc M$ be a sequence converging to
    $\rho$ in $\mc M$. Then $\{\Phi(\rho^n)\}\subset \mc F_E$ converges to 
    $\Phi(\rho)$ in $C^1([-1,1])$.}
\end{itemize}
\end{theorem}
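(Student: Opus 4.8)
The plan is to read \eqref{Deq} as the Euler--Lagrange equation of the functional $\mc G_E(\rho,\cdot)$ defined in \eqref{Grf} and to exploit its concavity for uniqueness, while constructing the solution by a shooting argument based on an explicit first integral. Writing $g(s)=\frac1E[s\log s-(s-E)\log(s-E)]$ for the $\varphi'$--dependent integrand, a computation of the first variation of $\mc G_E(\rho,\cdot)$ in a direction vanishing at $\pm1$ gives, after one integration by parts, the stationarity condition $\frac{d}{du}g'(\varphi')=\frac1{1+e^{\varphi}}-\rho$; inserting $g''(s)=-[s(s-E)]^{-1}$ this is exactly \eqref{Deq}. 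Since $g''<0$ on the admissible range of slopes and $-\log(1+e^{\varphi})$ is concave, $\varphi\mapsto\mc G_E(\rho,\varphi)$ is strictly concave on the convex set $\mc F_E$. Uniqueness is then immediate: two solutions $\varphi_1,\varphi_2\in\mc F_E$ would be distinct critical points, so $t\mapsto\mc G_E(\rho,(1-t)\varphi_1+t\varphi_2)$ would be strictly concave with vanishing derivative at $t=0$ and $t=1$, forcing $\varphi_1=\varphi_2$.

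For existence I would rewrite the equation for shooting. As any $\varphi\in\mc F_E$ is strictly increasing, use $\varphi$ itself as independent variable and set $w(\varphi):=\varphi'$; then $\varphi''=w\,w'$ and \eqref{Deq} becomes the \emph{linear} equation $w'/(w-E)=\tilde\rho(\varphi)-1/(1+e^{\varphi})$ with $\tilde\rho(\varphi)=\rho(u(\varphi))$, whose integration yields the first integral
\[
w(\varphi)-E \;=\; (w(\varphi_-)-E)\,
\exp\Big\{\int_{\varphi_-}^{\varphi}\!\big[\tilde\rho(s)-\tfrac1{1+e^{s}}\big]\,ds\Big\}\,.
\]
Given the shooting parameter $p:=\varphi'(-1)$ this determines the trajectory, the remaining constraint being the travel-time identity $\mc U(p):=\int_{\varphi_-}^{\varphi_+}d\varphi/w(\varphi)=2$ coming from $\varphi(\pm1)=\varphi_\pm$. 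The exponent is bounded on $[\varphi_-,\varphi_+]$, so for $p$ not too small the trajectory reaches $\varphi_+$ with $w$ staying $>0\lor E$; one then checks that $\mc U$ is continuous, diverges as $p$ decreases toward the stalling threshold, and tends to $0$ as $p\to\infty$, whence $\mc U(p)=2$ for some $p$ by the intermediate value theorem, producing $\Phi(\rho)\in\mc F_E$.

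The main obstacle is the \emph{uniform} a priori control of the slope. From the displayed first integral the exponential factor lies in $[e^{-M_1},e^{M_1}]$ with $M_1$ depending only on $\rho_\pm$ (since $\tilde\rho\in[0,1]$ and $1/(1+e^{s})$ is bounded on $[\varphi_-,\varphi_+]$), so $w-E$ is within the multiplicative factor $e^{\pm M_1}$ of $p-E$ throughout. Feeding this into $\int d\varphi/w=2$ pins $p$, and hence $w$, into a compact subinterval $[c_1,c_2]\subset(0\lor E,\infty)$ whose endpoints depend only on $E,\rho_\pm$ and \emph{not} on $\rho$. This is the estimate on which everything else rests; I expect the delicate point to be excluding slope degeneration ($\varphi'\downarrow 0\lor E$) and blow-up simultaneously, which the multiplicative first integral together with the fixed travel time $2$ is precisely designed to do.

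Items (i) and (ii) then follow. For (i), once $\varphi'\in[c_1,c_2]$ stays away from $0\lor E$ and $\rho$ is continuous, the right-hand side of \eqref{Deq} is continuous, so $\varphi''=\varphi'(\varphi'-E)(\rho-1/(1+e^{\varphi}))$ is continuous and $\Phi(\rho)\in C^2$. For (ii), the uniform bounds give $|\Phi(\rho^n)''|\le c_2(c_2+|E|)$, so $\{\Phi(\rho^n)\}$ is bounded in $C^{1+1}$ and precompact in $C^1([-1,1])$ by Arzel\`a--Ascoli. I would pass to the limit in the integrated form $g'(\varphi_n'(u))=g'(\varphi_n'(-1))+\int_{-1}^u[1/(1+e^{\varphi_n})-\rho^n]$: the convergence $\int_{-1}^u\rho^n\to\int_{-1}^u\rho$ follows from $\rho^n\to\rho$ in $\mc M$ by approximating $\mb 1_{[-1,u]}$ with continuous functions and using $0\le\rho^n\le1$, while the other terms converge along a subsequence by the uniform convergence of $\varphi_n,\varphi_n'$ and continuity of $g'$ on $[c_1,c_2]$. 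Any $C^1$ limit point thus solves \eqref{Deq} with datum $\rho$, hence equals $\Phi(\rho)$ by uniqueness; precompactness together with a unique limit point gives $\Phi(\rho^n)\to\Phi(\rho)$ in $C^1$.
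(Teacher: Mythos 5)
Your uniqueness argument is correct and genuinely different from the paper's: the paper proves uniqueness by an ODE comparison based on the integrated identity \eqref{u2} (a first--crossing--point argument for two putative solutions), while you exploit the strict concavity of $\mc G_E(\rho,\cdot)$ on the convex set $\mc F_E$ together with the observation that any solution of \eqref{Deq} in $\mc F_E$ is a critical point. This is legitimate (the integration by parts is justified because $g'(\varphi')$ is Lipschitz for $\varphi\in\mc F_E$) and arguably cleaner; the paper invokes the same concavity only later, to identify the maximizer in Theorem~\ref{t:S=V}~(ii). Items (i) and (ii) would also go through along your lines once uniform slope bounds are available.

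The existence part, however, has a genuine gap, and it sits exactly where you say the whole argument rests. Your first integral controls $w-E=\varphi'-E$ multiplicatively. For $0<E<E_0$ the constraint is $\varphi'>E$, and multiplicative control of $\varphi'-E$ does keep the trajectory away from the constraint; but for $E<0$ the constraint is $\varphi'>0$, and $\varphi'-E=\varphi'+|E|$ is trivially bounded below by $|E|$, so controlling it multiplicatively says nothing about $\varphi'$ approaching $0$. Concretely, combining $w\in[E+(p-E)e^{-M_1},\,E+(p-E)e^{M_1}]$ with the travel--time identity yields an upper bound on $p$, but only a lower bound of the form $p\ge E(1-e^{-M_1})+E_0e^{-M_1}$, which is negative, hence vacuous, as soon as $|E|\ge E_0/(e^{M_1}-1)$ --- in particular throughout the regime $E\to-\infty$ that motivates the paper. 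So the claimed confinement of $w$ in a compact subinterval of $(0\lor E,\infty)$ does not follow. The quantity that does the job is the one integrated in \eqref{u2}, namely $E^{-1}\log[(\varphi'-E)/\varphi']$: its $u$-derivative is $\rho-1/(1+e^{\varphi})\in[-1,1]$, so it has total variation at most $2$, and since $\int_{-1}^1\varphi'\,du=\varphi_+-\varphi_-$ forces $\varphi'(u_0)=E_0$ somewhere, the monotonicity of $z\mapsto E^{-1}\log[(z-E)/z]$ pins $\varphi'$ away from $0\lor E$; your multiplicative integral is still the right tool for the large-$p$ asymptotics $\mc U(p)\to 0$. A secondary issue: with $\rho$ only in $L_\infty$, the shooting must be set up in the $u$ variable, where \eqref{Deq} is a Carath\'eodory ODE; parametrized by $\varphi$, the coefficient $\tilde\rho(\varphi)=\rho(u(\varphi))$ depends on the unknown trajectory, so your ``linear equation'' is an identity along a solution rather than a construction. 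By contrast, the paper avoids any separate a priori estimate by building the bounds into the Schauder fixed--point operators $\mc K^{(1)}_\rho$, $\mc K^{(2)}_\rho$, whose images automatically satisfy \eqref{bk1} and \eqref{bk2}.
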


\begin{proof}
The proof is divided in several steps.

\noindent{\sl Existence of solutions.} 
For $E\le 0$, resp.\ $E\in (0,E_0)$, we formulate \eqref{Deq} as an
integral-differential equation informally obtained multiplying
\eqref{Deq} by $\varphi' -E$, resp.\ by $\varphi'$, and integrating
the resulting equation. Existence of solutions will be deduced from
Schauder fixed point theorem.

Given $E< E_0$, $\rho\in \mc M$, and $\varphi\in \upbar{\mc F}_E$,
let
\begin{eqnarray*}
  \mc R^{(1)}(\rho,\varphi;u) & := & 
  \Big[ \rho - \frac{1}{1+e^{\varphi(u)}}\Big] \big[ \varphi'(u) -E
  \big]\;, 
  \\
  \mc R^{(2)}(\rho,\varphi;u) & := & 
  \Big[ \rho - \frac{1}{1+e^{\varphi(u)}}\Big] \varphi'(u) \;.
\end{eqnarray*}
For a fixed $\rho\in\mc M$ and $i=1,2$ we define the
integral-differential operators $\mc K^{(i)}_\rho :\upbar{\mc F}_E \to
C^1([-1,1])$ by
\begin{eqnarray*}
\mc K^{(1)}_\rho (\varphi)\, (u) 
& := &  \varphi_- + \big(\varphi_+-\varphi_-\big)\:
\frac{ 
\displaystyle
\int_{-1}^u \!dv \, 
\exp\left\{ \int_{-1}^v\!dw \, \mc R^{(1)} (\rho,\varphi;w) \right\}
}
{\displaystyle
\int_{-1}^1 \!dv \, 
\exp\left\{ \int_{-1}^v\!dw \, \mc R^{(1)} (\rho,\varphi;w)
\right\}}\;,
\\
&&\phantom{MM}
\\
\mc K^{(2)}_\rho (\varphi)\, (u) 
& := & \varphi_- + E(u+1) \\ 
&&
+ \; 
\big(\varphi_+-\varphi_- -2 E\big) \:
\frac{ 
\displaystyle
\int_{-1}^u \!dv \, 
\exp\left\{ \int_{-1}^v\!dw \, \mc R^{(2)} (\rho,\varphi;w) \right\}
}
{\displaystyle
\int_{-1}^1 \!dv \, 
\exp\left\{ \int_{-1}^v\!dw \, \mc R^{(2)} (\rho,\varphi;w)
\right\}}\;\cdot
\end{eqnarray*}
For $E\leq 0$, resp.\ $E\in (0,E_0)$, we formulate the boundary
problem \eqref{Deq} as a fixed point on $\upbar{\mc F}_E$ for the
operator $\mc K_\rho^{(1)}$, resp.\ $\mc K_\rho^{(2)}$.

Consider first the case $E\le 0$ corresponding to $i=1$.  Simple
computations show that for each $\rho\in\mc M$ the map $\mc
K_\rho^{(1)}$ is a continuous on $\upbar{\mc F}_E$ and $\mc
K_\rho^{(1)}\big(\upbar{\mc F}_E\big)\subset\upbar{\mc F}_E$.  It is
also straightforward to check that there exists a constant
$C_1=C_1(\varphi_-,\varphi_+,E)\in (0,\infty)$ such that for any
$\rho\in\mc M$, $\varphi\in\mc F_E$, and $u$, $v\in[-1,1]$,
\begin{equation}
\label{bk1}
\frac 1 C_1 \;\le\; \frac{d}{du} \mc K_\rho^{(1)}(\varphi) \, (u) 
\; \le \; C_1\;, \quad  
\Big|  \frac{d}{dv} \mc K_\rho^{(1)}(\varphi) \, (v) -
\frac{d}{du} \mc K_\rho^{(1)}(\varphi) \, (u) \Big| 
\; \le\;  C_1 \, |u-v| \;.
\end{equation}
In particular $\mc K_\rho^{(1)}\big(\upbar{\mc F}_E\big)\subset\mc
F_E$.  Notice that $\upbar{\mc F}_E$ is a closed convex subset of
$C^1([-1,1])$ and, by the previous bounds and Ascoli-Arzel\`a theorem,
$\mc{K}_\rho^{(1)}\big(\upbar{\mc F}_E\big)$ has compact closure in
$C^1([-1,1])$.  By Schauder fixed point theorem we get that for each
$\rho\in\mc M$ there exists $\varphi^*\in\upbar{\mc F}_E$ such that
$\mc{K}_\rho^{(1)}(\varphi^*)=\varphi^*$. From \eqref{bk1} it follows
that $\varphi^*\in\mc F_E$ and standard manipulations show that
$\varphi^*$ satisfies \eqref{Deq} Lebesgue a.e.

The case $E\in(0,E_0)$, corresponding to a fixed point for $\mc
K_\rho^{(2)}$, is analyzed in the same way.  In this case, it is
indeed straightforward to check that there exists a constant
$C_2=C_2(\varphi_-,\varphi_+,E)\in (0,\infty)$ such that for any
$\rho\in\mc M$, $\varphi\in\mc F_E$, and $u\in[-1,1]$,
\begin{equation}
\label{bk2}
\frac 1 C_2 \;\le\; \frac{d}{du} \mc K_\rho^{(2)}(\varphi) \, (u) -E 
\;\le\; C_2 \;, \quad
\Big| \frac{d}{dv} \mc K_\rho^{(2)}(\varphi) \, (v) 
- \frac{d}{du} \mc K_\rho^{(2)}(\varphi) \, (u) \Big| 
\;\le C_2\, |u-v| \;.
\end{equation}

\noindent{\sl Uniqueness of solutions.}
Let $\phi\in\mc F_E$, $E\not = 0$, be a solution to \eqref{Deq}; by
chain rule the equation
\begin{equation*}
\Big[ \frac{1}{E} \log \frac{\phi'-E}{\phi'} \Big]'
\;\equiv\;
\frac{\phi''}{\phi'(\phi'-E)} \;=\; \rho -\frac{1}{1+e^\phi}
\end{equation*}
holds Lebesgue a.e. Hence, for each $u\in[-1,1]$,
\begin{equation}
\label{u2}
\frac{1}{E} \log \frac{\phi'(u)-E}{\phi'(u)} 
= \frac{1}{E} \log \frac{\phi'(-1)-E}{\phi'(-1)} 
+\int_{-1}^u\!dv \: 
\Big[ \rho(v) -\frac{1}{1+e^{\phi(v)}}\Big]\;.
\end{equation}

Let $\phi_1,\phi_2\in \mc F_E$ be two solutions to \eqref{Deq}.  If
$\phi_1'(-1)=\phi_2'(-1)$ an application of Gronwall inequality in
\eqref{u2} yields $\phi_1=\phi_2$.  We next assume $\phi_1'(-1)<
\phi_2'(-1)$ and deduce a contradiction.  Recall that $\phi_i'>0\lor
E$ and let $\upbar u := \inf\{ v\in (-1,1] \,:\: \phi_1(v) = \phi_2(v)
\}$ which belongs to $(-1,1]$ because $\phi_1(\pm 1)= \phi_2(\pm 1 )$
and $\phi_1'(-1)<\phi_2'(-1)$.  By definition of $\upbar u$,
$\phi_1(u) < \phi_2(u)$ for any $u\in (-1,\upbar u)$,
$\phi_1(\upbar{u})=\phi_2(\upbar{u})$ and
$\phi_1^\prime(\upbar{u})\geq \phi_2^\prime(\upbar{u})$.  Note that
the real function $(0\lor E, \infty)\ni z\mapsto E^{-1} \log[(z-E)/z]$
is strictly increasing. Therefore from \eqref{u2} we obtain
$\phi_1'(\upbar u) < \phi_2'(\upbar u)$, which is a contradiction and
concludes the proof of the uniqueness.

The case $E=0$ can be treated similarly, with $- ( 1/\phi')'$ in place
of $\{(1/E) \log [ (\phi' - E)/\phi'] \}'$, and was examined in
\cite{bdgjl4}.

\noindent{\sl Claims (i) and (ii).}
Claim (i) follows straightforwardly from the previous analysis. To
prove (ii), let $\phi^n:=\Phi(\rho^n)\in \mc F_E$.  By \eqref{bk1},
\eqref{bk2} and Ascoli-Arzel\`a theorem, the sequence
$\{\phi^n\}\subset \mc F_E$ is precompact in $C^1([-1,1])$.  It
remains to show uniqueness of its limit points.  Consider a
subsequence $n_j$ and assume that $\{\phi^{n_j}\}$ converges to $\psi$
in $C^1([-1,1])$.  Since $\{\rho^{n_j}\}$ converges to $\rho$ in $\mc
M$ and $\{\phi^{n_j}\}$ converges to $\psi$ in $C^1([-1,1])$, for
$E\le 0$, resp.\ for $E\in(0,E_0)$, we have that $\mc
K_{\rho^{n_j}}^{(1)} (\phi^{n_j})$ converges to $\mc K^{(1)}_{\rho}
(\psi)$, resp.\ $\mc K_{\rho^{n_j}}^{(2)} (\phi^{n_j})$ converges to
$\mc K^{(2)}_{\rho} (\psi)$.  In particular, $\psi = \lim_{j}
\phi^{n_j} = \lim_{j} \mc K^{(i)}_{\rho^{n_j}} (\phi^{n_j}) = \mc
K^{(i)}_{\rho} (\psi)$ for $i=1$, $2$. By the uniqueness result,
$\psi=\Phi(\rho)$.  This shows that $\Phi(\rho)$ is the unique
possible limit point of the sequence $\{\phi^n\}$, and concludes the
proof of Claim (ii).
\end{proof}

Fix a path $\rho\equiv \rho_t(u) \in C^{1,0}\big([0,T]\times
[-1,1];[0,1]\big)$ and let $\phi\equiv \Phi(\rho_t) (u)$ be the
solution to \eqref{Deq}. By Theorem \ref{t:deq}, $\phi$ belongs to
$C^{1,2}\big([0,T]\times [-1,1]\big)$.  Note also that, by \eqref{bk1}
and \eqref{bk2}, for each $E< E_0$ there exists a constant $C\in
(0,\infty)$ such that for any $(t,u) \in [0,T]\times[-1,1]$
\begin{equation}
\label{sf'}
\left\{
\begin{array}{l}
{\displaystyle
C^{-1} \;\leq\;  \nabla \phi_t (u) \;\leq\; C
\quad \text{if $E\le 0$},} \\
{\displaystyle
C^{-1} \;\leq\;  \nabla \phi_t (u) -E \;\leq\; C
\quad \text{if $0<E<E_0$}\;.}
\end{array}
\right.
\end{equation}

\begin{lemma}
\label{t:ddeq}
Let $E< E_0$, $T>0$, $\rho\in C^{1,0}\big([0,T]\times
[-1,1];[0,1]\big)$, and $\phi := \Phi(\rho_t)$ be the solution to
\eqref{Deq}.  Then $\phi \in C^{1,2}\big( [0,T]\times [-1,1] \big)$
and $\psi := \partial_t \phi $ is the unique classical solution to the
linear boundary value problem
\begin{equation}
\label{dDeqf}
\begin{cases}
{\displaystyle  
\nabla\Big[ 
\frac{ \nabla \psi_t }{ \nabla \phi_t (\nabla \phi_t -E) }
\Big]
- \frac{e^{\phi_t }}{ \big( 1+ e^{\phi_t} \big)^2} \, \psi_t 
= \partial_t \rho_t
}
& \quad (t,u) \in [0,T]\times (-1,1) \quad\\
\psi_t(\pm 1) = 0 &\quad  t\in [0,T]\;.
\end{cases}
\end{equation}
\end{lemma}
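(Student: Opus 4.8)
The plan is to differentiate the Euler--Lagrange equation \eqref{Deq} in time and verify that $\psi := \partial_t \phi$ satisfies the claimed linear problem, then establish existence, uniqueness, and the requisite regularity of classical solutions to \eqref{dDeqf}. Since Theorem~\ref{t:deq}(i) already gives $\phi_t \in C^2([-1,1])$ for each fixed $t$ (because $\rho_t \in C([-1,1];[0,1])$), the first task is to upgrade this to joint regularity $\phi \in C^{1,2}([0,T]\times[-1,1])$. For this I would exploit the explicit fixed-point representation underlying Theorem~\ref{t:deq}: $\phi$ is the fixed point of $\mc K_\rho^{(i)}$, whose definition involves integrals of $\rho$ and $\phi$. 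Since $\rho \in C^{1,0}$, the map $t \mapsto \rho_t$ is $C^1$ into $C([-1,1])$, and the contraction/uniqueness structure together with the uniform bounds \eqref{sf'} should give that $t\mapsto \phi_t$ is $C^1$ into $C^1([-1,1])$; combined with the pointwise second-derivative control this yields the stated joint regularity. I would frame this by differentiating the fixed-point identity $\phi = \mc K_\rho^{(i)}(\phi)$ formally in $t$ and checking the resulting expression is continuous.

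The key step is the formal time-differentiation of \eqref{Deq}. Writing \eqref{Deq} as
\begin{equation*}
\frac{\nabla^2 \phi_t}{\nabla\phi_t \,(\nabla\phi_t - E)} + \frac{1}{1+e^{\phi_t}} = \rho_t,
\end{equation*}
I observe (as in the uniqueness argument of Theorem~\ref{t:deq}) that the first term equals $\nabla\big[ E^{-1}\log((\nabla\phi_t - E)/\nabla\phi_t)\big]$. Differentiating the whole equation with respect to $t$ and interchanging $\partial_t$ with $\nabla$, the chain rule gives for the first term
\begin{equation*}
\partial_t \nabla\Big[ \tfrac{1}{E}\log\tfrac{\nabla\phi_t - E}{\nabla\phi_t}\Big]
= \nabla\Big[ \frac{\nabla\psi_t}{\nabla\phi_t\,(\nabla\phi_t - E)}\Big],
\end{equation*}
since $\partial_t \tfrac1E \log((z-E)/z)$ with $z=\nabla\phi_t$ yields $\nabla\psi_t / (\nabla\phi_t(\nabla\phi_t - E))$. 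Differentiating the second term $1/(1+e^{\phi_t})$ gives $-e^{\phi_t}(1+e^{\phi_t})^{-2}\psi_t$, and the right side becomes $\partial_t\rho_t$. This reproduces exactly the equation in \eqref{dDeqf}. The boundary conditions $\psi_t(\pm1)=0$ follow immediately from $\phi_t(\pm1)=\varphi_\pm$ being independent of $t$.

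It remains to argue that $\psi=\partial_t\phi$ is genuinely a classical solution and that \eqref{dDeqf} has a unique such solution. Existence and regularity of $\psi$ as a classical solution follow from the joint regularity established in the first step together with \eqref{sf'}, which guarantees the coefficient $1/(\nabla\phi_t(\nabla\phi_t - E))$ is bounded, strictly positive (since the denominator has a definite sign), and Lipschitz, so that \eqref{dDeqf} is a nondegenerate elliptic (Sturm--Liouville type) problem in $u$ with bounded right-hand side $\partial_t\rho_t \in C([0,T]\times[-1,1])$. For uniqueness I would use the maximum principle: the operator $\nabla[a_t \nabla \psi] - b_t \psi$ with $a_t := 1/(\nabla\phi_t(\nabla\phi_t-E)) > 0$ and $b_t := e^{\phi_t}(1+e^{\phi_t})^{-2} > 0$ is of the form $L\psi - b_t\psi$ with $L$ uniformly elliptic and $b_t \ge 0$; the difference of two solutions satisfies the homogeneous equation with zero boundary data, and multiplying by the difference and integrating (or invoking the standard maximum principle for such operators) forces it to vanish.

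The main obstacle I anticipate is the first step: rigorously upgrading the fixed-point solution from fiberwise $C^2$ regularity in $u$ to joint $C^{1,2}$ regularity and, in particular, justifying the interchange $\partial_t \nabla = \nabla \partial_t$ that underpins the formal computation. This requires showing $t\mapsto\phi_t$ is $C^1$ into $C^1([-1,1])$, which I would obtain either by differentiating the fixed-point equation $\phi = \mc K_\rho^{(i)}(\phi)$ in $t$ and using that $I - D_\phi\mc K_\rho^{(i)}$ is invertible (an implicit-function-theorem argument made possible by the uniform bounds \eqref{bk1}--\eqref{bk2} and the contraction structure), or by a direct difference-quotient estimate showing $(\phi_{t+h}-\phi_t)/h$ converges in $C^1$ as $h\to0$. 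Once this joint smoothness is in hand, all subsequent steps are routine.
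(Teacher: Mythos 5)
Your formal time-differentiation of \eqref{Deq} (rewriting the second-order term as $\nabla\big[E^{-1}\log((\nabla\phi_t-E)/\nabla\phi_t)\big]$ and applying the chain rule), the boundary conditions for $\psi$, and the uniqueness argument for \eqref{dDeqf} via the energy method or maximum principle are all correct and consistent with what the paper does. The problem is precisely the step you flag as the ``main obstacle'': you do not close it, and the primary mechanism you propose for closing it does not exist in this setting. Theorem~\ref{t:deq} constructs $\Phi(\rho)$ by the \emph{Schauder} fixed point theorem, not by a contraction; no contraction structure for $\mc K^{(i)}_\rho$ is ever established, so the phrase ``contraction/uniqueness structure'' and the claim that $I-D_\phi\mc K^{(i)}_\rho$ is invertible ``by the contraction structure'' are unsupported. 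Invertibility of $I-D_\phi\mc K^{(i)}_\rho$ is essentially equivalent to unique solvability of the linearized problem \eqref{dDeqf} itself, so an implicit-function-theorem route would still have to prove the very elliptic estimate that constitutes the heart of the lemma; as written, your plan assumes what needs to be shown.

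The paper closes this gap by the second alternative you mention only in passing: it works directly with the difference quotients $\psi^h_t=[\phi_{t+h}-\phi_t]/h$, which are $C^2$ in $u$ by Theorem~\ref{t:deq}(i), and derives the exact second-order equation \eqref{dDeqfh} they satisfy. Multiplying by $\psi^h_t$, integrating by parts, and using the monotonicity $x(e^x-1)\ge 0$ to discard the zero-order term yields the a priori bound \eqref{pei}; the coefficient $F(\phi_t,\phi_{t+h})$ of the first-order term is shown to vanish in $L^\infty$ as $h\to 0$ by combining Theorem~\ref{t:deq}(ii) (continuity of $\rho\mapsto\Phi(\rho)$ in $C^1$) with the equation \eqref{Deq} itself to upgrade $C^1$-convergence of $\phi_{t+h}$ to $C^2$-convergence. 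Together with \eqref{sf'}, Cauchy--Schwarz, and the Poincar\'e inequality this gives a uniform $H^1_0$ bound on $\{\psi^h_t\}$, hence precompactness in $C([-1,1])$; passing to the limit in \eqref{dDeqfh} shows every limit point is a weak solution of \eqref{dDeqf}, and uniqueness of weak solutions to this one-dimensional elliptic problem identifies the limit and simultaneously proves that $\partial_t\phi$ exists and is the classical solution. If you replace your implicit-function-theorem sketch with this difference-quotient/energy-estimate argument, your proof becomes complete; without it, the existence of $\partial_t\phi$ --- and hence the justification of the interchange $\partial_t\nabla=\nabla\partial_t$ --- remains unproved.
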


\begin{proof}
Fix $t\in [0,T]$. For $h\neq 0$ such that $t+h\in [0,T]$ define
$\psi^h_t(\cdot)$ by $\psi^h_t(u) := \big[\phi_{t+h}(u) -
\phi_t(u)\big]/h$.  By Theorem \ref{t:deq} (i), $\psi^h_t(\cdot)$
belongs to $C^{2}([-1,1])$. Set $R^h_t := [\rho_{t+h}-\rho_t]/h$; from
\eqref{Deq} it follows that $\psi^h$ solves
\begin{eqnarray}
\label{dDeqfh}
&&\frac{\Delta \psi^h _t}{\nabla \phi_t (\nabla \phi_t-E)}
\; -\; \frac{\Delta \phi _{t+h} 
\big(\nabla \phi_t+\nabla \phi_{t+h} -E \big)}
{\nabla \phi_t (\nabla \phi_t-E) \nabla \phi_{t+h} 
(\nabla \phi_{t+h}-E)} \, \nabla \psi^h_t \\
&&\qquad\qquad\qquad\qquad\qquad\qquad
- \; \frac{e^{\phi_t}}
{\big( 1+ e^{\phi_t} \big) \big( 1+ e^{\phi_{t+h}}\big)} 
\: \frac{e^{h\,\psi^h_t}-1}{h} \;= \; R^h_t
\nonumber 
\end{eqnarray}
for $(t,u) \in [0,T]\times(-1,1)$ with the boundary conditions 
$\psi^h_t(\pm 1) = 0$,  $t\in [0,T]$.

Multiplying the above equation by $\psi^h_t$ and integrating in $du$,
using the inequality $x(e^x-1)\geq 0$ and an integration by parts we
get that
\begin{equation}
\label{pei}
\Big\langle \nabla \psi^h _t, 
\frac{\nabla \psi^h _t}{\nabla \phi _t (\nabla \phi_t- E)}
\Big\rangle \;\leq\;
-\; \langle \psi^h _t, R^h_t\rangle \;+\;
\langle \psi^h _t, F(\phi_t,\phi_{t+h})\nabla \psi^h _t \rangle 
\end{equation}
where 
\begin{eqnarray*}
\!\!\!\!\!\!\!\!\!\!\!\!\! &&
F(\phi_t,\phi_{t+h}) \;:= \; 
\frac{1}{(\nabla \phi_t)^2(\nabla \phi_t-E)^2
\nabla \phi_{t+h}(\nabla \phi_{t+h} -E)} \; \times\\
\!\!\!\!\!\!\!\!\!\!\!\!\! &&
\qquad
\times \; \Big\{ \Delta \phi_t \nabla \phi_{t+h} 
(\nabla \phi_{t+h} -E)(2\nabla \phi_t -E) \\
\!\!\!\!\!\!\!\!\!\!\!\!\! &&
\qquad\qquad\qquad\qquad\qquad\qquad\qquad
-\; \Delta \phi_{t+h} \nabla \phi_t (\nabla \phi_t -E)
(\nabla \phi_{t+h}+ \nabla \phi_t -E) \Big\}\;.
\end{eqnarray*}

For each $t\in[0,T]$,
\begin{equation}
\label{Fto0}
\lim_{h\to 0} \| F(\phi_t,\phi_{t+h}) \|_\infty \;=\; 0\;.
\end{equation}
Indeed, since $\rho\in C^{1,0}\big([0,T]\times [-1,1]\big)$, as $h\to
0$, $\rho_{t+h}(\cdot)\to \rho_t(\cdot)$ in $C([-1,1])$. By
Theorem~\ref{t:deq} (ii), $\phi_{t+h}(\cdot)\to \phi_t(\cdot)$ in
$C^1([-1,1])$. By the differential equation \eqref{Deq},
$\phi_{t+h}(\cdot)\to \phi_t(\cdot)$ in $C^2([-1,1])$. Together with
\eqref{sf'} this concludes the proof of \eqref{Fto0}.

By \eqref{sf'}, Cauchy-Schwarz, and Poincar\'e inequality for the
Dirichlet Laplacian in $[-1,1]$, we obtain from \eqref{pei} that
\begin{eqnarray}
\label{csp}
&& \!\!\!\!\!\!\!\!\!\!\!\!\! 
\frac{1}{C^2} \,
\langle \nabla \psi^h _t, \nabla \psi^h _t \rangle
\;\leq \;
\Big\langle \nabla \psi^h _t, 
\frac{\nabla \psi^h _t}{\nabla \phi_t (\nabla \phi_t - E)}
\Big\rangle \\ 
\nonumber &&
\qquad \leq \; 
\langle\psi^h _t,\psi^h _t \rangle^{1/2} \, \Big[
\langle R^h _t, R^h _t \rangle^{1/2}
\;+\; \| F(\phi_t,\phi_{t+h}) \|_\infty
\langle \nabla \psi^h _t, \nabla \psi^h _t \rangle^{1/2} \Big] \\ 
\nonumber &&
\qquad \leq \; C'\,
\langle\nabla\psi^h _t,\nabla\psi^h _t \rangle^{1/2}\,
\Big[ \langle R^h _t, R^h _t \rangle^{1/2}
+ \| F(\phi_t,\phi_{t+h}) \|_\infty
\langle \nabla \psi^h _t, \nabla \psi^h _t \rangle^{1/2} \Big]
\end{eqnarray}
for some constant $C'>0$.

From \eqref{csp} and \eqref{Fto0} it follows that there exists a
constant $C''>0$ such that
\begin{equation}
\label{ubh1}
\varlimsup_{h\to 0}
\langle \nabla \psi^h _t, \nabla \psi^h _t \rangle
\;\leq\; C'' \, 
\langle \partial_t \rho _t, \partial_t \rho _t \rangle\,,
\quad t\in[0,T]\;.
\end{equation}
Therefore for each $t\in[0,T]$ the sequence $\{\psi^h_t(\cdot)\}$ is
precompact in $C([-1,1])$.  By taking the limit $h\to 0$ in
\eqref{dDeqfh} and using \eqref{Fto0}, it is now easy to show that any
limit point of $\{\psi^h_t(\cdot)\}$ is a weak solution to
\eqref{dDeqf}. By the classical theory on one-dimensional elliptic
problems, see e.g.\ \cite[IV, \S2.1]{M}, there exists a unique weak
solution to \eqref{dDeqf} which is in fact the classical solution
because $\partial_t \rho_t(\cdot)$ belongs to $C([-1,1])$.  This
implies that there exists a unique limit point $\psi_t(\cdot)\in
C^2\big([-1,1]\big)$.  Finally $\psi\in C^{0,2}\big([0,T]
\times[-1,1]\big)$ by the continuous dependence in the $C^2([-1,1])$
topology of the solution to \eqref{dDeqf} w.r.t.\ $\partial_t
\rho_t(\cdot)$ in the $C([-1,1])$ topology.
\end{proof}

We are now in a position to prove two statements of the first main
result of this article.

\begin{proof}[Proof of Theorem~\ref{t:S=V} (i) and (ii)]
We start with Claim (i). The case $E=E_0$ follows from the definition
\eqref{f07} of the functional $S_{E_0}$. Assume $E<E_0$.  By the
convexity of the map $\chi(\rho) = \rho\log\rho +
(1-\rho)\log(1-\rho)$, for each $\varphi\in \mc F_E$ the functional
$\mc G_E (\cdot,\varphi)$ is convex and lower semicontinuous on $\mc
M$.  Hence, by \eqref{ssg}, the functional $S_E$, being the supremum
of convex lower semicontinuous functionals, is a convex lower
semicontinuous functional on $\mc M$.  On the other hand, since the
real function $(0\lor E,\infty) \ni x \mapsto \big[x\log x -
(x-E)\log(x-E)\big]/E$ is strictly concave, Jensen inequality and
$\varphi(\pm 1) = \varphi_\pm$ imply that $\mc{G}_E(\rho,\varphi)$ is
bounded by some constant depending only on $\varphi_\pm$ and $E$.
This proves (i).

Fix $\rho\in\mc{M}$. The strict concavity mentioned above and the
strict concavity of the real function $\bb R\ni x\mapsto
-\log\big(1+e^{x}\big)$ yield that the functional
$\mc{G}_E(\rho,\cdot)$ is strictly concave on $\mc{F}_E$. Thanks to
Theorem~\ref{t:deq}, it easily follows that the supremum on the
r.h.s.\ of \eqref{ssg} is uniquely attained when $\varphi=
\Phi(\rho)$.
\end{proof}

In the proof of the equality between the quasi-potential $V_E$ and the
functional $S_E$, we shall need the following simple observation. 

\begin{lemma}
\label{t:Sden}
For each $\rho\in\mc M$ there exists a sequence $\{\rho^n\}\subset \mc
M$ converging to $\rho$ in $\mc M$ and such that: $\rho^n\in C^2([-1,1])$,
$\rho^n(\pm 1)=\rho_\pm$, $0<\rho^n<1$, $S_E(\rho^n)\to S_E(\rho)$.
\end{lemma}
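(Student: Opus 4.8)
The plan is to combine the lower semicontinuity and convexity of $S_E$ from Theorem~\ref{t:S=V}~(i) with a two–parameter approximation: a convex combination that pushes $\rho$ into the interior of $[0,1]$, followed by a mollification, with a final local adjustment near $\pm1$ to restore the boundary values. Since $S_E$ is lower semicontinuous on $\mc M$, any sequence $\rho^n\to\rho$ in $\mc M$ automatically satisfies $\varliminf_n S_E(\rho^n)\ge S_E(\rho)$. Hence it suffices to produce admissible approximants with $\varlimsup_n S_E(\rho^n)\le S_E(\rho)$, and the whole difficulty is concentrated in this upper bound.

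\emph{Interior push.} For $\lambda\in(0,1)$ set $\rho_\lambda:=(1-\lambda)\rho+\lambda\,\upbar{\rho}_E$. Since $\upbar\rho_E$ is continuous and takes values in $[\rho_-,\rho_+]\subset(0,1)$, the profile $\rho_\lambda$ takes values in $[\delta_\lambda,1-\delta_\lambda]$ with $\delta_\lambda:=\lambda\min\{\rho_-,1-\rho_+\}>0$, and $\rho_\lambda\to\rho$ in $L^\infty$, hence in $\mc M$, as $\lambda\downarrow0$. Because $\upbar\varphi_E$ solves \eqref{Deq} for $\rho=\upbar\rho_E$, uniqueness in Theorem~\ref{t:deq} identifies $\Phi(\upbar\rho_E)=\upbar\varphi_E$, whence $S_E(\upbar\rho_E)=\mc G_E(\upbar\rho_E,\upbar\varphi_E)=0$ by Theorem~\ref{t:S=V}~(ii); convexity of $S_E$ then gives $S_E(\rho_\lambda)\le(1-\lambda)S_E(\rho)$. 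Together with lower semicontinuity this yields $\lim_{\lambda\downarrow0}S_E(\rho_\lambda)=S_E(\rho)$, so it remains, for each fixed $\lambda$, to approximate the interior profile $\rho_\lambda$ by admissible $C^2$ profiles without raising the energy in the limit; a diagonal extraction in $\lambda$ then finishes the proof.

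\emph{Mollification and boundary fix.} Extend $\rho_\lambda$ to $\bb R$ by $\rho_\pm$ outside $[-1,1]$, let $\hat\rho_{\lambda,n}$ be its mollification on scale $1/n$, and set $\rho_{\lambda,n}:=\chi_n\,\hat\rho_{\lambda,n}+(1-\chi_n)\,\upbar\rho_E$, where $\chi_n$ is a smooth cutoff equal to $1$ on $[-1+2/n,1-2/n]$ and to $0$ near $\pm1$. Then $\rho_{\lambda,n}\in C^\infty([-1,1])$; it is a pointwise convex combination of functions valued in $[\delta_\lambda,1-\delta_\lambda]$, so $0<\rho_{\lambda,n}<1$; and it coincides with $\upbar\rho_E$ near $\pm1$, so $\rho_{\lambda,n}(\pm1)=\rho_\pm$. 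By standard properties of mollification $\hat\rho_{\lambda,n}\to\rho_\lambda$ in $L^1([-1,1])$, and since the blending region has width $\le 4/n$ we get $\rho_{\lambda,n}\to\rho_\lambda$ in $L^1$, in particular in $\mc M$.

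\emph{Energy upper bound (the main point).} The delicate step is $\varlimsup_n S_E(\rho_{\lambda,n})\le S_E(\rho_\lambda)$. It does \emph{not} follow from $\mc M$–convergence: writing $h(a):=a\log a+(1-a)\log(1-a)$, the map $\rho\mapsto\int_{-1}^1 h(\rho)\,du$ is convex, hence only weakly lower semicontinuous, which is the wrong direction. The resolution exploits the \emph{strong} $L^1$ convergence of the mollification together with the uniform control of the optimizers furnished by \eqref{bk1}, \eqref{bk2} and \eqref{sf'}. Let $\varphi_{\lambda,n}:=\Phi(\rho_{\lambda,n})$, so that $S_E(\rho_{\lambda,n})=\mc G_E(\rho_{\lambda,n},\varphi_{\lambda,n})$ by Theorem~\ref{t:S=V}~(ii). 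As $\rho$ enters $\mc G_E$ only through $\int h(\rho)$ and $\int(1-\rho)\varphi$,
\[
S_E(\rho_{\lambda,n}) = \mc G_E(\rho_\lambda,\varphi_{\lambda,n}) + \int_{-1}^1 \big[h(\rho_{\lambda,n})-h(\rho_\lambda)\big]\,du + \int_{-1}^1 (\rho_\lambda-\rho_{\lambda,n})\,\varphi_{\lambda,n}\,du .
\]
The first term is at most $S_E(\rho_\lambda)$ by \eqref{ssg}. Since $h$ is Lipschitz on $[\delta_\lambda,1-\delta_\lambda]$ and all functions involved take values there, the second term is bounded by $L_\lambda\|\rho_{\lambda,n}-\rho_\lambda\|_{L^1}\to0$. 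For the third term, \eqref{bk1}–\eqref{bk2} (equivalently \eqref{sf'}) give $\|\nabla\varphi_{\lambda,n}\|_\infty\le C$ with $C$ depending only on $\varphi_\pm$ and $E$, uniformly in $\rho\in\mc M$; together with $\varphi_{\lambda,n}(-1)=\varphi_-$ this forces $\|\varphi_{\lambda,n}\|_\infty\le C'$ uniformly, so the term is bounded by $C'\|\rho_\lambda-\rho_{\lambda,n}\|_{L^1}\to0$. Hence $\varlimsup_n S_E(\rho_{\lambda,n})\le S_E(\rho_\lambda)$, and with lower semicontinuity $\lim_n S_E(\rho_{\lambda,n})=S_E(\rho_\lambda)$. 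A diagonal choice $\lambda_k\downarrow0$, $n_k\uparrow\infty$ then gives $\rho^k:=\rho_{\lambda_k,n_k}$ with all the required properties and $S_E(\rho^k)\to S_E(\rho)$. For $E=E_0$ the representation \eqref{f07} makes the argument more direct, as $S_{E_0}$ is a relative entropy continuous along $L^1$–convergent sequences bounded away from $0$ and $1$.
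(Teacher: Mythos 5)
Your proof is correct, but it establishes the key convergence $S_E(\rho^n)\to S_E(\rho)$ by a different mechanism than the paper. The paper's argument is much shorter: it takes any smooth admissible sequence converging to $\rho$ \emph{Lebesgue a.e.} (not merely in $\mc M$), invokes Theorem~\ref{t:deq}~(ii) to get $\Phi(\rho^n)\to\Phi(\rho)$ in $C^1([-1,1])$, and then concludes by dominated convergence applied directly to the integrand of $\mc G_E\big(\rho^n,\Phi(\rho^n)\big)$ — the a.e.\ convergence is precisely what neutralizes the obstruction you correctly identify, namely that the entropy term is only lower semicontinuous under weak convergence. You instead never use the continuity of the map $\rho\mapsto\Phi(\rho)$: you get the liminf inequality for free from lower semicontinuity, and for the limsup you combine (a) the convexity of $S_E$ together with $S_E(\upbar\rho_E)=0$ to control the interior push, and (b) the elementary variational bound $\mc G_E(\rho_\lambda,\varphi_{\lambda,n})\le S_E(\rho_\lambda)$ plus strong $L^1$ convergence with values confined to $[\delta_\lambda,1-\delta_\lambda]$ to control the mollification. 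Your route is longer but more robust: it needs only the qualitative facts in Theorem~\ref{t:S=V}~(i)--(ii) and a uniform sup bound on the optimizers (which already follows from $\varphi'>0$ and $\varphi(\pm1)=\varphi_\pm$, so the appeal to \eqref{bk1}--\eqref{bk2} is stronger than necessary), whereas the paper's route leans on the finer $C^1$ stability of the Euler--Lagrange solution but disposes of the lemma in two lines. Both arguments are complete.
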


\begin{proof}
For $E=E_0$, this is obvious from the definition of the functional
$S_{E_0}$. For $E<E_0$, given $\rho\in\mc M$, it is enough to consider
a sequence $\{\rho^n\}\subset C^2\big([-1,1]\big)$ with $\rho^n(\pm
1)=\rho_\pm$ and $0<\rho^n<1$, which converges to $\rho$ $du$ a.e. By
Theorem~\ref{t:S=V} (ii), Theorem~\ref{t:deq} (ii), and dominated
convergence, $S_E(\rho^n)=\mc{G}_E\big(\rho^n,\Phi(\rho^n)\big)
\longrightarrow \mc{G}_E\big(\rho,\Phi(\rho)\big)= S_E(\rho)$.
\end{proof}

\section{The quasi-potential}
\label{s:6}

In this section we characterize the optimal path for the variational
problem \eqref{qp} defining the quasi-potential $V_E$ and conclude the
proof of Theorem~\ref{t:S=V} by showing the equality $V_E=S_E$. The
heuristic argument is quite simple. To the variational problem
\eqref{qp} is associated the following Hamilton-Jacobi equation
\cite{bdgjl2,BDGJL7}. The quasi-potential $V_E$ is the maximal
solution to
\begin{equation}
\label{ham-jac0}
\frac 12\, \Big\langle \nabla \frac{\delta V_E}{\delta\rho} \,,\, 
\chi(\rho) \, \nabla \frac{\delta V_E}{\delta\rho} \Big\rangle
\;+\; \Big\langle\frac{\delta V_E}{\delta\rho} \,,\, \frac 12 
\, \Delta\rho  \;-\; \frac E2 \, \nabla \chi(\rho)  \Big\rangle 
\;=\; 0 
\end{equation}
with the boundary condition that $\delta V_E/ \delta\rho$ vanishes at
the endpoints of $[-1,1]$. Few formal computations show that $S_E$
solves \eqref{ham-jac0}. To check that $S_E$ is the maximal solution
one constructs a suitable path for the variational problem \eqref{qp},
\cite{bdgjl2,BDGJL7}. Since it is not clear how to analyze
\eqref{ham-jac0} directly, we first approximate, as in \cite{bdgjl4},
paths $\pi\in D([0,T];\mc M)$ with $I_T(\pi|\upbar\rho_E)<\infty$ by
smooth paths bounded away from $0$ and $1$ which satisfy the boundary
conditions $\rho_\pm$ at the endpoints of $[-1,1]$. For such smooth
paths we can make sense of \eqref{ham-jac0} and complete the proof.

In the case $E=E_0$, the process is reversible and the picture is well
known. The path which minimizes the variational formula defining the
quasi-potential is the solution of the hydrodynamic equation reversed
in time. The identity between $S_{E_0}$ and $V_{E_0}$ follows easily
from this principle. The proof presented below for $E<E_0$ can be
adapted with several simplifications. It is enough to set $\Phi(\rho)
= \log \{ \upbar\rho_{E_0}/ 1- \upbar\rho_{E_0}\}$ everywhere.

Assume from now on that $E<E_0$.  We first need to recall some
notation introduced in \cite{blm1}. Fix a density profile $\gamma :
[-1,1]\to [0,1]$ and a time $T>0$.  Denote by $\mc F_2 =\mc
F_2(T,\gamma, \rho_\pm)$ the set of trajectories $\pi$ in $C([0,T],
\mc M)$ bounded away from $0$ and $1$ in the sense that for each
$t>0$, there exists $\epsilon >0$ such that $\epsilon \le \pi\le
1-\epsilon$ on $[t,T]$; which satisfy the boundary conditions, $\pi_0
= \gamma$, $\pi_t(\pm 1) = \rho_\pm$, $0\le t\le T$; and for which
there exists $\delta_1$, $\delta_2>0$ such that $\pi_t$ follows the
hydrodynamic equation \eqref{eq:1} in the time interval
$[0,\delta_1]$, $\pi_t$ is constant in the time interval $[\delta_1,
\delta_1 + \delta_2]$ and $\pi_t$ is smooth in time in the time
interval $(\delta_1, T]$.

If the density profile $\gamma$ is the stationary profile
$\upbar\rho_E$, the trajectories $\pi$ in $\mc F_2$ are in fact
constant in the time interval $[0,\delta_1 + \delta_2]$. Since they
are also smooth in time in $(\delta_1, T]$, we deduce that they are
smooth in time in the all interval $[0,T]$. Moreover, since
$\upbar\rho_E$ is bounded away from $0$ and $1$, there exists
$\epsilon >0$ such that $\epsilon \le \pi\le 1-\epsilon$ on $[0,T]$. 

Assume that $\gamma = \upbar\rho_E$ and recall from the proof of
\cite[Theorem 4.6]{blm1} the definition of the sequence of
trajectories $\{\pi_\epsilon : \epsilon >0\}$. Since a path $\pi$ in
$\mc F_2$ is in fact constant in the time interval $[0,b]$, each 
$\pi_\epsilon$ is smooth in space and time. In particular, let
\begin{equation}
\label{dDd}
\mc D_0 \;:=\; C^{\infty,\infty} \big([0,T]\times[-1,1] \big) \cap 
\mc F_2 \;. 
\end{equation}
Theorem 4.6 in \cite{blm1} can be rephrased in the present context as

\begin{theorem}
\label{s01}
For each $\pi$ in $D([0,T], \mc M)$ such that
$I_T(\pi|\upbar\rho_E)<\infty$, there exists a sequence
$\{\pi^n\}\subset \mc D_0$ converging to $\pi$ in $D\big([0,T];\mc
M\big)$ such that $I_T(\pi^n|\upbar\rho_E)$ converges to
$I_T(\pi|\upbar\rho_E)$.
\end{theorem}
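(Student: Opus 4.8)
The plan is to treat this statement as the specialization, to the stationary initial profile $\gamma=\upbar\rho_E$, of the general approximation result \cite[Theorem 4.6]{blm1}; accordingly I would recall the regularization scheme behind that result and then verify that the additional regularity needed to land in $\mc D_0$ is automatic for this particular initial datum. Since $I_T(\pi|\upbar\rho_E)<\infty$ forces $\mc Q(\pi)<\infty$ and $\pi_0=\upbar\rho_E$, the starting path already has a generalized space derivative and the correct initial condition. From it one constructs the family $\{\pi_\epsilon\}\subset\mc F_2$ by the usual several-step regularization: (i) prepend a short segment of the hydrodynamic flow \eqref{eq:1}, which leaves $\upbar\rho_E$ and regularizes the profile in space via the parabolic smoothing of the viscous Burgers equation; (ii) form the convex combination $(1-\epsilon)\pi+\epsilon\,\upbar\rho_E$ to push the trajectory into a compact set $\{\epsilon'\le\rho\le 1-\epsilon'\}$ of the interior while preserving the boundary data $\rho_\pm$ and the initial value $\upbar\rho_E$; (iii) mollify in time; and (iv) insert a short constant plateau so that the ``constant, then smooth in time'' structure defining $\mc F_2$ holds. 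Each operation keeps the path in $C([0,T];\mc M)$ and yields $\pi_\epsilon\to\pi$ in the Skorohod topology.

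The crux, and the step I expect to be the main obstacle, is the convergence $I_T(\pi_\epsilon|\upbar\rho_E)\to I_T(\pi|\upbar\rho_E)$. The lower bound comes for free: since $\pi_\epsilon\to\pi$ in $D([0,T];\mc M)$ and $I_T(\cdot|\upbar\rho_E)$ is lower semicontinuous (as established in \cite{blm1}), one has $\varliminf_\epsilon I_T(\pi_\epsilon|\upbar\rho_E)\ge I_T(\pi|\upbar\rho_E)$. The matching $\varlimsup$ bound is the hard direction and is exactly the content carried out in \cite[Theorem 4.6]{blm1}. Working with the explicit functional $\hat J_H$, one must show that the energy $\mc Q(\pi_\epsilon)$ stays bounded, that it controls the nonlinear correction terms involving $\chi(\pi_\epsilon)$, and that $\sup_H\hat J_H(\pi_\epsilon)$ does not exceed $I_T(\pi|\upbar\rho_E)$ in the limit. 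Here the convexity of $\mc Q$ controls the convex-combination step (ii), while step (i) adds no cost because the hydrodynamic trajectory is a zero-cost path for $I_T$; the mollification and plateau steps are handled by continuity estimates on $\hat J_H$. I would simply invoke \cite[Theorem 4.6]{blm1} for the general initial profile $\gamma$ to obtain $\{\pi_\epsilon\}\subset\mc F_2$ with both convergences.

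It remains to record the simplification proper to $\gamma=\upbar\rho_E$. Because $\upbar\rho_E$ is the \emph{stationary} solution of \eqref{eq:1}, step (i) does not merely regularize: the trajectory stays equal to $\upbar\rho_E$ throughout the initial interval, so each $\pi_\epsilon$ is \emph{constant} on $[0,\delta_1+\delta_2]$ rather than only smooth there. Combined with smoothness in time on $(\delta_1,T]$, this forces time-smoothness on all of $[0,T]$; and since $\upbar\rho_E$ is itself smooth and bounded away from $0$ and $1$, steps (ii)--(iv) preserve spatial smoothness and the interior bound. Hence each $\pi_\epsilon$ lies in $C^{\infty,\infty}([0,T]\times[-1,1])\cap\mc F_2=\mc D_0$, and a diagonal extraction $\pi^n:=\pi_{\epsilon_n}$ with $\epsilon_n\downarrow 0$ produces the required sequence.
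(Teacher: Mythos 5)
Your proposal is correct and follows essentially the same route as the paper: both reduce the statement to \cite[Theorem~4.6]{blm1} to obtain an approximating sequence in $\mc F_2$ with convergence of the rate functionals, and then observe that for the stationary initial datum $\gamma=\upbar\rho_E$ the paths in $\mc F_2$ are constant on an initial time interval, hence smooth in time on all of $[0,T]$ and bounded away from $0$ and $1$, so they actually lie in $\mc D_0$. The extra detail you give on the regularization steps and on the liminf/limsup split is consistent with, but not required by, the paper's one-paragraph argument.
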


The first two lemmata of this section state that, for smooth paths,
the functional $S_E$ satisfies \eqref{ham-jac0}.  Recall that for
$\rho\in\mc M$ we denote by $\Phi(\rho)\in\mc F_E$ the unique solution
to \eqref{Deq}.

\begin{lemma}
\label{dotS}
Let $E< E_0$, $T>0$, $\pi\in \mc D_0$, and $\Gamma: [0,T]\times[-1,1]
\to \bb R$ be defined by
\begin{equation}
\label{dGt}
\Gamma_t \; := \; \log \frac{\pi_t}{1-\pi_t} \; -\; \Phi(\pi_t) \; .
\end{equation}
Then
\begin{equation}
\label{nome2}
S_E(\pi_T) - S_E(\pi_0) 
\; =\;  \int_0^T  \!dt\, 
\langle \Gamma_t, \partial_t \pi_t \rangle \; .
\end{equation}
\end{lemma}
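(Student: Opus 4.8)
The plan is to compute the time derivative of $S_E(\pi_t)$ directly, using the variational characterization established in Theorem~\ref{t:S=V}~(ii). Since $\pi\in\mc D_0$ is smooth in space and time and bounded away from $0$ and $1$, Theorem~\ref{t:deq} guarantees that $\Phi(\pi_t)\in C^2([-1,1])$ depends smoothly on $t$, so $S_E(\pi_t)=\mc G_E(\pi_t,\Phi(\pi_t))$ is differentiable in $t$ and we may write
\begin{equation*}
S_E(\pi_T)-S_E(\pi_0)=\int_0^T \!dt\, \frac{d}{dt}\,\mc G_E(\pi_t,\Phi(\pi_t))\;.
\end{equation*}
By the chain rule this splits into a term from the explicit $\rho$-dependence and a term from the $\varphi$-dependence evaluated at $\varphi=\Phi(\pi_t)$. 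The crucial point is that $\Phi(\pi_t)$ is the maximizer of $\mc G_E(\pi_t,\cdot)$ over $\mc F_E$, so the first-order variation of $\mc G_E$ in the $\varphi$-direction vanishes at the optimum; hence only the partial derivative with respect to $\rho$ survives.

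First I would make the vanishing of the $\varphi$-variation rigorous. Computing $\delta \mc G_E/\delta\varphi$ from the definition \eqref{Grf}, one finds that setting it to zero reproduces exactly the Euler-Lagrange equation \eqref{Deq} (this is, after all, how \eqref{Deq} was derived). The boundary terms arising from integrating by parts the $\varphi'$-contribution vanish because every admissible variation $\partial_t\Phi(\pi_t)$ satisfies homogeneous Dirichlet conditions at $\pm 1$: indeed $\Phi(\pi_t)(\pm 1)=\varphi_\pm$ is fixed in $t$, so $\partial_t\Phi(\pi_t)(\pm 1)=0$. This is precisely the content of Lemma~\ref{t:ddeq}, which identifies $\psi_t=\partial_t\Phi(\pi_t)$ as the solution of a linear boundary value problem with $\psi_t(\pm 1)=0$. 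Thus the $\varphi$-variation contributes nothing and we are left with
\begin{equation*}
\frac{d}{dt}\,\mc G_E(\pi_t,\Phi(\pi_t))
=\int_{-1}^1 \!du\,\frac{\partial}{\partial\rho}\Big[\rho\log\rho+(1-\rho)\log(1-\rho)+(1-\rho)\varphi\Big]_{\rho=\pi_t,\,\varphi=\Phi(\pi_t)}\,\partial_t\pi_t\;.
\end{equation*}

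Next I would evaluate that partial derivative explicitly. Differentiating the $\rho$-dependent part of the integrand in \eqref{Grf} (the terms in $A_E$ and the $\varphi'$-terms carry no explicit $\rho$) gives $\log\rho-\log(1-\rho)-\varphi$, namely $\log[\rho/(1-\rho)]-\varphi$. Evaluating at $\rho=\pi_t$ and $\varphi=\Phi(\pi_t)$ yields precisely $\log[\pi_t/(1-\pi_t)]-\Phi(\pi_t)=\Gamma_t$, the function defined in \eqref{dGt}. Substituting back produces
\begin{equation*}
\frac{d}{dt}\,\mc G_E(\pi_t,\Phi(\pi_t))=\langle\Gamma_t,\partial_t\pi_t\rangle\;,
\end{equation*}
and integrating in $t$ gives \eqref{nome2}.

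I expect the main obstacle to be justifying the interchange of differentiation and the use of the optimality condition cleanly—that is, controlling the $\varphi$-variation term. One must confirm that $\Phi(\pi_t)$ is genuinely differentiable in $t$ in a strong enough topology (which follows from the $C^1$-dependence in Theorem~\ref{t:deq}~(ii) upgraded to $C^2$ via Lemma~\ref{t:ddeq}), and that the formal first-order stationarity $\langle\delta\mc G_E/\delta\varphi,\,\partial_t\Phi(\pi_t)\rangle=0$ holds as a genuine integral identity with no leftover boundary contributions. Since the integrand defining $\mc G_E$ contains the potentially delicate term $\varphi'\log\varphi'-(\varphi'-E)\log(\varphi'-E)$, I would use the uniform bounds \eqref{sf'} on $\nabla\Phi(\pi_t)$ to ensure all logarithmic factors stay bounded and all integrations by parts are legitimate; these bounds keep $\varphi'$ and $\varphi'-E$ bounded away from zero, so no singularity arises.
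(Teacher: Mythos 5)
Your proposal is correct and follows essentially the same route as the paper: write $S_E(\pi_t)=\mc G_E(\pi_t,\Phi(\pi_t))$ via Theorem~\ref{t:S=V}~(ii), differentiate in $t$ using the regularity from Lemma~\ref{t:ddeq}, and observe that the $\varphi$-variation term vanishes after integration by parts (with no boundary contribution since $\partial_t\Phi(\pi_t)(\pm 1)=0$) because $\Phi(\pi_t)$ solves the Euler--Lagrange equation \eqref{Deq}, leaving exactly $\langle\Gamma_t,\partial_t\pi_t\rangle$. Your explicit computation of the $\rho$-derivative and your justification of the interchange of limits match the paper's argument.
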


\begin{proof}
Let $\phi \equiv \phi_t (u):= \Phi(\pi_t) \,(u)$, $(t,u)\in
[0,T]\times[-1,1]$. By Lemma \ref{t:ddeq}, $\phi$ belongs to
$C^{1,2}\big([0,T]\times[-1,1]\big)$.  Since $\phi_t(\pm 1) =
\varphi_\pm$, then $\partial_t \phi_t(\pm 1)=0$, $t\in[0,T]$.  By
Theorem~\ref{t:S=V} (ii), dominated convergence, an explicit
computation, and an integration by parts,
\begin{eqnarray*}
\frac {d}{dt} S_E(\pi_t )
&=& \frac {d}{dt} \: \mc{G}_E\big(\pi_t \,,\, \Phi(\pi_t) \big) \\
&=& \big\langle \Gamma_t \,,\, \partial_t \pi_t\big\rangle
\;+\; \Big\langle \partial_t \phi_t \,,\, 
\frac{\Delta \phi_t}{\nabla\phi_t (\nabla\phi_t -E)} 
\;+\; \frac 1{1+ e^{\phi_t}} \;-\; \pi_t \Big\rangle\;.
\end{eqnarray*}
The lemma follows noticing that the last term vanishes by \eqref{Deq}.
\end{proof}

Let
\begin{equation}
\label{dMd}
\mc{M}_0 \;:=\; \big\{ \rho \in C^2\big([-1,1]\big)\, :\:
\rho(\pm 1) =\rho_\pm \,,\; 0 < \rho < 1 \big\}\;.
\end{equation}

\begin{lemma}
\label{ham-jac}
Let $E< E_0$, $\rho\in \mc M_0$, and $\Gamma: [-1,1] \to \bb R$ be
defined by
\begin{equation}
\label{h.0}
\Gamma  \;: = \; \log \frac{\rho}{1-\rho} \;-\; \Phi(\rho)\;.
\end{equation}
Then,
\begin{equation}
\label{h.1}
\big\langle \nabla \Gamma \,,\, \chi(\rho) \, \nabla \Gamma \big\rangle
\; - \; \big\langle \nabla \rho \;-\; E \chi(\rho) 
\,,\, \nabla \Gamma \big\rangle \; =\;  0 \; . 
\end{equation}
\end{lemma}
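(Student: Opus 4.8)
The plan is to reduce \eqref{h.1} to a single scalar integral which, after one integration by parts and the use of the Euler--Lagrange equation \eqref{Deq}, turns into a boundary term that vanishes. I would set $\phi := \Phi(\rho)$ and $\sigma := \log[\rho/(1-\rho)]$, so that $\Gamma = \sigma - \phi$. Because $\rho\in\mc M_0$ is $C^2$ and bounded away from $0$ and $1$, Theorem~\ref{t:deq}~(i) ensures $\phi\in C^2([-1,1])$; hence \eqref{Deq} holds classically and every integration by parts below is justified. Since $\nabla\sigma = \nabla\rho/\chi(\rho)$ one has $\chi(\rho)\nabla\Gamma = \nabla\rho - \chi(\rho)\nabla\phi$, and substituting this into the left hand side of \eqref{h.1} and factoring out $\nabla\Gamma$ I would rewrite the claim as
\[
\int_{-1}^1 \big(\nabla\rho - \chi(\rho)\,\nabla\phi\big)\big(\nabla\phi - E\big)\,du \;=\; 0 \;.
\]

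To evaluate this integral I would first integrate by parts the term containing $\nabla\rho$, using $\rho(\pm 1)=\rho_\pm$, to produce $[\rho(\nabla\phi - E)]_{-1}^1 - \int_{-1}^1 \rho\,\Delta\phi\,du$. Then I would insert the Euler--Lagrange equation in the form $\Delta\phi = \nabla\phi(\nabla\phi - E)[\rho - (1+e^\phi)^{-1}]$ and recall $\chi(\rho)=\rho(1-\rho)$. Collecting the two integrands over the common factor $\nabla\phi(\nabla\phi - E)$, the bracket $\{\rho[\rho - (1+e^\phi)^{-1}] + \rho(1-\rho)\}$ simplifies to $\rho\,G$, where $G := e^\phi/(1+e^\phi)$, leaving
\[
\int_{-1}^1 \big(\nabla\rho - \chi(\rho)\,\nabla\phi\big)(\nabla\phi - E)\,du
\;=\; \big[\rho(\nabla\phi - E)\big]_{-1}^1 \;-\; \int_{-1}^1 \rho\,G\,\nabla\phi(\nabla\phi - E)\,du \;.
\]

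The decisive point is that the surviving integrand is an exact derivative. Indeed $G$ satisfies $\nabla G = \chi(G)\nabla\phi$ with $\chi(G)=G(1-G)$ and $(1+e^\phi)^{-1}=1-G$; using \eqref{Deq} once more to replace the factor $\rho = (1-G) + \Delta\phi/[\nabla\phi(\nabla\phi - E)]$ I would obtain
\[
\rho\,G\,\nabla\phi(\nabla\phi - E) \;=\; \nabla G\,(\nabla\phi - E) + G\,\Delta\phi \;=\; \nabla\big[ G\,(\nabla\phi - E)\big] \;.
\]
Hence the last integral equals $[G(\nabla\phi - E)]_{-1}^1$ and the whole expression collapses to $[(\rho - G)(\nabla\phi - E)]_{-1}^1$. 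Since $\phi(\pm 1)=\varphi_\pm$ gives $G(\pm 1)=e^{\varphi_\pm}/(1+e^{\varphi_\pm})=\rho_\pm=\rho(\pm 1)$, the factor $\rho - G$ vanishes at both endpoints, the boundary term is zero, and \eqref{h.1} follows.

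The computation is elementary; the only genuine insight is recognizing that, after the Euler--Lagrange substitution, the remaining integrand is the total derivative $\nabla[G(\nabla\phi - E)]$, so that the integral reduces to a boundary term which then cancels against the one produced by the first integration by parts, precisely because $\rho$ and $G$ both equal $\rho_\pm$ at $u=\pm 1$. This is the integrated form of the assertion that $S_E$, whose first variation equals $\Gamma$ and which satisfies $\Gamma(\pm 1)=0$, solves the Hamilton--Jacobi equation \eqref{ham-jac0}; the vanishing of the boundary term reflects exactly the boundary condition imposed on $\delta V_E/\delta\rho$.
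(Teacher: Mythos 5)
Your proof is correct and follows essentially the same route as the paper's: both reduce \eqref{h.1} to $\int_{-1}^1(\nabla\rho-\chi(\rho)\nabla\phi)(\nabla\phi-E)\,du=0$, integrate by parts once, and exploit the Euler--Lagrange equation \eqref{Deq} together with the quadratic structure $\chi(a)=a(1-a)$ and the boundary matching $\rho(\pm1)=G(\pm1)=\rho_\pm$. The only difference is bookkeeping: the paper writes $\nabla\rho=\nabla(\rho-G)+\nabla G$ so that the boundary term vanishes immediately, whereas you integrate by parts $\nabla\rho$ alone and cancel the two resulting boundary terms at the very end.
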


\begin{proof}
As before we let $\phi \equiv \phi (u) := \Phi(\rho) \,(u)$, $u\in
[-1,1]$.  By Theorem~\ref{t:deq} (i), $\phi$ belongs to $C^2([-1,1])$.
By the definition of $\Gamma$ in \eqref{h.0}, statement \eqref{h.1} is
equivalent to
\begin{equation*}
\big\langle \nabla \rho \,,\, - \nabla \phi + E \big\rangle 
\;+\; \big\langle - \nabla \phi \,,\, \chi(\rho) 
\,(- \nabla \phi + E) \big\rangle \;=\; 0\; .
\end{equation*}
The above equation holds if and only if 
\begin{equation*}
\Big\langle \nabla \Big( \rho - \frac{e^\phi}{1 + e^\phi} \Big) \,,\, 
\nabla \phi - E \Big\rangle 
\;+\; \Big\langle \nabla \Big( \frac{e^\phi}{1 + e^\phi} \Big)\,,\, 
\nabla \phi - E \Big\rangle  
\;-\; \big\langle \nabla \phi \,,\, \chi(\rho) (\nabla \phi - E)
\big\rangle \;=\; 0 \;.
\end{equation*}
Since $e^{\phi(\pm 1)}/[1 + e^{\phi(\pm 1)}] 
= e^{\varphi_\pm} /[1 + e^{\varphi_\pm}] = \rho_\pm=\rho(\pm 1)$,
integrating by parts the previous equation, it becomes
\begin{equation*}
\Big\langle \rho - \frac{e^\phi}{1 + e^\phi}\,,\,  \Delta \phi \Big\rangle 
\;-\; \Big\langle \Big( \frac{e^\phi}{\big(1 + e^\phi\big)^2} 
\;-\; \chi(\rho) \Big) \nabla \phi \,,\, \nabla \phi - E \Big\rangle 
\;=\;0 \;.
\end{equation*}

At this point the explicit expression for $\chi$ given by 
$\chi(\rho)=\rho(1-\rho)$ plays a crucial role. Indeed, for such $\chi$,
\begin{equation*}
\frac{e^\phi}{(1 + e^\phi)^2} - \chi(\rho) \;=\;
- \, \Big( \frac{e^\phi}{1 + e^\phi} - \rho \Big)\:
\Big( \frac{e^\phi}{1 + e^\phi} - (1 - \rho) \Big)\;,
\end{equation*}
so that \eqref{h.1} is equivalent to
\begin{equation*}
\Big\langle \rho - \frac{e^\phi}{1 + e^\phi}\,,\,  \Delta \phi
\;+\; \nabla \phi \,(\nabla \phi - E)\,
\Big( 1 - \rho - \frac{e^\phi}{1 + e^\phi} \Big) \Big\rangle 
\;=\; 0\;, 
\end{equation*}
which holds true because $\phi=\Phi(\rho)$ solves \eqref{Deq}.
\end{proof}

We next prove the first half of the equality $V_E=S_E$. In fact the
argument basically shows that any solution to the Hamilton-Jacobi
equation \eqref{ham-jac0} gives a lower bound on the quasi-potential.

\begin{proof}[Proof of Theorem~\ref{t:S=V}: the inequality $V_E \geq S_E$]
In view of the variational definition of $V_E$ in \eqref{qp}, to prove
the lemma we need to show that for each $\rho\in\mc M$ we have
$S_E(\rho) \leq I_T(\pi |\upbar\rho_E)$ for any $T>0$ and any path
$\pi\in D\big([0,T];\mc{M}\big)$ such that $\pi_T=\rho$. 
  
Assume firstly that $\rho\in\mc M_0$ and consider only paths
$\pi\in\mc D_0$. Of course the energy $\mc Q(\pi)$ of such a path
$\pi$ is finite. In view of the variational definition of $I_T(\pi
|\upbar\rho_E)$ given in \eqref{f01}, \eqref{f02}, to prove that
$S_E(\rho) \leq I_T(\pi |\upbar\rho_E)$ it is enough to exhibit some
function $H \in C^{1,2}_0 ([0,T] \times [-1,1])$ for which $S_E(\rho)
\leq \hat J_{T,H,\upbar{\rho}_E} (\pi)$.  We claim that $\Gamma$ given
in \eqref{dGt} fulfills these conditions.  Let $\phi \equiv \phi_t(u)
:= \Phi(\pi_t)\,(u)$.  Since $\pi \in \mc D_{0}$, by
Lemma~\ref{t:ddeq} $\Gamma\in C^{1,2} ([0,T] \times [-1,1])$. On the
other hand, since $\pi_t(\pm 1)=\rho_\pm$ and $\phi_t(\pm
1)=\varphi_\pm$, $\Gamma_t (\pm 1) =0$, $t\in[0,T]$; whence $\Gamma\in
C^{1,2}_0 ([0,T] \times [-1,1])$.  Recalling the definition of the
functional $\hat J_{T,\Gamma,\upbar{\rho}_E}$, after an integration by
parts, we obtain that
\begin{equation*}
\hat J_{T,\Gamma,\upbar{\rho}_E} (\pi) \;=\;
\int_0^T \!dt\, \Big[ \big\langle \Gamma_t,\partial_t \pi_t \big\rangle 
+ \frac 12 \big\langle  \nabla \Gamma_t, \nabla \pi_t 
- E \chi(\pi_t) \big\rangle 
- \frac 12 \big\langle  \chi(\pi_t) , (\nabla \Gamma_t)^2\big\rangle  
\Big] \; .
\end{equation*}
From Lemmata \ref{dotS}, \ref{ham-jac} and since $\pi_0 =
\upbar\rho_E$, $S_E(\upbar\rho_E) =0$, it follows that $\hat
J_{T,\Gamma,\upbar{\rho}_E} (\pi) = S_E(\rho)$, which proves the
statement for $\rho\in\mc M_0$ and paths $\pi\in \mc D_0$. 

Let now $\rho\in\mc M$ and consider an arbitrary path $\pi\in
D\big([0,T];\mc M\big)$ such that $\pi_T=\rho$. With no loss of
generality we can assume $I_T(\pi|\upbar\rho_E)<\infty$. Let
$\{\pi^n\}\subset \mc D_0$ be the sequence given by Theorem
\ref{s01}. The result for $\rho\in \mc M_0$ and paths in $\mc D_0$,
together with the lower semicontinuity of $S_E$, yield
\begin{equation*}
I_T(\pi|\upbar\rho_E) = 
\lim_{n\to\infty} I_T(\pi^n|\upbar\rho_E) \ge  
\varliminf_{n\to\infty}  S_E(\pi^n_T) \ge S_E (\pi_T) 
=  S_E(\rho)\;,
\end{equation*}
which concludes the proof.
\end{proof}

To prove the converse inequality $V_E\le S_E$ on $\mc M$, we need to
characterize the optimal path for the variational problem \eqref{qp}.
The following lemma explains which is the right candidate.  

Denote by $C^\infty_K (\Omega_T)$ the smooth functions $H : \Omega_T
\to \bb R$ with compact support.  For a trajectory $\pi$ in $D([0,T],
\mc M)$, let $\mc H^{1}_0 (\chi(\pi))$ be the Hilbert space induced by
$C^\infty_K (\Omega_T)$ endowed with the scalar product defined by
\begin{equation*}
\<\!\< G,H \>\!\>_{1,\chi(\pi)} \;=\; 
\int_0^T dt\, \int_{-1}^1 du\, 
(\nabla G) (t,u) \, (\nabla H)(t,u)  \, \chi(\pi(t,u))  \;.
\end{equation*}
Induced means that we first declare two functions $F$, $G$ in
$C^{\infty}_K(\Omega_T)$ to be equivalent if $\<\!\< F-G, F-G
\>\!\>_{1,\chi(\pi)} =0$ and then we complete the quotient space with
respect to scalar product.  Denote by $\Vert \cdot \Vert_{1,
  \chi(\pi)}$ the norm associated to the scalar product $\<\!\<\cdot,
\cdot \>\!\>_{1,\chi(\pi)}$.

Repeating the arguments of the proof of Lemma 4.7 in \cite{blm1}, we
obtain an explicit expression of the rate function $I_T(\pi|\gamma )$
in terms of a solution to an elliptic equation.

\begin{lemma}
\label{g07}
Fix a trajectory $\pi$ in $\mc D_0$. For each $0\le t\le T$, let $H_t$
be the unique solution to the elliptic equation
\begin{equation}
\label{f06}
\left\{
\begin{array}{l}
\partial_t \pi_t  \;=\; (1/2)
\Delta \pi_t \;-\; \nabla \big\{ \chi(\pi_t) 
\big[ (E/2) + \nabla H_t \big] \big\}\;, \\
H_t(\pm 1) \;=\; 0\;.
\end{array}
\right.
\end{equation}
Then, $H$ is smooth on $[0,T] \times [-1,1]$ and
\begin{equation}
\label{f13}
I_T(\pi|\pi_0 ) \;=\; \frac 12  \Vert H \Vert^2_{1, \chi(\pi)}\;.
\end{equation}
\end{lemma}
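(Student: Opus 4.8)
The plan is to dispatch the two assertions in turn: first solve the elliptic problem \eqref{f06} and record the regularity of $H$, then compute $\hat I_T(\pi|\pi_0)$ explicitly by recognizing it as the supremum of a quadratic functional whose maximizer is exactly $H$. For the first step, fix $\pi\in\mc D_0$ and rewrite \eqref{f06}, for each $t$, as the one-dimensional Sturm--Liouville problem
\begin{equation*}
\nabla\big\{ \chi(\pi_t)\,\nabla H_t \big\} \;=\; b_t\;,\qquad
b_t \;:=\; \tfrac12 \Delta\pi_t - \tfrac E2 \nabla\chi(\pi_t)
- \partial_t\pi_t\;,\qquad H_t(\pm1)=0\;.
\end{equation*}
Since $\pi\in\mc D_0$ is smooth and bounded away from $0$ and $1$, the coefficient $\chi(\pi_t)$ is smooth and bounded away from zero, so the problem is uniformly elliptic and $b_t$ is smooth in $(t,u)$. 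I would solve it by two integrations,
\begin{equation*}
H_t(u) \;=\; \int_{-1}^u \frac{dw}{\chi(\pi_t(w))}
\Big[ \int_{-1}^w b_t(v)\,dv \;+\; c_t \Big]\;,
\end{equation*}
with $c_t$ uniquely fixed by the remaining boundary condition $H_t(1)=0$. This yields existence and uniqueness, and the explicit formula shows that $H$ is jointly smooth on $[0,T]\times[-1,1]$; in particular $H\in C^{1,2}_0(\overline{\Omega_T})$.

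\emph{Rewriting the rate function.} Because $\mc Q(\pi)<\infty$ for $\pi\in\mc D_0$, we have $I_T(\pi|\pi_0)=\hat I_T(\pi|\pi_0)=\sup_{\tilde H}\hat J_{\tilde H}(\pi)$. The key computation is to simplify $\hat J_{\tilde H}(\pi)$ for an arbitrary test function $\tilde H\in C^{1,2}_0(\overline{\Omega_T})$, exploiting the smoothness and the boundary data of $\pi$. Integrating by parts in time, the boundary terms at $t=0,T$ cancel against $\langle\pi_T,\tilde H_T\rangle-\langle\gamma,\tilde H_0\rangle$ since $\gamma=\pi_0$; integrating by parts twice in space in the Laplacian term, the contributions $\pm\frac{\rho_\pm}2\nabla\tilde H_t(\pm1)$ produced by $\pi_t(\pm1)=\rho_\pm$ are precisely those already present in $\hat J$, so they too cancel. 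The outcome is
\begin{equation*}
\hat J_{\tilde H}(\pi) \;=\; \int_0^T \!dt\,
\big\langle \mc L\pi_t \,,\, \tilde H_t \big\rangle
\;-\; \frac12 \int_0^T\!dt\,
\big\langle \chi(\pi_t)\,,\,(\nabla \tilde H_t)^2 \big\rangle\;,
\qquad
\mc L\pi_t := \partial_t\pi_t - \tfrac12\Delta\pi_t + \tfrac E2\nabla\chi(\pi_t)\;.
\end{equation*}

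\emph{Identifying the maximizer.} By construction $\mc L\pi_t = -\nabla\{\chi(\pi_t)\nabla H_t\}$, so one further integration by parts (using $\tilde H_t(\pm1)=0$) gives $\langle\mc L\pi_t,\tilde H_t\rangle = \langle\chi(\pi_t)\nabla H_t,\nabla\tilde H_t\rangle$. Hence
\begin{equation*}
\hat J_{\tilde H}(\pi) \;=\;
\<\!\< H,\tilde H \>\!\>_{1,\chi(\pi)}
\;-\; \frac12 \,\Vert \tilde H \Vert^2_{1,\chi(\pi)}
\;=\; \frac12\,\Vert H\Vert^2_{1,\chi(\pi)}
\;-\; \frac12\,\Vert \tilde H - H\Vert^2_{1,\chi(\pi)}\;.
\end{equation*}
This quadratic functional is maximized at $\tilde H=H$, and since the first step exhibits $H$ as a legitimate admissible test function, the supremum is actually attained, yielding $I_T(\pi|\pi_0)=\frac12\Vert H\Vert^2_{1,\chi(\pi)}$.

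I expect the main obstacle to be purely bookkeeping: carrying out the integrations by parts in the second step while correctly tracking the boundary terms at $u=\pm1$ and at $t=0,T$ and checking that they cancel exactly against the explicit boundary terms in $\hat J$. The conceptual content—that the rate function equals one half of the squared $\mc H^1_0(\chi(\pi))$ norm of the solution to the linearized hydrodynamic equation—is transparent once $H$ is known to be smooth, which is precisely why establishing its regularity in the first step is essential: it is what allows $H$ itself to be used as an optimal test function rather than merely as an element of the abstract completion.
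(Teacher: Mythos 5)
Your argument is correct and is exactly the standard completion-of-the-square computation that the paper itself invokes only by reference (it cites the proof of Lemma~4.7 of \cite{blm1} rather than writing it out): integrate $\hat J_{\tilde H}(\pi)$ by parts using $\pi_0=\gamma$, $\pi_t(\pm1)=\rho_\pm$, $\tilde H_t(\pm1)=0$ to get a linear-minus-quadratic functional of $\nabla\tilde H$, then identify the linear part with $\<\!\< H,\tilde H\>\!\>_{1,\chi(\pi)}$ via \eqref{f06} and observe the supremum is attained at $\tilde H=H$, which is admissible by the explicit double-integration formula. Since $\pi\in\mc D_0$ keeps $\chi(\pi)$ smooth and bounded away from zero, every step is justified, and your proof matches the intended one.
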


We could have used next lemma to prove the inequality $V_E\ge S_E$; we
presented the separate argument before for its simplicity. On the
other hand, \eqref{I=} clearly suggests that the optimal path for the
variational problem \eqref{qp} is obtained by taking a path which
satisfies \eqref{f07bis} with $K=0$. Recall that $\Phi(\rho)$ denotes
the solution to \eqref{Deq}.

\begin{lemma}
\label{I=I*+S}
Let $E< E_0$, $T>0$, $\gamma \in \mc M_0$, and $\pi \in \mc D_{0}$
such that $I_T(\pi|\gamma)<\infty$.  Then, there exists $K$ in
$C^{1,2}_0\big([0,T]\times [-1,1]\big)$ such that $\pi$ is a classical
solution to
\begin{equation} 
\label{f07bis}
\begin{cases}
{\displaystyle
\partial_t \pi_t 
+ \frac E2 \,  \nabla \chi(\pi_t) 
= - \frac 12  \, \Delta \pi_t 
+  \nabla \big[ \chi(\pi_t) \nabla  \big( \Phi(\pi_t) + K_t \big) 
\big]  
}
\\
{\displaystyle
\vphantom{\Big\{}
\pi_t(\pm 1) = \rho_\pm
}
\\
{\displaystyle
\pi_0 = \gamma\;.
}
\end{cases}
\end{equation}
Furthermore, 
\begin{equation}
\label{I=}
I_T(\pi|\gamma)=     S_E(\pi_T) -  S_E(\gamma) + 
\frac 12 \, \big\| K \big\|_{1,\chi(\pi)}^2 \;\cdot
\end{equation}
\end{lemma}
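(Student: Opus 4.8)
The plan is to construct the decomposition directly from Lemma~\ref{g07}. Since $\pi\in\mc D_0$ and $I_T(\pi|\gamma)<\infty$, that lemma furnishes a smooth function $H$, vanishing at $u=\pm1$, which solves \eqref{f06} and satisfies $I_T(\pi|\gamma)=\tfrac12\Vert H\Vert^2_{1,\chi(\pi)}$. Recalling $\Gamma_t=\log[\pi_t/(1-\pi_t)]-\Phi(\pi_t)$ from \eqref{dGt}, I would simply \emph{define} $K:=\Gamma-H$ and then verify that this $K$ meets both requirements of the statement. For the regularity, note that $\pi\in C^{\infty,\infty}$ is bounded away from $0$ and $1$, so $\log[\pi/(1-\pi)]$ is smooth, while $\Phi(\pi)\in C^{1,2}\big([0,T]\times[-1,1]\big)$ by Lemma~\ref{t:ddeq}; together with the smoothness of $H$ this gives $K\in C^{1,2}$. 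Moreover $\Gamma_t(\pm1)=0$ and $H_t(\pm1)=0$, whence $K_t(\pm1)=0$ and $K\in C^{1,2}_0\big([0,T]\times[-1,1]\big)$.

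To check that $\pi$ solves \eqref{f07bis} with this $K$, I would use the elementary identity $\chi(\pi)\nabla\log[\pi/(1-\pi)]=\nabla\pi$, equivalently $\Delta\pi=\nabla\{\chi(\pi)\nabla(\Gamma+\Phi(\pi))\}$. Equating the two expressions for $\partial_t\pi_t$ provided by \eqref{f06} and \eqref{f07bis} and cancelling the common terms $-\tfrac E2\nabla\chi(\pi)$ and $\nabla\{\chi(\pi)\nabla\Phi(\pi)\}$, the claim reduces to $\nabla\{\chi(\pi)\nabla(\Gamma-H-K)\}=0$, which holds identically because $K=\Gamma-H$. This is a short computation once the logarithmic identity is in hand.

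The energy identity \eqref{I=} then follows by expanding the quadratic form. Writing $H=\Gamma-K$ and using $I_T(\pi|\gamma)=\tfrac12\Vert H\Vert_{1,\chi(\pi)}^2$ gives
$$I_T(\pi|\gamma)=\tfrac12\Vert\Gamma\Vert_{1,\chi(\pi)}^2-\langle\!\langle\Gamma,K\rangle\!\rangle_{1,\chi(\pi)}+\tfrac12\Vert K\Vert_{1,\chi(\pi)}^2 .$$
It therefore remains to identify $S_E(\pi_T)-S_E(\gamma)$ with $\tfrac12\Vert\Gamma\Vert_{1,\chi(\pi)}^2-\langle\!\langle\Gamma,K\rangle\!\rangle_{1,\chi(\pi)}$. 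For this I would start from Lemma~\ref{dotS}, $S_E(\pi_T)-S_E(\gamma)=\int_0^T\langle\Gamma_t,\partial_t\pi_t\rangle\,dt$, substitute $\partial_t\pi_t$ from \eqref{f06}, and integrate by parts in space (all boundary contributions vanish because $\Gamma_t(\pm1)=0$). The drift term $\langle\Gamma_t,B(\pi_t)\rangle$, with $B(\pi_t)=\tfrac12\Delta\pi_t-\tfrac E2\nabla\chi(\pi_t)$, is handled by the Hamilton-Jacobi identity of Lemma~\ref{ham-jac}, applicable since $\pi_t\in\mc M_0$ for each $t$; it equals $-\tfrac12\langle\nabla\Gamma_t,\chi(\pi_t)\nabla\Gamma_t\rangle$. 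The remaining term produces $\langle\nabla\Gamma_t,\chi(\pi_t)\nabla H_t\rangle$, so that $S_E(\pi_T)-S_E(\gamma)=-\tfrac12\Vert\Gamma\Vert^2_{1,\chi(\pi)}+\langle\!\langle\Gamma,H\rangle\!\rangle_{1,\chi(\pi)}$; inserting $H=\Gamma-K$ gives exactly $\tfrac12\Vert\Gamma\Vert^2_{1,\chi(\pi)}-\langle\!\langle\Gamma,K\rangle\!\rangle_{1,\chi(\pi)}$, and hence \eqref{I=}.

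The argument is essentially algebraic, so the main care goes into the bookkeeping rather than any single hard estimate. One must check that every integration by parts has vanishing boundary contributions, which is guaranteed by $\Gamma$, $H$ and $K$ all vanishing at $\pm1$; that $\pi_t$ remains in $\mc M_0$ so that Lemma~\ref{ham-jac} applies pointwise in $t$; and that $K$ genuinely lands in $C^{1,2}_0$, which rests on the $C^{1,2}$-regularity of $\Phi(\pi)$ from Lemma~\ref{t:ddeq}. The only genuinely problem-specific inputs, beyond soft functional-analytic facts, are the identity $\chi(\pi)\nabla\log[\pi/(1-\pi)]=\nabla\pi$ and the Hamilton-Jacobi relation, both already established above.
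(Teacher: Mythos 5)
Your proposal is correct and follows essentially the same route as the paper: define $K:=\Gamma-H$ with $H$ from Lemma~\ref{g07}, check regularity and the boundary conditions, verify \eqref{f07bis} by the identity $\nabla\pi=\chi(\pi)\nabla(\Gamma+\Phi(\pi))$, and derive \eqref{I=} from Lemma~\ref{dotS}, the substitution of \eqref{f06}, and the Hamilton--Jacobi relation of Lemma~\ref{ham-jac}. The only difference is cosmetic bookkeeping (you expand $\tfrac12\Vert\Gamma-K\Vert^2$ first, the paper substitutes $H=\Gamma-K$ in the intermediate expression), so nothing further is needed.
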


\begin{proof} 
Note that $\gamma = \pi_0$ because we assume the rate function to be
finite. Denote by $H$ the smooth function introduced in Lemma
\ref{g07} and let $\Gamma$ be as defined in \eqref{dGt}. We claim that
$K:=\Gamma-H$ meets the requirements in the lemma. As before we have
that $\Gamma$ belongs to $C^{1,2}_0\big([0,T]\times [-1,1]\big)$.
Hence, $K$ also belongs to this space because $H$ is smooth and
vanishes at the boundary of $[-1,1]$. The equation \eqref{f07bis}
follows easily from \eqref{f06} replacing $H$ by $\Gamma - K$. To
prove identity \eqref{I=}, consider \eqref{nome2} and express
$\partial_t \pi_t$ in terms of the differential equation in
\eqref{f06}. Since $H= \Gamma - K$, after an integration by parts we
get that $S_E(\pi_T) - S_E(\gamma)$ is equal to
\begin{equation*}
- \frac 12 \int_0^T \!dt\, 
\big\langle \nabla \Gamma_t,  \nabla \pi_t  - E\,  \chi(\pi_t)\big\rangle 
\;+\; \int_0^T \!dt\, \big\langle \nabla \Gamma_t\,,\, \chi(\pi_t) 
\nabla \big( \Gamma_t -K_t \big) \big\rangle\; .
\end{equation*}
By Lemma \ref{ham-jac}, the previous expression is equal to
\begin{equation*}
\frac 12  \int_0^T \!dt \, \big\langle \nabla \Gamma_t\,,\, 
\chi(\pi_t) \nabla \Gamma_t \big\rangle
\;-\; \int_0^T \!dt \, \big\langle \nabla \Gamma_t \,,\, 
\chi(\pi_t) \nabla K_t \big\rangle\;.
\end{equation*}
Since $K=\Gamma-H$, we finally get that 
\begin{equation*}
S_E(\pi_T) \;-\; S_E(\gamma) \;+\; \frac 12 \, 
\big\| K \big\|_{1,\chi(\pi)}^2 \;=\;
\frac 12 \, \big\| H \big\|_{1,\chi(\pi)}^2\;,
\end{equation*}
which, in view of \eqref{f13}, concludes the proof.
\end{proof}

We next show how a solution to the (nonlocal) equation \eqref{f07bis}
with $K=0$ can be obtained by the algorithm presented below the
statement of Theorem~\ref{t:S=V}. Recall that such algorithm requires
to solve \eqref{Deq} only for the initial datum and then to solve the
(local) hydrodynamic equation \eqref{eq:1}. Note indeed that by
setting $\pi^*_t:=\rho^*_{-t}$, where $\rho^*$ is defined in the next
lemma, then $\pi^*$ solves the differential equation in \eqref{f07bis}
with $K=0$.

Fix $E< E_0$, $\gamma\in\mc M_0$ and set $G := e^{\Phi(\gamma)}/
[1+e^{\Phi(\gamma)}]$. By Theorem~\ref{t:deq} the profile $G$ belongs
to $C^4([-1,1])$, it is strictly increasing and satisfies $G(\pm
1)=\rho_\pm$. Denote by $F\equiv F_t(u)\in C^{1,4}\big([0,\infty)
\times [-1,1]\big)$ the solution to the hydrodynamic equation
\eqref{eq:1} with $\gamma$ replaced by $G$. By the maximum principle,
$\rho_- \le F \le \rho_+$.

\begin{lemma}
\label{t:cadh}
Let $\psi :=\log [ F / (1-F) ]$. Then, $\psi$ belongs to
$C^{1,4}\big([0,\infty) \times [-1,1]\big)$ and satisfies $\nabla \psi
> 0\lor E$.  Let $\rho^*\equiv \rho^*_t(u)$ be defined by
\begin{equation}
\label{r*}
\rho^* := \frac{1}{1+ e^{\psi}} +
\frac{\Delta \psi}{\nabla\psi (\nabla \psi -E)}\;\cdot 
\end{equation} 
Then, $\rho^*$ belongs to $C^{1,2}\big([0,\infty) \times [-1,1]\big)$,
satisfies $\rho^*_t(\pm 1)=\rho_\pm$, $0<\rho^*<1$, and solves
\begin{equation}
\label{adhr*}
\begin{cases}
{\displaystyle
\partial_t \rho^*_t - \frac E2 \,  \nabla \chi(\rho^*_t)
= \frac 12  \, \Delta \rho^*_t - 
\nabla \big[ \chi(\rho^*_t) \nabla \Phi(\rho^*_t) \big]} \\
{\displaystyle \vphantom{\Big\{}
\rho^*_t(\pm 1) = \rho_\pm} \\
{\displaystyle
\rho^*_0 = \gamma\;.}
\end{cases}
\end{equation}
\end{lemma}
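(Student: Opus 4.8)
The key structural observation is that \eqref{r*} is nothing but the Euler--Lagrange equation \eqref{Deq} with $\varphi$ replaced by $\psi_t$ and $\rho$ replaced by $\rho^*_t$; equivalently, $\rho^*_t$ is the density whose associated potential is $\psi_t$, i.e.\ $\Phi(\rho^*_t)=\psi_t$. The plan is therefore to first show that for each $t$ the function $\psi_t$ belongs to $\mc F_E$, with enough joint regularity in $(t,u)$, and then to read off every claimed property of $\rho^*$ from the relation $\Phi(\rho^*_t)=\psi_t$ together with Theorem~\ref{t:deq} and Lemma~\ref{t:ddeq}. Since $\rho_-\le F\le\rho_+$ with $0<\rho_-\le\rho_+<1$ and $x\mapsto\log[x/(1-x)]$ is smooth with bounded derivatives on $[\rho_-,\rho_+]$, the composition $\psi=\log[F/(1-F)]$ inherits the $C^{1,4}$ regularity of $F$; moreover $\psi_t(\pm 1)=\varphi_\pm$ because $F_t(\pm1)=\rho_\pm$, and $\psi_0=\Phi(\gamma)$ by the definition of $G$.

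Next I would record the equation solved by $\psi$. Substituting $F=e^\psi/(1+e^\psi)$ into the viscous Burgers equation \eqref{eq:1} and using $\nabla F=\chi(F)\nabla\psi$ gives
\[
\partial_t \psi \;=\; \tfrac 12 \Delta\psi \;+\; \tfrac 12\,\chi'(F)\,\nabla\psi\,(\nabla\psi-E)\;,
\]
with $\chi'(F)=1-2F$; call this equation $(*)$. Evaluating $(*)$ at $u=\pm1$, where $\partial_t\psi=0$, yields $\Delta\psi(\pm1)=-\chi'(\rho_\pm)\,\nabla\psi(\nabla\psi-E)|_{\pm1}$, and inserting this into \eqref{r*} (recalling $1/(1+e^{\varphi_\pm})=1-\rho_\pm$) gives precisely $\rho^*_t(\pm1)=(1-\rho_\pm)-\chi'(\rho_\pm)=\rho_\pm$. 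The initial condition $\rho^*_0=\gamma$ is immediate, since $\psi_0=\Phi(\gamma)$ makes \eqref{r*} reproduce $\gamma$.

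The heart of the proof, and the step I expect to be the main obstacle, is the derivative bound $\nabla\psi>0\lor E$, the difficulty being that there is no boundary datum for $\nabla F$. I would prove it by two maximum-principle arguments. For the bound $\nabla\psi>E$ introduce the \emph{current} $Q:=\nabla F-E\chi(F)=\chi(F)(\nabla\psi-E)$; from \eqref{eq:1} one has $\partial_tF=\tfrac12\nabla Q$, whence $Q$ solves the linear drift-diffusion equation $\partial_tQ=\tfrac12\Delta Q-\tfrac E2\chi'(F)\nabla Q$, which has no zeroth-order term, while $\nabla Q(\pm1)=2\,\partial_tF(\pm1)=0$. Thus $Q$ carries homogeneous Neumann data, and by the parabolic minimum principle, with Hopf's lemma excluding a boundary minimum, its minimum is attained at $t=0$, where $Q_0=\chi(G)(\nabla\Phi(\gamma)-E)>0$ because $\Phi(\gamma)\in\mc F_E$; since $\chi(F)\le 1/4$ this yields $\nabla\psi-E\ge\delta>0$ uniformly. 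For the bound $\nabla\psi>0$ I would use a sliding argument: the shifts $F(t,u+h)$ also solve \eqref{eq:1}, and $\rho_-\le F\le\rho_+$ together with the strict monotonicity of $G=F_0$ shows, by comparison on the domain $[-1,1-h]$, that $F(t,u+h)\ge F(t,u)$; letting $h\downarrow 0$ gives $\nabla F\ge0$, which the strong maximum principle in the interior and Hopf's lemma at the endpoints upgrade to $\nabla F>0$, hence $\nabla\psi>0$. On each $[0,T]\times[-1,1]$ both bounds hold with a uniform gap, so $\nabla\psi$ and $\nabla\psi-E$ are bounded away from $0$.

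With $\psi_t\in\mc F_E$ for every $t$ and $\nabla\psi(\nabla\psi-E)$ locally bounded away from $0$, the function $\rho^*$ defined by \eqref{r*} is manifestly of class $C^{1,2}\big([0,\infty)\times[-1,1]\big)$, the $C^{1,4}$ regularity of $\psi$ being exactly what is needed to control $\Delta\rho^*$ and $\partial_t\rho^*$. By construction $\psi$ solves \eqref{Deq} with density $\rho^*$. To obtain the evolution equation \eqref{adhr*} I would then carry out a direct computation from $(*)$ and \eqref{r*}, rearranging with the same algebra that underlies Lemmata~\ref{dotS} and~\ref{ham-jac} (equivalently, one differentiates $\Phi(\rho^*_t)=\psi_t$ in time via Lemma~\ref{t:ddeq}); this exhibits \eqref{adhr*} as the image of \eqref{eq:1} under the nonlocal map $\Phi^{-1}$. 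Finally $0<\rho^*<1$ follows by comparison: reading \eqref{adhr*} with $\Phi(\rho^*)=\psi$ as a uniformly parabolic equation for $\rho^*$ with known coefficient $\nabla\psi$, the constants $0$ and $1$ are stationary solutions, so the maximum principle together with $0<\gamma<1$ and $0<\rho_\pm<1$ forces $0<\rho^*<1$. Collecting these facts establishes all the assertions of the lemma.
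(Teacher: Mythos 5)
Your argument is correct, but it reaches the crucial bound $\nabla\psi>0\lor E$ by a genuinely different route than the paper. The paper runs a continuation argument: it sets $\tau:=\sup\{t\ge 0: \nabla\psi_s>0\lor E \text{ on } [0,t]\times[-1,1]\}$, notes that on $[0,\tau)$ the identity $\Phi(\rho^*_t)=\psi_t$ holds by the uniqueness part of Theorem~\ref{t:deq}, and then invokes the uniform gradient bounds \eqref{bk1}--\eqref{bk2} from the Schauder fixed-point construction (namely $\nabla\Phi(\rho)$, resp.\ $\nabla\Phi(\rho)-E$, is bounded below by $C^{-1}$ uniformly in $\rho\in\mc M$) to rule out failure of the bound at $t=\tau$, so $\tau=\infty$. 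You instead work directly on the hydrodynamic equation: the current $Q=\nabla F-E\chi(F)$ satisfies a linear drift-diffusion equation with no zeroth-order term and homogeneous Neumann data, so its minimum is attained at $t=0$ where it is positive, yielding $\nabla\psi-E>0$ with a time-uniform gap; and the sliding/comparison argument plus Hopf's lemma gives $\nabla F>0$, hence $\nabla\psi>0$. Your route is more self-contained as a PDE argument and produces a quantitative lower bound on $\nabla\psi-E$, while the paper's is shorter because it recycles the a priori estimates already established for the map $\Phi$. The remaining steps (boundary values by evaluating the equation for $\psi$ at $u=\pm1$, the initial condition, the omitted verification of \eqref{adhr*}) coincide; for $0<\rho^*<1$ you compare with the constant solutions $0$ and $1$ of the equation with frozen coefficient $\nabla\psi$, whereas the paper computes $\partial_t\rho^*$ at a putative interior extremum against the thresholds $1-\rho_-$, $\rho_+$, $1-\delta$ --- both work, and your ordering of steps (derive the PDE for $\rho^*$ first, then conclude $\rho^*_t\in\mc M$ and only afterwards identify $\Phi(\rho^*_t)=\psi_t$ by uniqueness) correctly avoids the potential circularity.
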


\begin{proof}
Let $\psi: [0,\infty)\times[-1,1] \to \bb R$ be given by $\psi = \log
\{ F/1-F\}$ and set
\begin{equation*}
\tau := \sup \big\{ t\ge 0 \,:\: \nabla \psi_s (u) > 0 \lor E 
\textrm{ for all } (s,u) \in [0,t]\times [-1,1] \big\} \;.
\end{equation*}
Since $\nabla\psi_0 = \nabla \Phi(\gamma) > 0\lor E$, $\tau>0$ by
continuity. We show at the end of the proof that $\tau=\infty$.

A straightforward computation shows that $\psi$ solves
\begin{equation}
\label{epsi}
\begin{cases}
{\displaystyle 
\partial_t \psi = \frac 12 \, \Delta \psi +\frac{1}{2} \,
\frac{1-e^{\psi}}{1+e^{\psi}} \nabla \psi \big( \nabla \psi -
E \big) } \\
\vphantom{\Big\{} \psi_t(\pm 1 ) = \varphi_\pm \\
\psi_0 = \Phi(\gamma)\;.
\end{cases}
\end{equation}
Since $\psi\in C^{1,4}\big([0,\infty) \times [-1,1]\big)$, definition
\eqref{r*} yields $\rho^*\in C^{1,2}\big([0,\tau)\times [-1,1]\big)$
and $\rho^*_0=\gamma$. On the other hand, from \eqref{epsi} we deduce
that for any $t\in[0,\tau)$
\begin{equation*}
\Delta \psi_t(\pm 1) 
+ \frac{1-e^{\varphi_\pm}}{1+e^{\varphi_\pm}} 
\nabla \psi_t(\pm 1) \big[ \nabla \psi_t(\pm 1) - E \big] =0 \;.
\end{equation*}
Whence, again by \eqref{r*},
\begin{equation*}
\rho^*_t (\pm 1) = \frac{1}{1+ e^{\varphi_\pm}} -
\frac{1-e^{\varphi_\pm}}{1+e^{\varphi_\pm}} = \rho_\pm\;.
\end{equation*}
By using \eqref{epsi}, a long and tedious computation that we omit
shows that $\rho^*_t$, $t\in [0,\tau)$, solves the differential
equation in \eqref{adhr*}.

We next show that $0 < \rho^* < 1$.  Since $\gamma\in \mc M_0$, there
exists $\delta\in (0,1)$ such that $\delta \le \gamma \le 1-\delta$.
We claim that $\min\{\rho_-,1-\rho_+,\delta\} \le \rho^* \le
\max\{\rho_+,1-\rho_-,1-\delta\}$. Fix $t\in (0,\tau)$ and assume that
$\rho^*_t(\cdot)$ has a local maximum at $u_0\in (-1,1)$. Since
$\rho^*$ solves \eqref{adhr*}, since $\Phi(\rho^*)$ solves \eqref{Deq}
and since $\nabla \rho^*_t(u_0) =0$, $\Delta \rho^*_t(u_0) \le 0$,
\begin{eqnarray*}
&&\!\!\!\! \partial_t \rho^*_t(u_0) \;=\; \frac 12 \Delta\rho^*_t(u_0)
\;-\;\chi(\rho^*_t(u_0)) \, \Delta \Phi(\rho^*_t) (u_0) \\
&&\quad \le  -\chi(\rho^*_t(u_0)) \,  \nabla \Phi(\rho^*_t) (u_0) 
\, \big[ \nabla \Phi(\rho^*_t) (u_0) -E \big] 
\Big[ \rho^*_t(u_0) - \frac 1{ 1+ e^{\Phi(\rho^*_t) (u_0)}} \Big]\;.
\end{eqnarray*}
Assume now that $\rho^*_t(u_0)> 1-\rho_-$.  Since $\Phi(\rho^*_t) \ge
\varphi_-$ we deduce $\rho^*_t(u_0) - [1+e^{\Phi(\rho^*_t)
  (u_0)}]^{-1} > 1-\rho_- - [1+e^{\varphi_-}]^{-1}=0$. As $\nabla
\Phi(\rho^*_t) (u_0) > 0\lor E$, we get $\partial_t \rho^*_t(u_0) <
0$. In particular, by a standard argument, $\rho^* \le
\max\{\rho_+,1-\rho_-,1-\delta\}$. The proof of the lower bound is
analogous.

We conclude the proof showing that $\tau=\infty$. Assume that
$\tau<\infty$. Since for each $t\in [0,\tau)$, $\rho^*_t$ belongs to
$\mc M_0$, it follows from \eqref{r*} that $\Phi(\rho^*_t) = \psi_t$,
$t\in [0,\tau)$. By Theorem \ref{t:deq} (ii), $\Phi(\rho^*_\tau) =
\psi_\tau$ so that $\nabla \psi_\tau = \nabla \Phi (\rho^*_\tau) >
E\vee 0$ because $\Phi (\rho^*_\tau)$ belongs to $\mc F_E$. By
continuity, there exists $\delta >0$ such that $\nabla \psi_t > E\vee
0$ for $\tau\le t < \tau +\delta$. This contradicts the definition of
$\tau$.
\end{proof}

Fix a density profile $\gamma:[-1,1]\to [0,1]$, a time $T>0$ and
consider the solution $\rho^*$ to \eqref{adhr*}. Let $\lambda_t(\cdot)
= \rho^*_{T-t}(\cdot)$. Clearly, $\lambda$ is the solution to
\eqref{f07bis} in the time interval $[0,T]$ with $K=0$ and initial
condition $\lambda_0 = \rho^*_T$. In particular, by \eqref{I=}, 
\begin{equation*}
I_T(\lambda|\rho^*_T) \;=\;  S_E(\gamma) \;-\;  S_E(\rho^*_T) \;.
\end{equation*}
In the next lemma we prove that $\rho^*_T$ converges to $\upbar\rho_E$
as $T\to\infty$. Letting $T\uparrow\infty$ in the previous formula, we
see that the time reversed trajectory of \eqref{adhr*} is the natural
candidate to solve the variational formula defining the
quasi-potential. This argument is made rigorous in the next
paragraphs.

By standard properties of parabolic equations on a bounded interval,
see e.g.\ \cite{GK}, as $t\to \infty$, the solution to \eqref{eq:1}
converges, in a strong topology, to the unique stationary solution
$\upbar\rho_E$. Such convergence implies that the path $\rho^*$, as
defined in Lemma~\ref{t:cadh}, also converges to $\upbar\rho_E$ as
$t\to\infty$. This is the content of the next lemma. This result will
permit to use the time reversal of $\rho^*$ as a trial path in the
variational problem \eqref{qp}.

\begin{lemma}
\label{t:conv}
Let $E< E_0$, $\gamma\in\mc M_0$, and $\rho^*$ be defined as in
Lemma~\ref{t:cadh}. As $t\to\infty$, the profile $\rho^*_t\in \mc M_0$
converges to $\upbar\rho_E$ in the $C^1([-1,1])$ topology, uniformly
for $\gamma\in \mc M_0$.
\end{lemma}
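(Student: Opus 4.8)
The plan is to transfer the known relaxation of the viscous Burgers flow through the nonlinear map $\Phi^{-1}$ that defines $\rho^*$. Recall from Lemma~\ref{t:cadh} that $\rho^*_t = \Phi^{-1}(\psi_t)$, where $\psi_t = \log[F_t/(1-F_t)]$ and $F\equiv F_t(u)$ solves the hydrodynamic equation \eqref{eq:1} with initial datum $G = e^{\Phi(\gamma)}/[1+e^{\Phi(\gamma)}]$; concretely, $\rho^*_t$ is the right hand side of \eqref{r*}. Since $\upbar\rho_E$ is the unique stationary solution of \eqref{eq:1}, the convergence result quoted from \cite{GK} gives $F_t \to \upbar\rho_E$, hence $\psi_t \to \upbar\varphi_E = \log[\upbar\rho_E/(1-\upbar\rho_E)]$. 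The limit of $\rho^*_t$ is then read off from \eqref{r*} evaluated at $\psi=\upbar\varphi_E$: because $\Phi(\upbar\rho_E)=\upbar\varphi_E$ solves \eqref{Deq}, that right hand side equals exactly $\upbar\rho_E$. Thus the whole task is to upgrade the bare convergence $F_t\to\upbar\rho_E$ to one strong enough, and uniform enough in $\gamma$, to survive the differentiations hidden in \eqref{r*}.

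First I would settle the required regularity. Formula \eqref{r*} expresses $\rho^*_t$ through $\psi_t$, $\nabla\psi_t$ and $\Delta\psi_t$; differentiating once more, $\nabla\rho^*_t$ involves in addition $\nabla^3\psi_t$. Hence $C^1$ convergence of $\rho^*_t$ follows from $C^3$ convergence of $\psi_t$, and, since $\rho_-\le F\le\rho_+$ by the maximum principle so that $\chi(F)$ is bounded away from zero, from $C^3$ convergence of $F_t$. Moreover \eqref{r*} is a continuous function of $(\psi_t,\nabla\psi_t,\Delta\psi_t,\nabla^3\psi_t)$ only where the denominator $\nabla\psi_t(\nabla\psi_t-E)$ stays bounded away from $0$; this holds for all large $t$ because $\nabla\psi_t\to\nabla\upbar\varphi_E$ and $\upbar\varphi_E\in\mc F_E$ forces $\nabla\upbar\varphi_E>0\lor E$ on the compact interval $[-1,1]$. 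It therefore suffices to prove that $F_t\to\upbar\rho_E$ in $C^3([-1,1])$ uniformly over $\gamma\in\mc M_0$, and then to invoke continuity of the map \eqref{r*} on the resulting neighbourhood of $\upbar\varphi_E$.

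Next I would establish this uniform $C^3$ relaxation. The initial data form a good class: as $\gamma$ ranges over $\mc M_0$, the profiles $\Phi(\gamma)$ are uniformly bounded in $C^{1,1}$ by \eqref{bk1}, \eqref{bk2}, \eqref{sf'}, so the corresponding $G=e^{\Phi(\gamma)}/[1+e^{\Phi(\gamma)}]$ lie in a set which is precompact in $C^1([-1,1])$, monotone, and takes values in $[\rho_-,\rho_+]$. Parabolic smoothing for \eqref{eq:1} then shows that at a fixed time, say $t=1$, the family $\{F_1\}$ lies in a compact subset of $C^3([-1,1])$ bounded away from $0$ and $1$, with bounds depending only on $\rho_\pm$ and $E$. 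Relaxing from that time, the strong-topology convergence to the unique stationary solution $\upbar\rho_E$ applies uniformly on this compact set, yielding $\sup_{\gamma\in\mc M_0}\|F_t-\upbar\rho_E\|_{C^3}\to 0$. Composing with $\log[\cdot/(1-\cdot)]$ gives the same for $\psi_t\to\upbar\varphi_E$, and the preceding paragraph concludes.

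The main obstacle is precisely this uniformity in $\gamma$ coupled with the gain of three space derivatives: for a single initial datum one needs only the convergence of \eqref{eq:1} plus continuity of \eqref{r*}, whereas here the relaxation rate and the higher-order estimates must be made independent of $\gamma\in\mc M_0$. I would handle it by the two-step scheme above — parabolic regularization on $[0,1]$ to reach a $\gamma$-independent compact set of smooth profiles, followed by a uniform relaxation estimate on that set — rather than by tracking $\gamma$-dependent constants. Should \cite{GK} furnish only convergence for fixed data, the uniformity follows from a compactness argument on the forward orbit: any failure would produce sequences $t_n\to\infty$ and $\gamma_n$ with a subsequential limit of initial data, contradicting continuous dependence on the datum together with the relaxation to $\upbar\rho_E$.
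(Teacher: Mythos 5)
Your proposal follows essentially the same route as the paper: invoke the relaxation of the viscous Burgers flow to $\upbar\rho_E$ (the paper cites \cite[Theorem~4.9]{GK} for uniform $C^1$ convergence and upgrades it to $C^3$ by the same methods), then transfer this through formula \eqref{r*}, using $\Phi(\upbar\rho_E)=\upbar\varphi_E$ to identify the limit. Your additional remarks on why $C^3$ control of $\psi_t$ is the right requirement, on the denominator $\nabla\psi_t(\nabla\psi_t-E)$ staying away from zero, and on obtaining uniformity in $\gamma$ via parabolic smoothing plus compactness are correct elaborations of steps the paper leaves implicit.
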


\begin{proof}
Recall the notation introduced just before Theorem \ref{t:cadh}. Let
$\rho$ be the solution to \eqref{eq:1}.  In \cite[Theorem~4.9]{GK} it
is shown that, as $t\to\infty$, the profile $\rho_t$ converges to
$\upbar\rho_E$ in the $C^1([-1,1])$ topology, uniformly for $\gamma\in
\mc M_0$. By the methods there developed, it is however
straightforward to prove this statement in the $C^3([-1,1])$ topology.
In particular, $F_t$ converges to $\upbar\rho_E$ in the $C^3([-1,1])$
topology to $\upbar\rho_E$ so that $\psi_t$ converges to
$\log[\upbar\rho_E/(1+\upbar\rho_E)]=\upbar\varphi_E$ in the
$C^3([-1,1])$ topology uniformly in $\gamma\in \mc M_0$. Since
$\Phi(\upbar\rho_E)=\upbar\varphi_E$, the statement now follows from
\eqref{r*}.
\end{proof}

We next show that profiles close to $\upbar\rho_E$ in a strong
topology can be reached with a small cost. 

\begin{lemma}
\label{t:join}
Let $E< E_0$ and $\delta\in (0,1)$. Then, there exist $T>0$ and
constant $C=C(E,\rho_\pm,\delta)>0$ such that the following hold.  For
each $\rho\in C^1([-1,1])$ satisfying $\rho(\pm 1)=\rho_\pm$ and
$\delta \le \rho \le 1-\delta$, there exists a path $\hat\pi\in
D\big([0,T];\mc M\big)$ such that $\hat\pi_T=\rho$ and
\begin{equation*}
I_T(\hat\pi|\upbar\rho_E) \le C \, 
\big\|\rho -\upbar\rho_E\big\|_{C^1}^2\;.
\end{equation*}
\end{lemma}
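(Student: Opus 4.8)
The plan is to connect $\upbar\rho_E$ to $\rho$ by the simplest possible trajectory, the linear time‑interpolation, and to bound its cost directly from the variational definition of the rate function. Fix $T>0$ (to be chosen at the end; $T=1$ will do) and set
\[
\hat\pi_t := \upbar\rho_E + \frac tT\,(\rho - \upbar\rho_E),\qquad t\in[0,T],
\]
writing $g := \rho-\upbar\rho_E$ and $g_t := (t/T)g$. First I would record the elementary properties of this path. Since $\rho_-\le\upbar\rho_E\le\rho_+$ and $\delta\le\rho\le 1-\delta$, each $\hat\pi_t$ is a convex combination of profiles taking values in $[m,M]$, where $m:=\min\{\rho_-,\delta\}>0$ and $M:=\max\{\rho_+,1-\delta\}<1$; hence $\hat\pi_t\in\mc M$ and $\chi(\hat\pi_t)\ge c_0:=m(1-m)\wedge M(1-M)>0$ uniformly in $t$ and in $\rho$. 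Moreover $\nabla\hat\pi_t$ is bounded, so $\mc Q(\hat\pi)<\infty$ and therefore $I_T(\hat\pi|\upbar\rho_E)=\hat I_T(\hat\pi|\upbar\rho_E)=\sup_H\hat J_{T,H,\upbar\rho_E}(\hat\pi)$, with $\hat\pi_0=\upbar\rho_E$, $\hat\pi_T=\rho$ and $\hat\pi_t(\pm1)=\rho_\pm$ as required.

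The essential point is that $\rho$, hence $\hat\pi_t$, is only $C^1$, so $\Delta\hat\pi_t$ need not exist and the argument must avoid second spatial derivatives. I would therefore work throughout with $\hat J_H$, integrating by parts only once in space. For $H\in C^{1,2}_0(\overline{\Omega_T})$, using $\hat\pi_0=\upbar\rho_E$ and integrating by parts once in time and once in space (the boundary terms cancelling against the $\tfrac{\rho_\pm}2\nabla H_t(\pm1)$ contributions precisely because $\hat\pi_t(\pm1)=\rho_\pm$), one rewrites
\[
\hat J_{T,H,\upbar\rho_E}(\hat\pi)=\int_0^T\!dt\,\Big\{\langle\partial_t\hat\pi_t,H_t\rangle+\Big\langle\tfrac12\nabla\hat\pi_t-\tfrac E2\chi(\hat\pi_t),\,\nabla H_t\Big\rangle-\tfrac12\big\langle\chi(\hat\pi_t),(\nabla H_t)^2\big\rangle\Big\}.
\]
Here I would invoke the stationarity of $\upbar\rho_E$: by \eqref{ss} the current $\tfrac12\nabla\upbar\rho_E-\tfrac E2\chi(\upbar\rho_E)\equiv-J_E/2$ is constant, and its contribution $-\tfrac{J_E}2\int_{-1}^1\nabla H_t\,du$ vanishes since $H_t(\pm1)=0$. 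Thus the middle term reduces to $\langle b_t,\nabla H_t\rangle$ with $b_t:=\tfrac12\nabla g_t-\tfrac E2[\chi(\hat\pi_t)-\chi(\upbar\rho_E)]$, and since $\chi$ is Lipschitz on $[m,M]$ one obtains the key estimate $\|b_t\|_{L^2}\le C(E,\rho_\pm,\delta)\,\|g\|_{C^1}$.

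Finally I would bound the supremum over $H$ pointwise in time. At fixed $t$ the functional $H\mapsto\langle\partial_t\hat\pi_t,H\rangle+\langle b_t,\nabla H\rangle-\tfrac12\langle\chi(\hat\pi_t),(\nabla H)^2\rangle$ is concave; representing $\langle\partial_t\hat\pi_t,H\rangle=\langle\nabla U_t,\nabla H\rangle$ through the auxiliary problem $-\Delta U_t=\partial_t\hat\pi_t$, $U_t(\pm1)=0$, and maximizing over $\nabla H$ pointwise while discarding the constraint $\int\nabla H=0$, yields
\[
\sup_H\hat J_{T,H,\upbar\rho_E}(\hat\pi)\le\int_0^T\!dt\;\frac1{2c_0}\,\|\nabla U_t+b_t\|_{L^2}^2\le\frac1{c_0}\int_0^T\!dt\,\big[\|\nabla U_t\|_{L^2}^2+\|b_t\|_{L^2}^2\big].
\]
By Poincaré, $\|\nabla U_t\|_{L^2}\le C_P\|\partial_t\hat\pi_t\|_{L^2}=C_P\|g\|_{L^2}/T\le C_P'\|g\|_{C^1}/T$, while $\|b_t\|_{L^2}\le C\|g\|_{C^1}$ from the previous step. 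Inserting these and integrating in $t$ gives $I_T(\hat\pi|\upbar\rho_E)\le\frac1{c_0}\big(C_P'^2/T+C^2T\big)\,\|g\|_{C^1}^2$; fixing $T=1$ (or optimizing in $T$) produces the claimed bound with $C=C(E,\rho_\pm,\delta)$.

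The main obstacle, and the reason for phrasing everything dually, is exactly the low regularity of the target $\rho$: one cannot use the strong‑form cost formula $\tfrac12\|H\|_{1,\chi(\pi)}^2$ of Lemma~\ref{g07}, which presupposes $\pi\in\mc D_0$ and hence $C^\infty$ smoothness. The estimate must instead be extracted from $\sup_H\hat J_H$ using a single integration by parts and the $L^2\!\to\!H^{-1}$ duality, the constant stationary current of $\upbar\rho_E$ being precisely what makes the boundary terms and the terms linear in $H$ disappear, leaving a purely quadratic expression controllable by $\|\rho-\upbar\rho_E\|_{C^1}^2$.
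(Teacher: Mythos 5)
Your proposal is correct and follows exactly the path the paper uses: the linear interpolation $\hat\pi_t=\upbar\rho_E+t(\rho-\upbar\rho_E)$ with $T=1$, whose cost the paper dismisses as ``simple computations'' and which you verify in detail (correctly exploiting the constant stationary current to kill the terms linear in $H$ and bounding the remaining quadratic form by duality). No discrepancy with the paper's argument.
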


\begin{proof}
Simple computations show that $T=1$ and the straight path $\hat\pi_t =
\upbar\rho_E +t (\rho-\upbar\rho_E)$ meet the requirements. For $E=0$,
in \cite[Lemma~5.7]{bdgjl4} a more clever path is chosen which yields
a bound in terms of the $L_2$ norm of $\rho-\upbar\rho_E$.
\end{proof}

We can now conclude the proof of Theorem~\ref{t:S=V}.

\begin{proof}[Proof of Theorem~\ref{t:S=V}: the inequality $V_E \leq S_E$]
Given $\rho\in \mc M$ and $\delta>0$ we need to find $T>0$ and a path
$\pi^*\in D\big([0,T];\mc M\big)$ such that $\pi_T^*=\rho$ and
$I_T(\pi^*|\upbar\rho_E) \le S_E(\rho) +\delta$. By
Lemma~\ref{t:Sden}, there exists a sequence $\{\rho^n\}\subset\mc M_0$
converging to $\rho$ in $\mc M$ and such that $S_E(\rho^n) \to
S_E(\rho)$. Let $\rho^{*,n}$ be the path constructed in
Lemma~\ref{t:cadh} with $\gamma$ replaced by $\rho^n$ and pick
$\epsilon>0$ to be chosen later. By Lemma~\ref{t:conv}, there exists a
time $T_1=T_1(\epsilon)>0$ independent of $n$ such that
$\|\rho^{*,n}_{T_1} -\upbar\rho_E\|_{C^1} \le \epsilon$. Whence, by
Lemma~\ref{t:join}, there exists a time $T_2>0$, still independent of
$n$, and a path $\hat\pi^n_t$, $t\in[0,T_2]$ such that $\hat\pi^n_{0}
=\upbar\rho_E$, $\hat\pi^n_{T_2}=\rho^{*,n}_{T_1}$ and
$I_{T_2}(\hat\pi^n|\upbar\rho_E) \le \beta_\epsilon$, where
$\beta_\epsilon$ vanishes as $\epsilon \to 0$ and is independent of
$n$. We now set $T:=T_1+T_2$ and let $\pi^{*,n}_t$, $t\in[0,T]$ be the
path defined by
\begin{equation*}
\pi^{*,n}_t:=
\begin{cases}
\hat\pi^n_t & t\in [0,T_2] \\
\rho^{*,n}_{T-t} & t\in (T_2,T] 
\end{cases}
\end{equation*}
which satisfies $\pi^{*,n}_0=\upbar\rho_E$ and $\pi^{*,n}_T=\rho^n$.
The covariance of $I$ w.r.t.\ time shifts, Lemmata~\ref{I=I*+S} and
\ref{t:cadh} yield
\begin{eqnarray}
\label{ubn}
\nonumber
I_T(\pi^{*,n}|\upbar\rho_E) 
& = & I_{T_2}(\hat\pi^{n}|\upbar\rho_E) 
+ I_{T_1}(\rho^{*,n}_{T_1-\cdot}|\rho^{*,n}_{T_1} ) \\
&\le& \beta_\epsilon + S_E(\rho^n) - S_E(\rho^{*,n}_{T_1})
\; \le\;  \beta_\epsilon + S_E(\rho^n) \;.
\end{eqnarray}
Since $S_E(\rho^n)\to S_E(\rho)< \infty$ and $I_T(\cdot|\upbar\rho_E)$
has compact level sets, see Theorem~\ref{s02}, the bound \eqref{ubn}
implies precompactness of the sequence $\{\pi^{*,n}\}\subset
D\big([0,T];\mc M\big)$. It therefore exists a path $\pi^{*}$ and a
subsequence $n_j$ such that $\pi^{*,n_j}\to \pi^{*}$ in
$D\big([0,T];\mc M\big)$. In particular $\pi^*_T =\lim_{j}
\pi^{*,n_j}_T=\lim_{j} \rho^{n_j}=\rho$. The lower semicontinuity of
$I_T(\cdot|\upbar\rho_E)$ and \eqref{ubn} now yield
\begin{equation*}
I_T(\pi^{*}|\upbar\rho_E) 
\;\le\; \varliminf_{j\to\infty} I_T(\pi^{*,n_j}|\upbar\rho_E) 
\;\le\; \beta_\epsilon + \lim_{j\to\infty} S_E(\rho^{n_j})
\;=\; \beta_\epsilon + S_E(\rho)
\end{equation*}
which, by choosing $\epsilon$ so that $\beta_\epsilon \le \delta$,
concludes the proof.
\end{proof}

\section{The asymmetric limit}
\label{s:asyl}

In this section we discuss the asymmetric limit $E\to -\infty$ and
prove Theorems~\ref{t:gconv} and \ref{t:cof}. 

\begin{proof}[Proof of Theorem~\ref{t:gconv}: $\Gamma$-liminf
  inequality]
Fix $\rho\in\mc M$ and a sequence $\{\rho_E\}\subset \mc M$ converging
to $\rho$ in $\mc M$ as $E\to -\infty$. We need to prove that
$\varliminf_E S_{E}(\rho_E) \ge S_{\asy}(\rho)$.

Let $J_E$ be such that \eqref{css} holds; it is straightforward to
check that
\begin{equation*}
\lim_{E\to -\infty} \frac{J_E}{E} = \max_{r\in[\rho_-,\rho_+]}
\chi(r)\;, 
\end{equation*}
whence, recalling that $A_E$ has been defined in \eqref{cge} and
$A_\asy$ in \eqref{cga},
\begin{equation}
\label{convae}
\lim_{E\to -\infty} \big[ A_E - \log (-E) \big] = 
\max_{r\in[\rho_-,\rho_+]} \log \chi(r) = A_\asy \;.
\end{equation}
Fix $\varphi\in C^{1+1}([-1,1])$ such that $\varphi(\pm
1)=\varphi_\pm$ and $\varphi' >0$. From \eqref{convae} it easily
follows that
\begin{equation}
\label{pezzo}
\lim_{E\to -\infty} \; \int_{-1}^1\!du \: \Big\{ 
\frac 1E \Big[ \varphi' \log \varphi' - (\varphi' - E) \log
(\varphi'  - E) \Big] - (A_E -A_\asy)  \Big\} = 0\;.
\end{equation}
Recalling \eqref{ssg}, \eqref{Grf} and \eqref{Grf-in}, from the
convexity of the real function $F: [0,1] \to \bb R$,
$F(\rho)= \rho \log \rho + (1-\rho)\log(1-\rho)$ and
\eqref{pezzo} we get
\begin{equation*}
\varliminf_{E\to -\infty} S_E(\rho_E) \ge 
\varliminf_{E\to -\infty} \mc G_E(\rho_E,\varphi) 
\ge \mc G_\asy (\rho,\varphi)\;.
\end{equation*}
The proof of the $\Gamma$-liminf inequality is now completed by
optimizing on $\varphi$. Note indeed that the supremum in \eqref{s-in}
can be restricted to strictly increasing $\varphi\in C^{1+1}([-1,1])$
such that $\varphi(\pm 1)=\varphi_\pm$.
\end{proof}

\begin{proof}[Proof of Theorem~\ref{t:gconv}: $\Gamma$-limsup
  inequality]
  
Fix $\rho\in\mc M$, we need to exhibit a sequence $\{\rho_E\}\subset
\mc M$ converging to $\rho$ in $\mc M$ as $E\to -\infty$ such that
$\varlimsup_E S_{E}(\rho_E) \le S_{\asy}(\rho)$. We claim that the
constant sequence $\rho_E=\rho$ meets this condition.

Recalling item (ii) in Theorem~\ref{t:S=V}, let $\phi_E:=
\Phi(\rho)\in \mc F_E$ be the solution to \eqref{Deq} in which we
indicated explicitly its dependence on $E$. From the concavity of the
real function $F: [0,\infty) \to \bb R$, $F(x)= E^{-1}\big[ x\log x
-(x-E)\log(x-E)\big]$, $E<0$, Jensen inequality, and \eqref{convae} we
deduce
\begin{equation}
\label{pezzo2}
\varlimsup_{E\to -\infty} \int_{-1}^1\!du\, 
\Big\{ \frac 1E \Big[ \phi_E' \log \phi_E' 
- (\phi_E' - E) \log (\phi_E'  - E) \Big] - (A_E -A_\asy) \Big\} 
\le 0 \;.
\end{equation}
Since $\mc F_E\subset \mc F_\asy$ and $\overline{\mc F}_\asy$ is
compact, the sequence $\{\phi_E\}$ is precompact in $\overline {\mc
  F_\asy}$. Let now $\phi^*\in \overline{\mc F}_\asy$ be any limit
point of $\{\phi_E\}$ and pick a subsequence $E'\to -\infty$ such that
$\phi_{E'}\to \phi^*$ in $\mc F_\asy$. In particular $\phi_{E'}(u)\to
\phi^*(u)$ Lebesgue a.e.  Recalling Theorem~\ref{t:S=V} (ii),
\eqref{Grf}, \eqref{Grf-in}, and using \eqref{pezzo2} we get that
\begin{equation*}
\varlimsup_{E'\to -\infty} S_{E'}(\rho) = 
\varlimsup_{E'\to -\infty} \mc G_{E'}(\rho,\phi_{E'}) 
\le \mc G_\asy (\rho,\phi^*)  \le S_\asy(\rho)\;,
\end{equation*}
which concludes the proof.
\end{proof}

\begin{proof}[Proof of Theorem~\ref{t:cof}]
Existence of a maximizer for \eqref{s-in} follows from the compactness
of $\overline{\mc F}_\asy$ and from the continuity of $\mc
G_\asy(\rho,\cdot)$ for the topology of $\overline{\mc F}_\asy$.  On
the other hand, the strict concavity of the function $F:
[\varphi_-,\varphi_+] \to \bb R_+$, $F(\varphi) = -
\log(1+e^\varphi)$, gives the uniqueness of the maximizer.

The proof of the convergence of the maximizers follows a variational
approach. Given $\rho\in\mc M$ and $E< 0$ we define $\upbar{\mc
  G}_E(\rho,\cdot): \overline{\mc F}_\asy \to [-\infty,+\infty)$ by
\begin{equation*}
\upbar{\mc G}_E(\rho,\varphi) :=
\begin{cases}
\mc G_E(\rho,\varphi) & \textrm{ if $\varphi\in\mc F_E$} \\
-\infty    & \textrm{ otherwise.}
\end{cases}
\end{equation*}
By \cite[Theorem 1.21]{Braides}, with all inequalities reversed since
we focus on maximizers instead of minimizers, the convergence of the
sequence $\{\phi_E\}$ to $\phi$ in $\overline{\mc F}_\asy$ follows
from the next three conditions. Fix $\rho\in\mc M$ and
$\varphi\in\overline{\mc F}_\asy$ then:
\begin{itemize}
\item[(i)]{for any sequence $\varphi_E\to\varphi$ in $\overline {\mc
      F_\asy}$, $\varlimsup_E \upbar{\mc G}_E(\rho,\varphi_E) \le {\mc
      G}_\asy(\rho,\varphi)$;}
\item[(ii)]{there exists a sequence $\varphi_E\to\varphi$ in
    $\overline{\mc F}_\asy$ such that $\varliminf_E \upbar{\mc
      G}_E(\rho,\varphi_E) \ge {\mc G}_\asy(\rho,\varphi)$;}
\item[(iii)]{$\phi$ is the unique maximizer for the functional ${\mc
      G}_\asy(\rho,\cdot)$ on $\overline{\mc F}_\asy$.}
\end{itemize}

\smallskip
\noindent\emph{Proof of (i).} We may assume that $\varphi_E\in \mc
F_E$; the proof of (i) is then achieved by noticing that
\eqref{pezzo2} holds also if $\phi_E$ is replaced by $\varphi_E$. 

\noindent\emph{Proof of (ii).} 
Assume firstly that $\varphi$ belongs to $C^1([-1,1])$ and satisfies
$\varphi(\pm 1)=\varphi_\pm$, $\varphi' > 0$. Since \eqref{pezzo}
holds for such $\varphi$, it is enough to take the constant sequence
$\varphi_E=\varphi$. The proof of (ii) is completed by a density
argument, see e.g.\ \cite[Rem.~1.29]{Braides}. More precisely, it is
enough to show that for each $\varphi\in\overline{\mc F}_\asy$ there
exists a sequence $\varphi^n\in C^1([-1,1])$ satisfying $\varphi^n(\pm
1)=\varphi_\pm$, $(\varphi^n)' > 0$ and such that
$\varphi^n\to\varphi$ in $\overline{\mc F}_\asy$, $\mc G_\asy
(\rho,\varphi^n) \to \mc G_\asy (\rho,\varphi)$. This is implied by
classical results on the approximation of $BV$ functions by smooth
ones.

As we have already shown (iii), the proof is completed. 
\end{proof}

\bigskip
\subsection*{Acknowledgments}

The results within this paper are a natural development of our
collaboration with A.\ De Sole and G. Jona-Lasinio to whom we are 
in a great debt. L.B.\ acknowledges the kind hospitality at IMPA and
the support of PRIN MIUR.

\end{document}